\numberwithin{equation}{section}
\newverbcommand{\CMRverb}{\tiny\color{blue}}{}
\newverbcommand{\GRverb}{\tiny\color{teal}}{}
\newverbcommand{\STverb}{\tiny\color{cyan}}{}
\newverbcommand{\Pverb}{\tiny\color{violet}}{}
\newverbcommand{\Averb}{\tiny\color{brown}}{}
\theoremstyle{plain}
\newtheorem{THM}{Theorem}[section]
\newtheorem{PROP}[THM]{Proposition}
\newtheorem{LEM}[THM]{Lemma}
\newtheorem{COR}[THM]{Corollary}
\theoremstyle{definition}
\newtheorem{DEF}[THM]{Definition}
\newtheorem{RMK}[THM]{Remark}
\newcommand{\e}{\epsilon}
\renewcommand{\bar}{\overline}
\renewcommand{\|}{|\!|} %For norms
\renewcommand{\r}{q_{n_k+1}}
\newcommand{\x}{(x,s_1)}
\newcommand{\y}{(y,s_2)}
\newcommand{\h}{\mathfrak{h}}
\newcommand{\g}{\mathfrak{g}}
\newcommand{\s}{\tilde{s}}
\newcommand{\N}{\tilde{N}}
\title{Special flow systems with the minimal
 self-joining property}
\author[Zhai]{Yibo Zhai}
\address{DEPARTMENT OF MATHEMATICS, UNIVERSITY OF UTAH}
\email{zhai@math.utah.edu}
\date{\today}
\begin{document}

\begin{abstract}
Arnol’d flows are a class of area-preserving flows on surfaces. In this
paper, we prove that typical Arnol’d flows
have the minimal self-joining property. Consequently, we can classify centralizers
and factors of typical Arnol’d flows.
\end{abstract}
\maketitle
\section{\textbf{Introduction}}
The study of joinings is an important branch of ergodic theory with many interesting connections with other fields.  In \cite{Furstenberg1967}, Furstenberg introduced the concepts of joinings, where he observed deep analogies between the arithmetic of integers and the classification of measure-preserving dynamical systems: we can consider products and factors of systems. 
\begin{DEF}
    A \textit{joining} of two measure-preserving systems \((X, T,\mu)\) and \((Y, S,\nu)\) means a probability measure \(m\) on \(X\times Y\) such that \(m\) is \(T\times S\) invariant and the projections of \(m\) onto \(X\) and \(Y\) are \(\mu\) and \(\nu\) respectively.
\end{DEF}

\noindent It turns out that we can use the classification of joinings to study a factor of the system. This idea is the starting point for studying joinings. Furstenberg showed in \cite{Furstenberg1967} that if two systems \((X, T,\mu)\) and \((Y, S,\nu)\) are disjoint, which means the only joining is \(\mu\otimes \nu, \)  then they don't share common non-trivial factors. In the 1980s, Ratner applied the study of joinings to classify factors of horocycle flows on the unit tangent bundle of a surface of constant negative curvature with finite volume in her celebrated seminal work \cite{Ratner}. In this paper, Ratner introduced an important property called \textit{Ratner's property}, which is central to studying parabolic flows. The Ratner property shows a deep connection between joinings and orbits of nearby points: if the divergence of the distance between nearby points grows but grows slowly under the dynamics of flows, then an ergodic joining of the flows is either finite-to-one on \(a.e.\) fiber or the product measure. The existence of a Ratner-type property in non-homogeneous systems was first observed by Fr\k{a}czek and Lema$\acute{\text{n}}$czyk in \cite{Mildmixing}. Interestingly, some celebrated conjectures can be verified, such as Rokhlin's multiple-mixing conjecture and Sarnak's conjecture, by studying joinings. In \cite{Multiple}, Fayad and Kanigowski proved that a class of surface flows is mixing of all orders by proving \(\textit{Switchable weak Ratner's property}.\) Later, Kanigowski, Lema$\acute{\text{n}}$czyk, and Ulcigrai showed the M$\ddot{\text{o}}$bius orthogonality of time-changes of horocycle flows and Arnol'd flows by proving the disjointness criteria in \cite{Disjoint}. 
It is natural to ask the following question: if two systems only have trivial common factors, are those two systems disjoint? The answer is negative, and a counterexample was constructed by Rudolph in \cite{Rudolph}. In this paper, Rudolph introduced another important property, called the \(\textit{minimal self-joining property}\). For simplicity, we define the \textit{2-fold minimal self-joining property}  here. 
\begin{DEF}
\label{MSJ for action}
We say that a measure-preserving flow system \((X,(T_t)_{t\in \mathbb{R}},\mu)\) has the \textit{2-fold minimal self-joining property }if the only ergodic self-joinings of \((X,(T_t)_{t\in \mathbb{R}},\mu)\) are the product measure and the image of measure \(\mu\) via map \(x\mapsto (x, T_{t_0} x)\) where \(t_0\in\mathbb{R}.\) 
\end{DEF}
For a weakly mixing flow system, the 2-fold minimal self-joining property is equivalent to the minimal self-joining property. It is still unknown whether 2-fold minimal self-joining implies minimal self-joining for measure-preserving transformations. For definitions of the \(k\)-fold minimal self-joining property and the minimal self-joining property, see \Cref{Self-joining}. The minimal self-joining property is a rare property in ergodic theory. The existence of the minimal self-joining property is only known for a few systems. In \cite{DelJunco1980}, Junco, Rahe, and Swanson proved Chacon’s automorphism has the minimal self-joining property.
In \cite{Kingrank1}, King proved all mixing rank one systems have the minimal self-joining property. In \cite{IETMSJ}, Ferenczi, Holton, and Zamboni proved the minimal self-joining property for linearly recurrent three-interval exchange transformations. In \cite{MSJFL}, Fr\k{a}czek and Lema$\acute{\text{n}}$czyk proved the minimal self-joining property for a class of special flows when the rotation angle has bounded quotients. In the context of infinite ergodic theory, Klimov and Ryzhikov proved the minimal self-joining property for mixing rank one systems in \cite{MSJR1}. Regarding the classification of factor systems, the minimal self-joining property of a system implies that the system is \textit{prime}. A system is called prime
if it admits no proper factor systems. Due to the rareness of the minimal self-joining property, Veech introduced a weaker but important property called \(\textit{2-simple}\) in \cite{Veech} and gave a criterion for a system to be prime when the system is 2-simple. Typical systems are not 2-simple, which is proved in \cite{AGEEV_2003}. In particular, typical systems don't have the minimal self-joining property. Many systems are prime but don't have the minimal self-joining property, such as some horocycle flows in \cite{Ratner} and rank one systems in \cite{Primesystem}. The absence of the 2-simple property of three interval-exchange transformations is proved in \cite{Self3IET}. Recently, a mild mixing rank one system without the minimal self-joining property was constructed in \cite{rankmildmixingminimal}. The survey \cite{ deLaRue2020} is a good introduction to the study of joining.  For other examples related to minimal self-joining, 2-simple, and prime, see \cite{UD1992,Blanchard1995,Blanchard1998,DA2007,SY2014,parreau2015prime,InfiniteMSJ2018}. 

The problem we are concerned about in this paper is the self-joining property of a class of smooth flows on the 2-dimensional torus. These flows can be represented as special flows built over rotations.
\begin{DEF}
     Let \(R\) be an automorphism of a standard probability space \((X,\mathcal{B},\mu).\) Let \(f\in L_{+}^1(X,\mu)\). The special flow \((T_t^f)_{t\in \mathbb{R}}\) built over \((X,R,\mu)\) and under the ceiling function \(f\) is given by
\begin{equation}
\notag
\begin{aligned}
    X\times\mathbb{R}/\sim &\rightarrow X\times \mathbb{R}/\sim\\
        (x,s)&\rightarrow(x,s+t),
\end{aligned}
\end{equation}
where \(\sim\) is the identification 
\[(x,s+f(x))\sim (R(x),s).\]
\end{DEF}
\noindent The study of special flows dates back to von Neumann's work in \cite{Neumann1932}, where he proved the weakly mixing property when \(R\) is the circle rotation and \(f\) is piecewise absolutely continuous with the sum of jumps not equal to 0. Kochergin proved that this class of flows is never mixing in \cite{Nonmixing}. Later, Arnol'd constructed a class of special flows with asymmetric logarithm singularities in \cite{Arnold}, and Sinai and Khanin proved the Arnol'd flow is mixing in \cite{Sinai}. For the definition of Arnol'd flows, see \Cref{Special flows with singularities}. The main result of this paper is to prove the following theorem:
\begin{THM}
\label{Main}
For an explicit full Lebesgue measure set of \(\alpha\in(0,1)\), Arnol'd flows have the minimal self-joining property.
\end{THM}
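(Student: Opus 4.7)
The plan is to combine the Switchable Ratner Property established for Arnol'd flows by Fayad and Kanigowski in \cite{Multiple} with a quantitative Diophantine analysis of $\alpha$, in the spirit of Fr\k{a}czek and Lema\'nczyk's treatment of the bounded partial quotients case in \cite{MSJFL}, but pushed through for typical rotation numbers where uniform Denjoy--Koksma estimates are no longer available.

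\textbf{Step 1 (the Diophantine set).} I would specify an explicit full Lebesgue measure set $\mathcal{A}\subset(0,1)$ by a conjunction of conditions on the continued fraction expansion of $\alpha$: a mild almost-sure polynomial control on the partial quotients, together with the existence of a subsequence of denominators $q_{n_k}$ along which $\|q_{n_k}\alpha\|$ and the ratios $q_{n_k+1}/q_{n_k}$ satisfy compatible quantitative estimates. These estimates must simultaneously (i) feed the switchable Ratner machinery, and (ii) provide a good rigidity sequence for the base rotation. Khinchin's theorem and standard metric results for the Gauss map make the intersection of these conditions a set of full measure.

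\textbf{Step 2 (classification up to a finite extension).} For $\alpha\in\mathcal{A}$, the switchable Ratner property holds for the Arnol'd flow $(T_t^f)$. Applying the now-standard Ratner-type dichotomy as in \cite{Mildmixing, Multiple}, any ergodic self-joining $\lambda$ of $(T_t^f)$ is either the product measure $\mu^f\otimes\mu^f$, or a finite extension of each of its marginals, supported on finitely many graphs of measurable maps that commute with the $\R$-action.

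\textbf{Step 3 (eliminating nontrivial finite extensions).} Suppose $\lambda$ is a nontrivial finite extension. I would exploit the rigidity sequence $q_{n_k}$: on the base rotation, $R^{q_{n_k}}$ is almost the identity, and the Birkhoff sums $S_{q_{n_k}}f$ of the asymmetric logarithmic ceiling have an explicit, non-Gaussian limiting distributional behavior driven by the close returns to the singularity, in the spirit of the Sinai--Khanin analysis \cite{Sinai}. Using this, together with equivariance of the fiber-bijection defining $\lambda$, I would force the bijection to commute with every $T_t^f$; weak mixing of the Arnol'd flow then identifies it with a time map $T_{t_0}^f$, so $\lambda$ is off-diagonal and 2-fold MSJ follows.

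\textbf{Main obstacle.} The delicate point is Step 3. In the bounded partial quotients regime of \cite{MSJFL}, uniform Denjoy--Koksma bounds control $f$-Birkhoff sums along every $q_n$, whereas for typical $\alpha$ the contribution of a single close visit to the logarithmic singularity can dominate and the uniform estimates fail. One must therefore work along carefully selected subsequences $q_{n_k}$, quantify precisely the distribution of the atypical terms, and show that the resulting collection of Diophantine conditions both admits a full Lebesgue measure set and is strong enough to rule out every nontrivial fiber-bijection. Engineering this set, and carrying out the sharp Birkhoff sum analysis on it, is the technical heart of the proof.
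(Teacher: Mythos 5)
Your Steps 1 and 2 match the paper's setup (a full-measure Diophantine class feeding the switchable Ratner dichotomy of \cite{Multiple}), but Step 3 contains two genuine gaps. First, the dichotomy does \emph{not} give you finitely many graphs of maps that individually commute with the flow: if $\nu$ is an $\h$-to-one extension with fiber maps $\psi_1,\dots,\psi_{\h}$, only the \emph{set} $\{\psi_j(x,s)\}$ is equivariant, and in general $\psi_i(T_t^f(x,s))\neq T_t^f\psi_i(x,s)$ because the flow can permute the fiber points. The hardest case is precisely when $T_{t_0}^f\psi_i(x,s)$ returns near a \emph{different} fiber point $\psi_j(x,s)$, and your plan has no mechanism to handle it; the paper needs the abstract Product and Simpleness criteria (\Cref{Product Criteria}, \Cref{Simpleness Criteria}), the scale/variation properties, and a pigeonhole argument combined with the large shearing estimate \Cref{Large Shearing} to rule this out. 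Second, the assertion that once the fiber bijection commutes with every $T_t^f$, ``weak mixing identifies it with a time map'' is false: weak mixing (even mixing) does not imply a trivial centralizer. Reducing to a graph joining only tells you $\nu=\Delta_R$ for some $R\in C(\mathcal{T})$; showing $C(\mathcal{T})=\{T_t^f\}$ is itself a substantial theorem (\Cref{Theorem of centralizer}), proved by a Ratner-type argument in which one compares the return displacement $\|q_r\alpha\|$ of $x$ with the much larger displacement forced on the companion point by the bound $\mathfrak{d}_1\le|f^{(q_r)}(x)-f^{(q_r)}(y)|\le\mathfrak{d}_2\log q_r$, using the condition $q_{n_k+1}\ge q_{n_k}\log q_{n_k}\log\log q_{n_k}$.

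Relatedly, your proposed tool for Step 3 --- a Sinai--Khanin-type distributional limit theorem for $f^{(q_{n_k})}$ --- is not what drives the elimination of nontrivial extensions, and it is unclear how a distributional statement alone would constrain the fiber maps. What the paper actually needs are pointwise shearing estimates valid for pairs $(x,y)$ at \emph{arbitrary} distance, split into the small shearing case (\Cref{Small Shearing}, divergence landing in a fixed compact set $P$ at times $\ell_0 q_m$) and the new large shearing case (\Cref{Large Shearing}, controlled divergence at the specific times $q_{n_k}$ or $q_{n_k+1}$, including pairs straddling the logarithmic singularity), combined with Lusin continuity and a uniform Birkhoff ergodic theorem along the relevant orbit segments. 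Also note the paper's Diophantine condition $\mathcal{D}_2$ is of the form $q_{n+1}<C_\alpha q_n\log^{3/2}q_n$, considerably stronger than the ``polynomial control'' you propose, and this strength is used in the final contradiction. Finally, to pass from $2$-fold minimal self-joinings to full MSJ you need mixing together with \Cref{2Simple}-type reduction, which should be stated explicitly.
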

\noindent The conditions on Arnol'd flows will be given in \Cref{Special flows with singularities}. We have the following corollary of \Cref{Main}:
\begin{COR}
Under assumptions in \Cref{Main}, Arnol'd flows are prime.   
\end{COR}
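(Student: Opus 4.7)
The plan is a direct application of the classical implication ``2-fold minimal self-joinings $\Rightarrow$ primality'' of Rudolph and del Junco, adapted to the flow setting. Let $\pi:(X,\mu,(T_t))\to(Y,\nu,(S_t))$ be a factor map of an Arnol'd flow satisfying the hypotheses of \Cref{Main}, and set $\mathcal{A}=\pi^{-1}(\mathcal{B}_Y)\subset\mathcal{B}_X$. The goal is to show that $\mathcal{A}$ is either $\mu$-trivial or all of $\mathcal{B}_X$ modulo $\mu$.

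First I would form the relatively independent self-joining
\[
\lambda:=\mu\otimes_{\mathcal{A}}\mu\qquad\text{on }X\times X,
\]
which is $(T_t\times T_t)$-invariant with both marginals equal to $\mu$, hence a genuine self-joining. By \Cref{Main}, the only ergodic self-joinings of $(X,\mu,(T_t))$ are the product measure $\mu\otimes\mu$ and the off-diagonal joinings $\Delta_s:=(\mathrm{id}\times T_s)_{*}\mu$, $s\in\mathbb{R}$. Disintegrating $\lambda$ into its ergodic components therefore yields a representation
\[
\lambda=p\,(\mu\otimes\mu)+(1-p)\int_{\mathbb{R}}\Delta_s\,d\rho(s)
\]
for some $p\in[0,1]$ and some Borel probability measure $\rho$ on $\mathbb{R}$.

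Since $\lambda$ is concentrated on the fibered diagonal $R=\{(x_1,x_2):\pi(x_1)=\pi(x_2)\}$, the same is true of $\Delta_s$ for $\rho$-a.e.\ $s$; this translates to $\pi(x)=\pi(T_s x)$ for $\mu$-a.e.\ $x$. Using the intertwining $\pi\circ T_s=S_s\circ\pi$, this forces $S_s=\mathrm{id}_Y$ $\nu$-a.e. Because the Arnol'd flow is weakly mixing (a consequence of the mixing property established in \cite{Sinai}), so is the factor $(Y,\nu,(S_t))$, and a weakly mixing flow admitting a non-trivial period must be the one-point system. Hence either $\rho=\delta_0$, or $Y$ is trivial and the corollary is immediate. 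In the remaining case $\lambda=p\,(\mu\otimes\mu)+(1-p)\Delta_0$: if $p=1$ then $\mu\otimes_{\mathcal{A}}\mu=\mu\otimes\mu$, which forces $\mathcal{A}$ to be trivial and $Y$ a point; if $p<1$ then the disintegration $\mu_y$ is a Dirac mass on a set of positive $\nu$-measure, and by $(S_t)$-invariance together with ergodicity of $(Y,\nu,(S_t))$ this holds for $\nu$-a.e.\ $y$, so $\pi$ is essentially invertible and $\mathcal{A}=\mathcal{B}_X$.

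The main subtlety in executing this plan is justifying the ergodic decomposition of $\lambda$ as an integral of off-diagonals $\Delta_s$ parameterized measurably by $s\in\mathbb{R}$, rather than by an abstract ergodic-component space. This requires knowing that $s\mapsto\Delta_s$ is a measurable injection into the space of self-joinings, which is standard once the flow is aperiodic, and aperiodicity is immediate from weak mixing. Beyond this point, the argument is entirely formal and relies only on \Cref{Main}; no additional dynamical input about Arnol'd flows is needed.
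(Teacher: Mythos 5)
The paper gives no argument for this corollary at all: it is stated as an immediate consequence of \Cref{Main} together with the standard implication ``minimal self-joinings implies prime'' recalled in the introduction. Your proposal supplies exactly that classical del Junco--Rudolph argument, and its skeleton is sound: form the relatively independent joining $\lambda=\mu\otimes_{\mathcal{A}}\mu$ over a candidate factor, decompose it into ergodic self-joinings, use \Cref{Main} to see that only $\mu\otimes\mu$ and the off-diagonals $\Delta_s$ can occur, use the fact that $\lambda$ is carried by the fibered diagonal $R$ together with weak mixing of the factor (available since these flows are mixing by Sinai--Khanin, which the paper already uses) to exclude every $\Delta_s$ with $s\neq 0$, and handle the measurable parameterization of $s\mapsto\Delta_s$ via aperiodicity. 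Up to the last case analysis this is correct and is the route the paper implicitly cites.

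The one step that does not hold as written is the endgame for $p<1$: from $\lambda(D)\geq 1-p>0$ ($D$ the diagonal) you assert that ``the disintegration $\mu_y$ is a Dirac mass on a set of positive $\nu$-measure.'' Positivity of $\lambda(D)=\int_Y\sum_{x}\mu_y(\{x\})^2\,d\nu(y)$ only says that $\mu_y$ has an atom on a positive-measure set of $y$; a priori every fiber could consist of, say, two atoms of mass $1/2$, and then no $\mu_y$ is Dirac. The clean repair uses the identity $1=\lambda(R)=p\,(\mu\otimes\mu)(R)+(1-p)\Delta_0(R)$ once you know $\rho=\delta_0$: if $p>0$ this forces $(\mu\otimes\mu)(R)=1$, i.e.\ $\pi(x_1)=\pi(x_2)$ for $\mu\otimes\mu$-a.e.\ pair, so $\pi$ is a.e.\ constant and $Y$ is trivial; if $p=0$ then $\lambda=\Delta_0$, and $\int_Y\sum_x\mu_y(\{x\})^2\,d\nu(y)=1$ forces $\mu_y$ to be a single point mass for $\nu$-a.e.\ $y$, because $\sum_x\mu_y(\{x\})^2\leq 1$ with equality exactly when $\mu_y$ is Dirac; hence $\pi$ is essentially invertible and $\mathcal{A}=\mathcal{B}_X$ modulo $\mu$. (Equivalently, fibers with at least two atoms would give $\lambda$ positive mass on $R\setminus D$, which neither $\mu\otimes\mu$ nor $\Delta_0$ can carry when $Y$ is nontrivial.) With this correction your proof is complete and coincides with the standard argument the corollary rests on.
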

As far as we know, it was previously unknown if any Arnol'd flows have this property. The main idea is to further classify finite extension joinings of a class of Arnol'd flows in \cite{Multiple}. Suppose \(\nu\) is a non-trivial finite extension joining of an Arnol'd considered in our paper, then for \(a.e.\) \(x\), it corresponds to \(n\geq 2\) points \(y_1,...,y_n\) on the fiber by the measurable selection. We move \(x\) along the trajectories of the Arnol'd flow to a neighborhood of \(x\), and obtain a new point \(x'=T_{t_0}^fx.\) The fiber points of \(x'\) are \(y_1',...,y_n'.\) If \(x\) and \(x'\) are in some nice set, then each fiber point \(y_i\) is close to the fiber point \(y_i'.\) By invariance of the joining \(\nu\), there is a permutation \(\sigma\) on set \(\{1,...,n\}\) so that \(T_{t_0}^fy_i=y_{\sigma(i)}'\). We prove that such a permutation cannot exist, thereby ruling out non-trivial finite extension joinings. The distances between \(y_i\) and \(y_i'\) for \(1\leq i\leq n\) will give some constraints to the permutation \(\sigma.\) By shearing of orbits between nearby points \(y_i\) and \(y_i'\), we introduce new variants of Ratner's property arguments which allow us to give a precise distance between \(y_i\) and \(y_i'.\) If that permutation happens, there should be a relative shift distance between the base point \(x\) and its fiber points \(y_i\). Motivated by ideas in \cite{MSJFL}, we prove a result that allows us to compare orbits of pairs of points with arbitrary distances in \Cref{Large Shearing}, which gives an upper bound for the relative shift distance. However, this upper bound will result in contradictions with the constraints on the permutation \(\sigma\). The classification of centralizers uses similar ideas, and it is easier. The difficult part of this paper is that we need to control the orbits of points at arbitrary distances rather than only at very close distances. Moreover, we need to simultaneously handle different types of shearing and control the orbits of multiple pairs of points.
\\\\
\noindent \textbf{Relevant Questions:} It might be interesting to prove the minimal self-joining property for other surface flows or special flows. For special flows with asymmetric power like singularities, such as Kochergin flows, some ideas in this paper may still apply due to the strong shearing phenomenon. However, for symmetric singularities or higher genus cases, there may be some ``cancellation'' to produce non-trivial joinings. For von Neumann flows, it is also unclear whether they are 2-simple in typical cases due to the weaker shearing effect.
\\\\
\noindent \textbf{Outline of Paper:} In \Cref{Self-joining}, we present the definition of the minimal self-joining, and the switchable weak Ratner's properties. Our proof will use the fact that typical Arnol'd flows have the \textit{finite extension joining property}, which will be deduced from the switchable weak Ratner's property.

In \Cref{Special flows with singularities}, we give the conditions of the ceiling function and rotation angle \(\alpha.\) Those conditions will imply the finite extension joining and the switchable weak Ratner's properties in \Cref{Self-joining}.

In \Cref{Centralizer}, we prove that the centralizers of the special flows under the conditions in \Cref{Special flows with singularities} are trivial. Based on the results in \cite{Multiple} and \cite{Disjoint}, we develop some tools to compare the different orbits of a typical pair of points when their distance is arbitrary. By ergodicity of the flow, if a pair of points lies in some Lusin set, then we can rule out non-trivial centralizers.

In \Cref{General case of self-joining}, we show that all the finite-to-one joinings are actually one-to-one under conditions in \Cref{Special flows with singularities}. First, by shearing of orbits between nearby points, we develop variants of Ratner's properties in \Cref{Product Criteria} and \Cref{Simpleness Criteria}. If the joining is not one-to-one, then we have a certain permutation among those finitely many points on the fiber due to Ratner's properties. However, by iterating the above arguments finitely many times and use the estimates in \Cref{Large Shearing}, we can obtain a contradiction to the existence of the permutation. Together with the result in \Cref{Centralizer}, the minimal self-joining property follows.
\\\\
\noindent \textbf{Acknowledgments:} The author thanks Professor Jon Chaika for introducing this problem and for his constant help, patience, and instructions. The author thanks Krzysztof Fr\k{a}czek, Adam Kanigowski, Mariusz Lema$\acute{\text{n}}$czyk, and Corinna Ulcigrai for valuable discussions. The author also thanks the referees for their suggestions and comments to improve the readability of this paper. 
\section{\textbf{Self-Joining}}
\label{Self-joining}
In this section, we give the general definition of the minimal self-joining property and list some self-joining properties we will use later.
 Let \((T_t)_{t\in \mathbb{R}}\) be an ergodic flow acting on a standard probability Borel space \((X,\mathcal{B},\mu)\). Define \(\mathcal{T}=(X,(T_t)_{t\in \mathbb{R}},\mu)\).
\begin{DEF}
A $k$-self-joining of $\mathcal{T}$ is a probability measure $m$ on 
\(\prod_{i=1}^k X_i\) where \((X_i,(T_t)_{t\in \mathbb{R}},\mu)=\mathcal{T}\),
which is $(T_t \times \ldots \times T_t)_{t\in \mathbb{R}}$-invariant and $({\pi_i}) _* m = \mu$ where $\pi_i$ is the natural projection onto the $i$-th coordinate.
\end{DEF}
Denote \(C(\mathcal{T})\) to be the collection of automorphisms \(R: (X,\mu)\rightarrow (X,\mu)\) such that \(T_t\circ R =R\circ T_t\) for every \(t\in\mathbb{R}\). 
\begin{DEF}
We say that a \(k\)-self-joining \(m\) of \(\mathcal{T}\) is \textit{off-diagonal} if there are \(\phi_i\in C(\mathcal{T})\), \(1\leq i \leq k\), so that \(m\) is the image of \(\mu\) via map:
\[x\mapsto (\phi_1(x),\phi_2(x),...,\phi_k(x)).\]
\end{DEF}
\begin{DEF}
We say that \(\mathcal{T}\) is  \(\textit{k-simple}\) if every ergodic \(k\)-self-joining \(m\) is a product of off-diagonal measures. That is, there exists a partition \(P=\{P_1,...,P_{j}\}\) of \(\{1,2,...,k\}\) such that for each \(1\leq l \leq j\), \(({\pi_{P_{l}}})_{*}m\) is an off-diagonal measure and \(m=\prod_{l=1}^{j}({\pi_{P_{l}}})_{*}m\) where \(\pi_{P_{l}}\) is the projection on \(\prod_{i\in P_{l}} X_i.\)
\end{DEF}
\begin{DEF}
We say that \(\mathcal{T}\) has the \textit{k-fold minimal self-joining property} if it's \(k\)-simple and \(C(\mathcal{T})=\{T_t:t\in \mathbb{R}\}.\) If \(\mathcal{T}\) has \(k\)-fold minimal self-joining property for every \(k\geq2\), then we say \(\mathcal{T}\) has the \textit{minimal self-joining property.}
\end{DEF}
When the system \(\mathcal{T}\) is mixing, to show \(\mathcal{T}\) has the minimal self-joining property, it suffices to check the 2-fold minimal self-joining property.
\begin{PROP}\cite{2Simple}
If \(\mathcal{T}\) is a \(\textit{mixing}\) flow system, then the 2-fold minimal self-joining property implies the minimal self-joining property. 
\end{PROP}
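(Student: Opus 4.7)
The plan is to reduce minimal self-joining to verifying $k$-simplicity for every $k \geq 2$, which is established by induction, with $k=2$ supplied by the hypothesis. First note that 2-fold MSJ already forces $C(\mathcal{T}) = \{T_s : s \in \R\}$: any $R \in C(\mathcal{T})$ yields the ergodic graph joining $\Delta_R$, which by hypothesis must equal some $\Delta_{T_s}$, forcing $R = T_s$. Thus only higher-order simplicity remains to be verified.

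Fix $k \geq 3$, assume $(k-1)$-simplicity, and let $m$ be an ergodic $k$-self-joining. For each pair $i \neq j$, the two-dimensional marginal $m_{ij} \dfn (\pi_i, \pi_j)_\ast m$ is a 2-self-joining, and its ergodic decomposition consists, by 2-fold MSJ, of measures from the family $\{\mu \otimes \mu\} \cup \{\Delta_{T_s} : s \in \R\}$. The inductive step proceeds by a case analysis on these pairwise marginals.

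In the first case, suppose some $m_{ij}$ admits an off-diagonal component in its ergodic decomposition. Then the set $A = \{x \in X^k : x_j = T_s x_i \text{ for some } s \in \R\}$ has positive $m$-measure, and since $A$ is invariant under the diagonal flow, ergodicity of $m$ raises this to full $m$-measure. The map sending $x \in A$ to the unique $s$ with $x_j = T_s x_i$ (well-defined because mixing forces freeness) is diagonal-flow invariant, hence $m$-almost everywhere equal to a constant $s_0$. Projecting away the $j$-th coordinate yields an ergodic $(k-1)$-self-joining whose block decomposition, supplied by the inductive hypothesis, lifts to the desired decomposition of $m$ after adjoining $j$ to the block containing $i$.

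In the second case, every $m_{ij}$ equals $\mu \otimes \mu$ and one must conclude $m = \mu^{\otimes k}$, corresponding to the trivial partition. This is where mixing is genuinely used: the plan is to test $m$ against product functions $f_1 \otimes \cdots \otimes f_k$ with $\int f_i \, d\mu = 0$, apply the $(T_t \times \cdots \times T_t)$-invariance to shift coordinates, and choose shifts whose pairwise differences tend to infinity so that mixing, together with pairwise independence, decouples coordinates one at a time and drives the integral to zero. This second case is the main obstacle: pairwise independence alone does not imply full mutual independence in general, and the mixing hypothesis must be deployed carefully to break all residual higher-order correlations simultaneously; by contrast, the first case is largely formal once ergodicity of $m$ has been used to promote an off-diagonal component into an almost-everywhere deterministic relation.
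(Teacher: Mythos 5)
First, note that the paper does not actually prove this proposition: it is quoted with a citation, so your argument is being measured against the known proofs in the literature rather than against anything in this text. Your reduction of the centralizer claim to 2-fold MSJ and your Case 1 are essentially fine (modulo routine measurability points): on the invariant set where $x_j$ lies on the orbit of $x_i$, freeness (which follows from mixing) makes the time function $s(x)$ well defined, diagonal invariance plus ergodicity of $m$ makes it constant, and the inductive hypothesis applied to the projection gives the block structure.

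The genuine gap is Case 2, and it is not a technical one. Your plan to ``apply the $(T_t\times\cdots\times T_t)$-invariance to shift coordinates, and choose shifts whose pairwise differences tend to infinity'' is not available: $m$ is invariant only under the \emph{diagonal} flow, so you may never move the coordinates by different amounts; shifting all coordinates by the same $t$ leaves $\int f_1\otimes\cdots\otimes f_k\,dm$ unchanged and no decoupling occurs. What you are asserting in this case --- that for a mixing system an ergodic joining with all pairwise marginals equal to $\mu\otimes\mu$ must be $\mu^{\otimes k}$ --- is exactly the ``pairwise independence determines independence'' problem, which is closely tied to Rokhlin's higher-order mixing question and is not known to follow from mixing alone. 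The known proofs of the cited proposition (del Junco--Rudolph, Glasner--Host--Rudolph, Ryzhikov) do not discard the 2-fold MSJ hypothesis in this case: they use it again, typically by observing that mixing forces the off-diagonals $\Delta_{T_s}$ to converge to $\mu\otimes\mu$ as $|s|\to\infty$, so the ergodic 2-self-joinings form a closed (compact) family, and then running a limiting argument with joinings viewed as Markov intertwining operators (or with relatively independent joinings over a coordinate) to force the product structure of a pairwise independent joining. Without an argument of this kind, your induction does not close, so the proposal as written does not prove the proposition.
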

In the following, when we say a self-joining of a system \(\mathcal{T}\), we mean a 2-self-joining of \(\mathcal{T}.\) If a self-joining \(m\) is an off-diagonal measure, we will also say \(m\) is 2-simple.
\begin{DEF}
We say an ergodic self-joining \(m\) is a \textit{finite extension joining} of $\mathcal{T}$ if the natural projection $\pi$ is finite-to-one almost everywhere. If the only ergodic self-joinings of \(\mathcal{T}\) are the product measure and finite extension joinings, we say \(\mathcal{T}\) has the \textit{finite extension joining property.}
\end{DEF}
One can combine the finite extension joining property with the mixing property of a system \(\mathcal{T}\) to conclude that the system is multiple mixing. To show the finite extension property, it suffices to prove the SWR property as follows:
\begin{DEF}
\label{SWR}
Let \(K\subset \mathbb{R}\setminus\{0\}\) be a given compact set. Fix \(t_0\in \mathbb{R}_{+}.\) We say \(\mathcal{T}\) has the \(\textit{switchable}\) \(R(t_0,K)\) \(\textit{property}\) if for every \(\e>0\) and \(N\in \mathbb{N}\) there exist \(\kappa=\kappa(\e),\) \(\delta=\delta(\e,N)\) and a set \(Z=Z(\e,N)\in\mathcal{B}\) with \(\mu(Z)>1-\e\) such that for any \(x,y\in Z\) with \(d(x,y)<\delta\), \(x\) not in the orbit of \(y\) there exist \(M=M(x,y),\) \(L=L(x,y)\in \mathbb{N}\) with \(M,L>N\) and \(\frac{L}{M}\geq \kappa\) and \(p=p(x,y)\in K\) such that
\begin{equation}
\notag
    \frac{1}{L}|n\in[M,M+L]: d(T_{nt_0}(x),T_{nt_0+p}(y))<\e|>1-\e
\end{equation}
or
\begin{equation}
\notag
    \frac{1}{L}|n\in[M,M+L]: d(T_{n(-t_0)}(x),T_{n(-t_0)+p}(y))<\e|>1-\e.
\end{equation}
If the set of \(t_0\) such that \(\mathcal{T}\) has the switchable \(R(t_0,K)\)-property is uncountable, we say \(\mathcal{T}\) has \(\textit{SWR-property}.\)
\begin{RMK}
For initial Ratner's property in \cite{Ratner}, \(K=\{t_0,-t_0\}.\) However, for Ratner type properties developed in \cite{Mildmixing} and \cite{Multiple}, the compact set \(K\) is uniform for \(t_0.\)
\end{RMK}
\begin{THM}\cite[Theorem 4]{Multiple}
Let \(\mathcal{T}\) be a weakly mixing flow system. Suppose \(\mathcal{T}\) has the SWR-property, then \(\mathcal{T}\) has the finite extension joining property.   
\end{THM}
\end{DEF}

\section{\textbf{Special flows with singularities}}
\label{Special flows with singularities}
In this section, we give definitions of Arnol'd flow and its special flow representation. For the readers' convenience, we list the conditions mentioned in \cite{Multiple} and \cite{Disjoint}. The special flow we consider is at the end of this section.
Let \(R\) be an automorphism of a standard probability space \((X,\mathcal{B},\mu).\) Let \(f\in L^1(X,\mu)\). The special flow \((T_t^f)_{t\in \mathbb{R}}\) built over \((X,R,\mu)\) and under the ceiling (roof) function \(f\) is given by
\begin{equation}
\label{Definition of flow}
   T_{t}^f(x,s)=(R^mx,t+s-f^{(m)}(x))
\end{equation}
where \(m\) is the unique integer such that 
\(f^{(m)}(x)\leq t+s< f^{(m+1)}(x)\)
and 
\[
f^{(m)}(x) = \begin{cases}
f(x) + \ldots + f(R^{m-1}x) & \text{if } m > 0 \\
\qquad\qquad0 & \text{if } m = 0 \\
-(f(R^mx) + \ldots + f(R^{-1}x)) & \text{if } m < 0.
\end{cases}
\]
So we get a probability measure preserving flow space \index{\(X^f\): The special flow space}\((X^f,(T_t^f)_{t\in\mathbb{R}},\mu^f)\), where \(X^f=\{(x,s): x\in X, 0\leq s<f(x)\}\), \index{\(\mu^f\): The normalized measure of special flow space} \(\mu^f\) is the normalized measure of \(\mu\times \lambda_{\mathbb{R}}\) restricted to \(X^f\).
\index{\(Z^f:\) Measurable set in the special flow space} For a measurable set \(Z\subset X\), denote \(Z^f\coloneqq \{(x,s): x\in Z, 0\leq s<f(x)\}.\)

\begin{RMK}
Suppose \(R\) preserves a unique probability measure \(\mu\), then the special flow \((T_t^f)_{t\in\mathbb{R}}\) preserves a unique probability measure \(\mu^f.\)
\end{RMK}
Arnol'd flows are local Hamiltonian flows whose special flow representations have asymmetric logarithmic singularities roof function \(f\) and irrational rotation base action \(R.\) In general, the base action \(R\) is an interval exchange transformation, see \cite{IETBASE}, but we only consider the rotation case in our paper.

Let \(X=\mathbb{T}\), \(R=R_{\alpha}\) be an irrational circle rotation and \(\mu\) be the Haar measure on \(\mathbb{T}\). 
For any \(\alpha\in \mathbb{R}\setminus\mathbb{Q}\), we can consider the continued fraction expansion:
\[
\alpha= a_0+\frac{1}{a_1+\frac{1}{a_2+\frac{1}{...}}}.
\]
Let \[\frac{p_n}{q_n}\coloneqq
a_0+\frac{1}{a_1+\frac{1}{a_2+...+\frac{1}{{a_{n-1}+\frac{1}{a_n}}}}}\] be \(n\)-\(th\) term in the  sequence of \(\textit{best rational approximations}\) of \(\alpha.\) We call \(\{q_n\}_{n\in \mathbb{N}}\) \textit{denominators} of \(\alpha.\)
We can compute \(q_n\) inductively using following equation:
\[q_{n+1}=a_{n+1}q_n+q_{n-1}.\]
For \(x\in \mathbb{T}\), denote:
\[\|x\|=\min\{\{x\},\{1-x\}\},\]
where \(\{x\}\) is the fraction part of \(x.\)
From \cite{Khinchin}, we have the following estimate: 
\begin{equation}
\label{special time estimate}
    \frac{1}{q_n+q_{n+1}}<\|q_n\alpha\|<\frac{1}{q_{n+1}}.
\end{equation}
We also have the following two classical results.
\begin{THM}[Denjoy-Koksma]
\label{Classical Denjoy Koksma}  
Let \(\phi: \mathbb{T}\rightarrow \mathbb{R}\) be a function of bounded variation, \(\alpha \in \mathbb{R}\setminus \mathbb{Q}\). Then, 
\[\left|\phi^{(q_n)}(x)-q_n \int_{\mathbb{T}} \phi d m_{\mathbb{T}}\right | \leq Var(\phi)\]
for every \(x\in \mathbb{T}\) and denominators \(q_n\) of \(\alpha\) where \(m_{\mathbb{T}}\) is the Haar measure on \(\mathbb{T}.\)
\end{THM}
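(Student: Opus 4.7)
The plan is to prove the Denjoy--Koksma inequality by exploiting the continued-fraction structure of $\alpha$ at denominator times. The starting point is the B\'ezout identity $|p_n q_{n-1} - p_{n-1} q_n| = 1$, which combined with the alternating signs of $q_k\alpha - p_k$ yields
$$q_n \|q_{n-1}\alpha\| + q_{n-1}\|q_n\alpha\| = 1.$$
Geometrically, this encodes a Kakutani--Rokhlin tower decomposition of $\mathbb{T}$ under $R_\alpha$ into two columns of heights $q_n$ and $q_{n-1}$, whose levels are disjoint translates of arcs of length $\|q_{n-1}\alpha\|$ and $\|q_n\alpha\|$ respectively. Equivalently, by the three-distance theorem specialized to a denominator, the $q_n$ orbit points $\{x+k\alpha : 0 \le k < q_n\}$ partition $\mathbb{T}$ into $q_n$ arcs of exactly two distinct lengths.

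Using this rigid combinatorial structure, I would combine it with a Riemann--Stieltjes argument. Writing $\phi$ as a difference of two monotone BV functions (Jordan decomposition, lifted to $\mathbb{R}$ if needed) reduces the claim to the monotone case. For monotone $\phi$, expressing
$$\phi^{(q_n)}(x) - q_n \int_\mathbb{T} \phi\, dm_\mathbb{T} = \int_\mathbb{T} \phi\, d(\nu_x - q_n m_\mathbb{T})$$
with $\nu_x := \sum_{k=0}^{q_n-1}\delta_{x+k\alpha}$, integration by parts converts the right-hand side into an integral against $d\phi$ of the signed orbit discrepancy $D_x(y) := \nu_x([a,y)) - q_n(y-a)$ for a convenient basepoint $a$ where boundary terms vanish. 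Controlling this level by level using the Rokhlin-tower structure, so that the normalization identity above produces exact cancellation between the sampled Birkhoff values and the level-averages of $\phi$, yields the bound $\Var(\phi)$.

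The main obstacle is the sharp constant $1$. A cruder argument using only the three-distance theorem and Stieltjes integration by parts gives a bound of the form $C\,\Var(\phi)$ with $C > 1$, since $\sup_y |D_x(y)|$ can exceed $1$ in general. Obtaining the sharp constant requires exploiting the \emph{exact} Rokhlin-tower tiling identity above (rather than just the asymptotic estimate \eqref{special time estimate}), so that the level-wise comparison of orbit samples to the integral is exact up to the BV oscillation on each level. Careful bookkeeping of boundary effects at the interface between the two towers, and of arc endpoints on $\mathbb{T}$ viewed as a circle to avoid off-by-one errors, is the most delicate part; the remainder is a routine application of Riemann--Stieltjes calculus.
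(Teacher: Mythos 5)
The paper does not prove this statement at all---it is quoted as a classical result---so your sketch has to be measured against the standard argument, and it has a genuine gap at exactly the point you yourself flag as ``the most delicate part.'' The tiling identity \(q_n\|q_{n-1}\alpha\|+q_{n-1}\|q_n\alpha\|=1\) and the two-length (three-distance) structure record only the lengths and multiplicities of the arcs cut out by the orbit \(\{x+k\alpha\}_{k=0}^{q_n-1}\); they do not record how short and long arcs are interleaved, and that arrangement is what the constant (indeed any universal constant) hinges on. Concretely, if you compare each sampled value \(\phi(\xi_j)\) with \(q_n\int_{A_j}\phi\) on the arc \(A_j\) containing it, the per-arc error splits as \([\phi(\xi_j)-\mathrm{avg}_{A_j}\phi]+(1-q_n|A_j|)\,\mathrm{avg}_{A_j}\phi\). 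The first part sums to at most \(\Var(\phi)\), but the second does \emph{not} cancel level by level, since \(q_n|A_j|\neq 1\) (it is \(q_n\|q_{n-1}\alpha\|<1\) on short arcs and \(>1\) on long ones); its total equals \(\|q_n\alpha\|\,q_{n-1}(q_n-q_{n-1})\) times the difference between the mean of \(\phi\) over the short arcs and over the long arcs, and this prefactor can tend to infinity (e.g.\ for bounded-type \(\alpha\)). So ``exact cancellation from the normalization identity'' is not available, and equivalently, in your discrepancy formulation, the needed bound \(\sup_y|D_a(y)|\le 1\) at time \(q_n\) for a suitable basepoint is precisely what remains to be proved and does not follow from the tiling identity alone.

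The missing ingredient is the standard one-sided equidistribution lemma, which replaces the three-distance partition by an equipartition: since \(|\alpha-p_n/q_n|<1/(q_nq_{n+1})\) and \(k(\alpha-p_n/q_n)\) has the same sign for all \(0\le k<q_n\), while \(k\mapsto kp_n \bmod q_n\) is a bijection, each orbit point \(x+k\alpha\) lies in a distinct member of the partition of \(\mathbb{T}\) into the \(q_n\) intervals \([x+j q_n^{-1}+c,\,x+(j+1)q_n^{-1}+c)\) for an appropriate one-sided anchoring \(c\): exactly one point per interval. Because these intervals have exact length \(1/q_n\), the quantity \(q_n\int_{J_j}\phi\) is the average of \(\phi\) over \(J_j\), so each term \(\phi(\xi_j)-q_n\int_{J_j}\phi\) is bounded by the oscillation of \(\phi\) on \(J_j\), and summing over the disjoint intervals gives \(\Var(\phi)\) with constant exactly \(1\); no Jordan decomposition or Riemann--Stieltjes integration by parts is needed. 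Your discrepancy route can be salvaged, but only by deriving \(\sup_y|D_a(y)|\le 1\) from this same lemma, so it is the indispensable step your outline omits rather than a matter of bookkeeping.
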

\begin{LEM}[Ostrowski expansion]
\label{Classical  Ostrowski expansion}
Let \(\alpha\) be a given irrational number. Let \(a_n\) and \(q_n\) be numbers associated with \(\alpha\) as above. Every positive integer \(m\) can be written uniquely as
\[m= \sum_{n=1}^k b_n q_n,\]
where \(0\leq b_n \leq a_n\) and if \(b_n=a_n\) then \(b_{n-1}=0.\)
\end{LEM}
The conditions on rotation angle \(\alpha\) and roof function \(f\) that guarantee the finite extension joining property are given below. For details, we refer to \cite{Multiple} and \cite{Disjoint}.
The assumptions on \(\alpha\), the singularities and the roof function \(f\) are given by the following:\\
\textbf{Assumptions on the base rotation.} For \( \alpha\in \mathbb{R}\setminus\mathbb{Q}\) \index{\(\alpha\): Rotation angle}, let \(\{q_n\}_{n\in\mathbb{N}}\) \index{\(q_n\): Denominators} be the sequence of the denominators of the best rational approximations of \(\alpha\). 

\begin{DEF}
    We say \(\alpha\in \mathcal{D}'\) if \(\alpha\in \mathbb{T}\) and \(\alpha\) satisfies:\\
\((a)\) Let  \(K_{\alpha}:=\{n\in \mathbb{N} : q_{n+1}<q_n\log^{\frac{7}{8}}(q_n)\}\). Then
\begin{equation}
\notag
    \sum_{i\notin K_{\alpha}}\frac{1}{\log^{\frac{7}{8}}q_i}<\infty.
\end{equation}
\((b)\) There exists \(\tau\geq 0\) such that
\begin{equation}
\notag
    q_{n+1}<C_{\alpha} q_n^{1+\tau}
\end{equation}
where \(C_{\alpha}\) is a  constant depending only on \(\alpha\).
\end{DEF}

\begin{LEM}\cite[Proposition 1.7]{Multiple}
    The set \(\mathcal{D}'\) has the full Lebesgue measure in \(\mathbb{T}.\)
\end{LEM}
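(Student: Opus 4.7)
The plan is to verify conditions (a) and (b) separately via Borel--Cantelli estimates performed under the Gauss measure $d\mu_G = \frac{1}{\log 2}\frac{dx}{1+x}$, which is equivalent to Lebesgue on $[0,1]$; any $\mu_G$-a.e.\ statement then transfers to Lebesgue-a.e.\ in $\mathbb{T}$. The key analytic inputs are the conditional Gauss--Kuzmin estimate $\mu_G(a_{n+1} \geq M \mid a_1,\dots,a_n) \leq C/M$, uniform in $n$ and in the conditioning; the Fibonacci lower bound $q_n \geq \varphi^{n-1}$ obtained from the recursion $q_{n+1} = a_{n+1}q_n + q_{n-1}$; and L\'evy's theorem $(\log q_n)/n \to \pi^2/(12\log 2)$ almost surely.

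For (b), since $q_{n+1} \leq 2 a_{n+1} q_n$, it suffices to bound $a_{n+1}$. Taking $M = n^2$ in the Gauss--Kuzmin estimate, Borel--Cantelli gives $a_{n+1} \leq n^2$ eventually a.s.; combined with $q_n \geq \varphi^{n-1}$, this yields $q_{n+1} \leq C_\alpha q_n^{1+\tau}$ for any fixed $\tau > 0$, proving (b).

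For (a), since $q_{n+1} \leq (a_{n+1}+1)q_n$, the event $\{n \notin K_\alpha\}$ is contained in $\{a_{n+1} \geq \log^{7/8} q_n - 1\}$. Because $q_n$ is $\sigma(a_1,\dots,a_n)$-measurable, the conditional Gauss--Kuzmin estimate gives $\mu_G(n \notin K_\alpha \mid a_1,\dots,a_n) \leq C/\log^{7/8} q_n$, so by Tonelli
\begin{equation*}
\int \sum_{n \notin K_\alpha} \frac{d\mu_G}{\log^{7/8} q_n} \;\leq\; C \sum_n \int \frac{d\mu_G}{\log^{7/4} q_n}.
\end{equation*}
By L\'evy's theorem $\log q_n \gtrsim n$ a.s., so the right-hand integrand is a.s.\ comparable to $n^{-7/4}$ and the series converges; the left-hand sum is therefore finite $\mu_G$-a.e., proving (a). The principal subtlety is the dependence between $a_{n+1}$ and the threshold $\log^{7/8} q_n$, which itself depends on $\alpha$; this is precisely what the conditioning on $\sigma(a_1,\dots,a_n)$ resolves, freezing $q_n$ before applying the uniform Gauss--Kuzmin bound.
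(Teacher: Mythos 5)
The paper does not prove this lemma internally: it is quoted verbatim as Proposition 1.7 of \cite{Multiple}, so there is no in-paper argument to compare against, and the right question is whether your self-contained proof is sound. It essentially is, and it is the standard metric continued-fraction argument one would expect behind the citation. For (b), the bound $q_{n+1}\leq (a_{n+1}+1)q_n$, the Gauss--Kuzmin tail estimate with $M=n^2$ plus Borel--Cantelli, and the deterministic Fibonacci bound $q_n\geq \varphi^{n-1}$ (so that $n^2\leq C q_n^{\tau}$ for any fixed $\tau>0$, with $C_\alpha$ enlarged to absorb the finitely many exceptional indices) do give $q_{n+1}<C_\alpha q_n^{1+\tau}$ for all $n$; this is in the spirit of Khinchin's theorem, which the paper itself invokes for the analogous conditions $\mathcal{D}_2$, $\mathcal{D}_3$. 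For (a), the inclusion $\{n\notin K_\alpha\}\subseteq\{a_{n+1}\geq \log^{7/8}q_n-1\}$ and the conditioning on $\sigma(a_1,\dots,a_n)$, which freezes the random threshold $\log^{7/8}q_n$ before applying the uniform Gauss--Kuzmin bound, are exactly the right moves, and the resulting exponent $7/8+7/8=7/4>1$ is what makes the expectation of $\sum_{n\notin K_\alpha}\log^{-7/8}q_n$ finite, hence the sum finite almost everywhere, which transfers to Lebesgue by equivalence of measures. One small repair: to bound $\int \log^{-7/4}q_n\,d\mu_G$ you should not appeal to L\'evy's theorem, since an almost sure asymptotic for the integrand does not control the integral without a dominating function; instead use the deterministic bound $q_n\geq\varphi^{n-1}$ that you already listed, which gives $\log^{-7/4}q_n\leq C n^{-7/4}$ pointwise and summability directly. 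With that substitution, and the harmless convention of starting the sum at an index where $\log^{7/8}q_n-1\geq 1$ so the conditional estimate is nonvacuous, the argument is complete.
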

\noindent\textbf{Assumptions on singularities.}
\begin{DEF}
\label{Assumption on Singularties}
Given $\alpha \in \mathbb{R} \setminus \mathbb{Q}$, we say that 
\index{\(\Xi\): Set of singularities} $\Xi = \{a_1, \ldots, a_k\}$ is \textit{badly approximable by} $\alpha$ if there exists \index{\(C\): Constant related to badly approximable singularities} $C > 1$ such that for every $x \in \mathbb{T}$ and every $s \in \mathbb{N}$, there exists at most one $i_0 \in \{0,1, \ldots, q_s - 1\}$ such that
\begin{equation}
\notag
x + i_0\alpha \in \bigcup_{i=1}^{k} \left[-\frac{1}{2Cq_s} + a_i, a_i + \frac{1}{2Cq_s}\right].
\end{equation}
\end{DEF}
For the rest of the paper, we will assume \(\Xi=\{0\}.\) Therefore, we can take \(C=2\) in \Cref{Assumption on Singularties}.\\
\noindent\textbf{Assumptions on the roof function.}
\begin{DEF}
\label{Definition of roof}
Let the roof function \(f\) be a positive function of the form
\begin{equation}
\label{Roof function}
    f(x)=-A_{-}\log(x)-A_{+}\log(1-x)+g(x) 
\end{equation}
where \index{$A_{-}$: Constant in the roof function} \index{$A_{+}$: Constant in the roof function}
\index{$a$: Lower bound of the roof function}\(A_{-}>A_{+}\geq 0\) and  \(g(x)\in C^2(\mathbb{T}).\) \\
In particular, when \(x\neq 0,\) \(f(x)\geq a\) for some constant \(a>0\).
For simplicity, we will also assume \(\int_{\mathbb{T}} f d\lambda =1.\)
\end{DEF}
%\begin{DEF}
%    Let \(h\) be a positive function \(h\in C^2(\mathbb{T}\setminus\{0\})\), decreasing on \((0,1),\) \(\lim_{x\rightarrow 0^{+}}h(x)=+\infty,\) \(h'\) increasing on \((0,1)\). Let \(f\in C^2(\mathbb{T}\setminus\{a_1,...,a_k\}).\) We say that \(f\) has \(\textit{singularities of type}\) \(h\) at \(\{a_1,...,a_k\}\) if 
%    \begin{equation}
%    \notag
%        \lim_{x\rightarrow a_i^{+}} \frac{f'(x)}{h'(x-a_i)}=A_i
%        \text{\;and\;}
%        \lim_{x\rightarrow a_i^{-}} \frac{f'(x)}{h'(a_i-x)}=-B_i
 %   \end{equation}
%    with some constants \(A_i,B_i\geq 0\) for
%    \(i=1,...,k\), and \(\sum_{i=1}^k A_i \neq \sum_{i=1}^k B_i.\)
%\end{DEF}

With the above conditions, the Arnol'd flows have SWR property as in \Cref{SWR}.

\begin{THM}\cite[Proposition 3.3]{Multiple}
Suppose \(\alpha \in \mathcal{D}'\) and \(f\) satisfies condition in \Cref{Definition of roof}, then the measure preserving system \((X^f,(T_t^f)_{t\in \mathbb{R}},\mu^f)\) has SWR-property. In particular, the special flow has the finite extension property.
\end{THM}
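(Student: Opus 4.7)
The SWR property reduces to a shearing estimate for the Birkhoff sums of $f'$. For nearby points $(x,s_1)$ and $(y,s_2)=(x+\eta,s_2)$ with $|\eta|<\delta$ and $x$ not on the orbit of $y$, running the flow $T^f_{nt_0}$ until the first base coordinate becomes $R^N x$ forces the second base coordinate to be $R^N y = R^N x + \eta$, while the second fiber coordinate differs from the first by $f^{(N)}(x)-f^{(N)}(y)\approx -\eta\,(f')^{(N)}(\xi)$ up to a bounded error. So verifying SWR amounts to finding, on a large-measure set $Z$, a window $[M,M+L]$ with $L/M\geq\kappa$ in which $-\eta\,(f')^{(N)}(x)$ stays within $\epsilon$ of a single $p\in K$ for a $(1-\epsilon)$-density of integers $N$ in the window, either forward or backward in time.

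The main technical ingredient I would establish is a quantitative Denjoy--Koksma-type bound for the non-BV function $f'$. The badly-approximable hypothesis on $\Xi=\{0\}$ (with $C=2$) guarantees that at most one of the first $q_s$ iterates $R^j x$ falls within $1/(4q_s)$ of the singularity, so outside this unique close-return index the truncation of $f'$ at scale $q_s$ has bounded variation and \Cref{Classical Denjoy Koksma} applies. Combining this with the Ostrowski decomposition (\Cref{Classical  Ostrowski expansion}) of an arbitrary $N$ and with the summability and polynomial-growth conditions in $\mathcal{D}'$, I would deduce that $(f')^{(N)}(x)$ is Lipschitz-like in $N$ on windows placed between two consecutive close returns of $x$ to the singularity, outside an $\epsilon$-measure exceptional set $Z^c$.

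Given $\eta<\delta$, I would then choose the scale $n=n(\eta)$ so that $q_n\asymp 1/(|\eta|\log(1/|\eta|))$ and place the window $[M,M+L]$ of length $L\asymp q_n$ strictly between two such close returns, so that $\eta\,(f')^{(N)}(x)$ lies in the prescribed compact $K$ and fluctuates by at most $O(\epsilon)$ across the window. Setting $p:=-\eta\,(f')^{(M)}(x)$ gives the alignment condition, and the ratio $L/M\geq\kappa$ is controlled by the gap sizes between close returns, which is precisely where the polynomial-growth condition $q_{n+1}\leq C_\alpha q_n^{1+\tau}$ enters. The switchable formulation handles the case when the asymmetry $A_->A_+$ pushes $p$ out of $K$ in forward time: reversing $t_0$ reverses the sign of the accumulated shearing, and at least one of the two directions places $p$ in $K$. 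The main obstacle is precisely the treatment of close returns, where $(f')^{(N)}$ can jump by amounts comparable to $1/|\eta|$ and destabilize $p$; careful tuning of the window length to $q_n$ together with the switchable alternative absorbs these jumps. Once SWR is established, the finite-extension conclusion is immediate from \cite[Theorem 4]{Multiple} as recalled above.
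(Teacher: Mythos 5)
The paper does not prove this statement at all: it is quoted verbatim from \cite[Proposition 3.3]{Multiple}, so there is no in-paper argument to compare with, and your sketch has to be measured against the mechanism of that source (which the paper later re-uses in \Cref{Small Shearing}). At the level of strategy you are on the right track: reducing the SWR property to a shearing estimate for $f^{(N)}(x)-f^{(N)}(y)\approx -\eta (f')^{(N)}$, controlling $(f')^{(N)}$ by a truncated Denjoy--Koksma bound (\Cref{Classical Denjoy Koksma}) combined with the Ostrowski expansion, choosing the scale $q_n$ so that the accumulated shear enters a fixed compact set, and using the slow variation of $N\log N$ to keep $p$ stable over a window of proportional length is exactly the skeleton of the argument in \cite{Multiple}.

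There is, however, a genuine gap in your treatment of the ``switchable'' alternative, which is the actual content of the proposition. The compact set used here ($P$ in \eqref{The compact set}) is symmetric, so the sign of the accumulated shear never forces a change of time direction; your claim that reversing $t_0$ is needed because the asymmetry $A_->A_+$ ``pushes $p$ out of $K$'' misidentifies the difficulty. The switch is needed because the derivative estimates of \Cref{Denjoy Ostrowski} require the entire orbit segment carrying the window to stay at distance at least $1/(q_n\log^{7/8}q_n)$ from the singularity, and for a given $x$ this can fail in forward time; condition $(\mathcal{D}_1)$ (equivalently condition $(a)$ of $\mathcal{D}'$, the summability over $n\notin K_\alpha$) is precisely what makes the set $Z(\epsilon)$ of \eqref{Switchable Ratner set} --- on which at least one of the forward or backward orbit segments of length $\geq q_{n+1}/8$ avoids that neighborhood --- have measure $\geq 1-\epsilon$. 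This dichotomy is also what delivers the uniform ratio $L/M\geq\kappa(\epsilon)$: the singularity-free segment has length comparable to $q_{n+1}$, far longer than the time at which the shear first reaches $P$, whereas your plan to ``place the window strictly between two consecutive close returns'' gives no lower bound on $L/M$, since nothing prevents the next close return from occurring almost immediately after the shear enters the compact set. The polynomial-growth condition $(b)$ plays a different, auxiliary role (upper bounds on $q_{n+1}$), so attributing the $\kappa$-bound to it is a misplacement rather than a proof. To repair the sketch you would need to construct $Z(\epsilon)$ from $(\mathcal{D}_1)$, prove the forward/backward dichotomy on it, and only then run your shearing computation in the singularity-free direction.
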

\begin{RMK}
In \cite{Multiple} and \cite{IETBASE}, the roof function \(f\) is allowed to have multiple singularities. However, we focus only on one singularity case.
\end{RMK}

%The following proposition is crucial in our proof:
%\begin{PROP}
%Let \((X^f,(T_t^f)_{t\in \mathbb{R}},\mu^f)\) be the measure preserving system.
%Let \(P\subset \mathbb{R}\setminus \{0\}\) be a compact set. For every \(\e>0\) and \(N\in\mathbb{N}\) there exist \(\kappa=\kappa(\e)\), \(\delta=\delta(\e,N)\) and a set \(X'=X'(\e,N)\) with \(\mu(X')>1-\e\), such that for any \(x,y\in X'\) with \(0<d(x,y)<\delta\) there exist integers \(M=M(x,y),\) \(L=L(x,y)\) with \(M,L\geq N,\) \(\frac{L}{M}\geq \kappa\) and \(p=p(x,y)\in P\) such that
%\begin{equation}
%    |f^{(n)}(x)-f^{(n)}(y)-p|< \e 
%    \textit{\; for every \;} n\in [M,M+L]
%\end{equation}
%or
%\begin{equation}
%    |f^{(-n)}(x)-f^{(-n)}(y)-p|< \e 
%    \textit{\; for every \;} n\in [M,M+L].
%\end{equation}
%\end{PROP}

\section{\textbf{Centralizer of the special flow}}
\label{Centralizer}
\subsection{Classification of centralizer}
First, we show that the centralizer of \((T_t)_{t\in \mathbb{R}}^f\) must be trivial for \(\alpha\) in a full Lebesgue measure set.
In the course of the proof, we need to make some assumptions on \(\alpha\). 
\begin{DEF}
 \index{\(\mathcal{D}\): Diophantine condition } 
\label{Definition of rotation angle}
    We say \(\alpha\in \mathcal{D}\) if \(\alpha\in \mathbb{T}\) and \(\alpha\) satisfies:\\
\((\mathcal{D}_1)\) Let  \(K_{\alpha}:=\{n\in \mathbb{N} : q_{n+1}<q_n\log^{\frac{7}{8}}(q_n)\}\). Then
\begin{equation}
\sum_{i\notin K_{\alpha}}\frac{1}{\log^{\frac{7}{8}}q_i}<\infty.
\end{equation}
\((\mathcal{D}_2)\) For every \(n\in \mathbb{N}\)
\begin{equation}
q_{n+1}<C_{\alpha} q_n \log^{3/2} q_n
\end{equation}
where \(C_{\alpha}\) is a  constant depending only on \(\alpha\).\\
\((\mathcal{D}_3)\) There exists a subsequence \((n_k)_{k\geq 1}\), such that
\begin{equation}
q_{n_k+1}\geq q_{n_k}\log q_{n_k} \log \log q_{n_k}.
\end{equation}
\end{DEF}
\begin{THM}
\label{Theorem of centralizer}
Suppose \(\alpha \in \mathcal{D}\) and \(f\) satisfies the condition in \Cref{Definition of roof}, then the centralizers of \((X^f,(T_t^f)_{t\in \mathbb{R}},\mu^f)\) are \(T_{t_0}^f\) where \(t_0\in \mathbb{R}.\)
\end{THM}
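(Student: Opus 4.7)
Given $R\in C(\mathcal{T})$, form the graph self-joining $\Delta_R(A\times B)=\mu^f(A\cap R^{-1}B)$. It is $T_t^f\times T_t^f$-invariant and ergodic (since $R$ commutes with an ergodic flow), and, being concentrated on a graph, it is a finite (even one-to-one) extension of $\mu^f$; the alternative conclusion $\Delta_R=\mu^f\otimes\mu^f$ from \cite[Theorem 4]{Multiple} is excluded because a product measure is not supported on a graph. To prove $R=T_{t_0}^f$ it suffices to show that $R$ sends $\mu^f$-a.e.\ point to a point in its own flow orbit: then $Rx=T_{\tau(x)}^fx$ for a measurable $\tau$, commutation with $T_t^f$ makes $\tau$ flow-invariant, and ergodicity forces $\tau$ to be constant.

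\textbf{Shearing along rigidity times.} Assume, for contradiction, that on a set of positive $\mu^f$-measure the base projections $x_1=\pi_X(x)$ and $x_2=\pi_X(Rx)$ lie on distinct $R_\alpha$-orbits. Fix such a generic pair $((x_1,s_1),(x_2,s_2))$ inside the set $Z^f\times Z^f$ provided by the SWR property and work along the rigidity times $q_{n_k}$ supplied by condition $(\mathcal{D}_3)$. Set $t_k^{\ast}=f^{(q_{n_k})}(x_1)$, so that $T_{t_k^{\ast}}^f(x_1,s_1)=(R_\alpha^{q_{n_k}}x_1,s_1)\to(x_1,s_1)$ by \eqref{special time estimate}. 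Let $m_k$ be the unique integer with $f^{(m_k)}(x_2)\le t_k^{\ast}<f^{(m_k+1)}(x_2)$; then
\[
T_{t_k^{\ast}}^f(x_2,s_2)=\bigl(R_\alpha^{m_k}x_2,\;s_2+\Delta_k\bigr),\qquad\Delta_k\;\dfn\;t_k^{\ast}-f^{(m_k)}(x_2),
\]
and \Cref{Classical Denjoy Koksma} gives $\Delta_k=f^{(q_{n_k})}(x_1)-f^{(q_{n_k})}(x_2)+O(1)$. The asymmetric logarithmic singularity of $f$ at $0$, together with $(\mathcal{D}_3)$ and the Ostrowski decomposition of \Cref{Classical  Ostrowski expansion}, produces a nonzero drift along a subsequence whenever $x_1$ and $x_2$ lie on distinct $R_\alpha$-orbits; this is the content of the forward large-shearing estimates \eqref{Forward large n_k} and \eqref{Forward large n_k+1}, which extend the closeness-required arguments of \cite{MSJFL,Multiple,Disjoint} to pairs at arbitrary distance.

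\textbf{Conclusion and main obstacle.} Extracting a further subsequence so that $R_\alpha^{q_{n_k}}x_1\to x_1$, $R_\alpha^{m_k}x_2$ converges in $\mathbb{T}$, and $\Delta_k$ (reduced at the limit point) tends to some $p$ in the compact drift set $K$ of the SWR framework, one feeds the outcome into the abstract criteria of \Cref{Product Criteria} and \Cref{Simpleness Criteria} to obtain invariance of $\Delta_R$ under $\mathrm{id}\times T_p^f$ with $p\ne 0$. But then $\Delta_R=(\mathrm{id}\times T_p^f)_{\ast}\Delta_R=\Delta_{T_p^f\circ R}$, so $T_p^f\circ R=R$ $\mu^f$-a.e., hence $T_p^f=\mathrm{id}$ on a positive-measure set, contradicting the freeness of the weakly mixing flow. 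Therefore $R$ preserves the flow orbit relation and, by the first paragraph, $R=T_{t_0}^f$ for some $t_0\in\mathbb{R}$. The main difficulty is obtaining the large-shearing estimate when $x_1$ and $x_2$ are at an arbitrary (not necessarily small) distance, and in particular when they sit on opposite sides of the singularity at $0$; this regime falls outside the reach of previous SWR-style arguments and requires the refined analysis of \Cref{Large Shearing}, which in turn dictates the three-part condition in \Cref{Definition of rotation angle}.
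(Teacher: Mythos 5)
Your overall architecture (graph joining attached to \(R\), reduction to showing \(R\) preserves flow orbits, shearing along the times \(q_{n_k}\), and a final drift-versus-commutation contradiction) is in the spirit of the paper, but the central step is asserted rather than proved, and as stated it fails. You claim that after passing to a subsequence the relative drift \(\Delta_k\) converges to some nonzero \(p\) in a fixed compact set, citing the large-shearing estimates \eqref{Forward large n_k} and \eqref{Forward large n_k+1}. Those estimates only bound \(|f^{(q_r)}(x_1)-f^{(q_r)}(x_2)|\) between \(\mathfrak{d}_1\) and \(\mathfrak{d}_2\log q_r\); the upper bound is unbounded in \(k\), so the drift does not live in a compact set, and a large Birkhoff-sum difference does not by itself give a contradiction, because the time difference is only meaningful modulo the return structure: precisely because of the strong shearing at the asymmetric logarithmic singularity, \(T^f_{t_k^{\ast}}(x_2,s_2)\) can land back in a \(\delta\)-neighborhood of \((x_2,s_2)\) even though the Birkhoff sums differ by a quantity of order \(\log q_r\). (Your identity \(\Delta_k=f^{(q_{n_k})}(x_1)-f^{(q_{n_k})}(x_2)+O(1)\) hides exactly this: the correction is \(-f^{(m_k-q_{n_k})}(R_{\alpha}^{q_{n_k}}x_2)\), which is not \(O(1)\).) The paper's proof of \Cref{Theorem of centralizer} needs a different mechanism at this point: if the image does return \(\delta\)-close, then the horizontal displacement satisfies \(\|D\alpha\|\geq 1/q_{n_k}\), which is much larger than \(\|q_r\alpha\|\leq 1/q_{n_k+1}\) (this is where \((\mathcal{D}_3)\) enters), and one then runs a second Ratner-type comparison between the glued pair \((x,x+q_r\alpha)\), which stays together by \eqref{Denjoy estimate for base}, and the pair \((y,y+D\alpha)\), which acquires a drift \(p\in P\) by the argument of \cite[Proposition 4.5]{Multiple}; the uniform Birkhoff ergodic theorem (\Cref{Classical uniform Birkhoff ergodic theorem}) then supplies a time \(t_{*}\) at which uniform continuity of the centralizer on a Lusin set is violated. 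None of this mechanism appears in your sketch.

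A second gap is in the good-pair (small-shearing) regime, where \Cref{Large Shearing} does not apply at all: there the drift at time \(q_{n_k}\) alone can be arbitrarily small, so merely extracting a convergent subsequence of \(\Delta_k\) may give \(p=0\) and no contradiction. The paper obtains a drift in the compact set \(P\) only by iterating to times \(\ell_0 q_m\) with a carefully chosen \(\ell_0\leq \max\{q_{m+1}/(16q_m),1\}\) (\Cref{Small Shearing}), showing the estimate persists along long orbit segments (\Cref{Small shearing preserving}), and then running the Lusin/density argument of \Cref{Small shear flow}. Finally, \Cref{Product Criteria} and \Cref{Simpleness Criteria} do not yield invariance of \(\Delta_R\) under \(\mathrm{id}\times T^f_p\); they conclude that a joining is the product measure or rule out certain fiber configurations, and in the paper they are used only for the general self-joining case in \Cref{General case of self-joining}, not for the centralizer. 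Your closing step (\(T^f_p\circ R=R\) contradicts freeness) would be a legitimate way to finish if the compact nonzero drift were established, but as written the proposal assumes the two hardest estimates of the actual proof.
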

\noindent By Khinchin’s theorem, \(\mathcal{D}_2\) and \(\mathcal{D}_3\) are of the full Lebesgue measure. It follows from \cite[Appendix]{Multiple} that \(\mathcal{D}_1\) also has full Lebesgue measure.
Before we give the proof, let us introduce the notations that will be used later. Let \(\alpha\in \mathcal{D}\) be given, we denote
\index{\(H\): Constant \(H\)}
\[H\coloneqq (A_{-}-A_{+})-\min\{\frac{1}{100}, \frac{A_{-}-A_{+}}{10}\},\]
\index{\(P\): The compact set away from 0}
\begin{equation}
\tag{*}
\label{The compact set}
  P\coloneqq [-2H-2, -\frac{H}{200}] \cup [\frac{H}{200}, 2H+2],
\end{equation}
and
\index{\(d_k\): Distance between orbits of \(x\) and \(y\)}
\[d_k=d_k(x,y)\coloneqq \min\limits_{-q_{n_k}<j<q_{n_k}}d(x,R_{\alpha}^j y).\]
We introduce \(d_k\) to represent the minimal distance between orbits \(x\) and \(y\) up to special times \(q_{n_k}.\) For a pair of points \((x,s_1)\) and \((y,s_2)\) in \(X^f\), the orbits of those two points under the special flow at some special times are determined by the quantity \(d_k\). Due to the singularity of the roof function, even a tiny change of \(d_k\) will cause completely different behaviors of these two orbits. Since we do not have any prior information about \(x\) and \(y\), we have to discuss all possibilities of \(d_k.\)
\begin{DEF}
\label{Definition of good pair}
We say \((x,y)\in \mathbb{T}^2\) is a \(\textit{small pair of order k}\) if \(d_k\in (0, \frac{1}{\r\log \r }].\) 
We say \((x,y)\in \mathbb{T}^2\) is a \(\textit{close pair of order k}\) if \(d_k\in[\frac{5}{6\r}, \frac{1}{q_{n_k} \log q_{n_k}}).\) We say \((x,y)\) is a \(\textit{good pair of order k}\) if it's either a small or close pair of order \(k\).
\end{DEF}

\subsection{Step 1}
Let \(\nu\) be a 2-simple self-joining of the special flow \((X^f,(T_t)_{t\in \mathbb{R}}^f,\mu^f)\) built from a centralizer, which means there is a measurable function \(\phi:X^f \rightarrow X^f \) such that the set \index{\(\Omega\): A full measure set in graph joining case}
\begin{equation}
\label{Carrying set of graph joining}
\Omega\coloneqq \{((x,s), \phi(x,s)): (x,s)\in X^f\}
\end{equation}
has measure \(\nu(\Omega)=1\) and 
\(\phi(T_t^f(x,s))=T_t^f(\phi(x,s))\) for every \(t\in \mathbb{R}\) and \(a.e.\) \((x,s)\in X^f\).
Suppose that the centralizer is not trivial, this means that \(x\) and \(y\) are not on the same orbit where \(y\) is the first coordinate of \(\phi(x,s).\)
The first step is to understand the differences between Birkhoff sums 
\begin{equation}
\label{Birkhoff differences}
 f^{(m)}(x)-f^{(m)}(y).   
\end{equation}
We will deal with two cases separately, we call them \(\textit{small shearing}\) (see \Cref{Small Shearing}) and \(\textit{large shearing}\) (see \Cref{Large Shearing}) according to the distance between orbits of \(x\) and \(y.\) In the small shearing case, we don't know when orbits of \(x\) and \(y\) will diverge, but we know the divergence distance between orbits \(x\) and \(y\) will be in the given compact set \(P\). In the large shearing case, the divergence distance may not be in the given compact set, but we know the definite times when Birkhoff sum divergence occurs.
In the following, recall that we assume \(\int_{\mathbb{T}} f d\lambda =1.\) 

\begin{LEM}{\cite[Lemma 6.2]{Disjoint}}
\label{Denjoy-Koksma}
There exists a constant \(C(f)>0\) such that for every \(x\in \mathbb{T}\) and \(n\in \mathbb{N}\),
we have:
$$
|f^{(q_n)}(x)-q_n|<C(f)(\log(q_n)+|\log I_{n,x}|)
$$

$$
|f'^{(q_n)}(x)- (A_{-}-A_{+}) q_n\log (q_n)|<C(f)q_n(1+\frac{1}{I_{n,x}})
$$
where \(I_{n,x}:=q_n \displaystyle\min_{0\leq j< q_n}\|x+j\alpha\|\).
\end{LEM}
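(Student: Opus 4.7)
The plan is to decompose $f = \phi_- + \phi_+ + g$, where $\phi_-(x) = -A_-\log x$, $\phi_+(x) = -A_+\log(1-x)$, and $g$ is the absolutely continuous remainder, and to analyze the Birkhoff sums of each piece along the orbit $\{R_\alpha^j x\}_{j=0}^{q_n-1}$. The contribution of $g$ is absorbed into the error: since $g$ is absolutely continuous on $\mathbb{T}$ and hence of bounded variation, the classical Denjoy--Koksma inequality (\Cref{Classical Denjoy Koksma}) yields $|g^{(q_n)}(x) - q_n\int g\,d\lambda| = O(1)$, which is swallowed by $C(f)(\log q_n + |\log I_{n,x}|)$. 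For the derivative, one handles $g'$ either by the trivial $O(q_n)$ sup-norm bound or by a Denjoy--Koksma bound if $g'$ is assumed to have bounded variation on each side of the singularity, absorbing it into $C(f)q_n$.

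The heart of the argument is the estimation of the logarithmic pieces. The key tool is the three-distance theorem (a consequence of the Ostrowski expansion, \Cref{Classical Ostrowski expansion}): the $q_n$ orbit points $\{x+j\alpha\}_{j=0}^{q_n-1}$ partition $\mathbb{T}$ into intervals of at most three distinct lengths, all lying in $[c/q_n, C/q_n]$ for universal constants $0<c<C$. Relabeling them as $0<y_1<y_2<\cdots<y_{q_n}<1$, we obtain $y_k \ge c(k-1)/q_n$ and $1-y_k \ge c(q_n-k)/q_n$; moreover, $\min(y_1, 1-y_{q_n}) = \min_{0\le j<q_n}\|x+j\alpha\| = I_{n,x}/q_n$, and this quantity can be arbitrarily small. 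Thus, after singling out this unique ``nearest'' orbit point, the remaining points are quantitatively well-spread on each side of the singularity.

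For the first estimate, write $\phi_-^{(q_n)}(x) = -A_-\sum_k \log y_k$ and split off the nearest-point contribution. The worst case is $y_1 = I_{n,x}/q_n$, giving $-A_-\log y_1 = A_-(\log q_n + |\log I_{n,x}|)$; the remaining terms $\sum_{k\ge 2} -A_-\log y_k$ are compared via the three-gap structure to the Riemann sum for $q_n\int_0^1 -A_-\log x\,dx = A_- q_n$, with error $O(\log q_n)$ coming from the integrable logarithmic singularity restricted to $[c/q_n,1]$. The symmetric analysis of $\phi_+$ near $1$ and the $O(1)$ contribution of $g$ together recover the main term $q_n\int f\,d\lambda = q_n$ and the stated error $O(\log q_n + |\log I_{n,x}|)$.

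For the derivative estimate, the analogous sum $\sum_k 1/y_k = 1/y_1 + \sum_{k\ge 2}1/y_k$ has nearest-point contribution bounded by $q_n/I_{n,x}$, while $\sum_{k\ge 2}1/y_k$ is controlled by $(q_n/c)\sum_{k=2}^{q_n} 1/k = O(q_n\log q_n)$, with the main term $q_n\log q_n$ extracted by comparison against $q_n\int_{c/q_n}^1 dx/x$. Combined with the symmetric analysis of $1/(1-y_k)$ near $1$, this yields the $(A_--A_+)q_n\log q_n$ main term (up to the sign convention for $f'$) with error $O(q_n(1+1/I_{n,x}))$. The main obstacle is making the Riemann-sum comparison tight enough that the error does not exceed the stated bound \emph{uniformly} in $x$ and $n$: this requires the three-gap theorem in its quantitative form, namely that the $k$-th closest orbit point to the singularity lies in $[c(k-1)/q_n, Ck/q_n]$ for universal constants, and careful bookkeeping of the boundary term where the integrable ($\log x$) and non-integrable ($1/x$) regimes must be handled differently.
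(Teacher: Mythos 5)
Your overall architecture is the right one and matches the standard proof of this lemma (the paper itself does not reprove it; it simply cites \cite[Lemma 6.2]{Disjoint}): split $f$ into the two logarithmic pieces plus the absolutely continuous part, isolate the single orbit point nearest the singularity (which produces the $|\log I_{n,x}|$, resp. $q_n/I_{n,x}$, terms via \Cref{Space of orbits}), and compare the remaining sum to the integral. The genuine gap is in the tool you invoke for that comparison. The three-gap input $y_k\in[c(k-1)/q_n,\,Ck/q_n]$ with universal constants $c<1<C$ only controls the sums up to a multiplicative constant: for the zeroth-order sum it gives $\sum_{k\ge 2}(-\log y_k)=q_n\log q_n-\log\bigl((q_n-1)!\bigr)+O(q_n)=q_n+O(q_n)$, an additive error of order $q_n$ rather than the required $O(\log q_n)$; for the derivative it only places $\sum_{k\ge 2}1/y_k$ between $\tfrac1C q_n\log q_n+O(q_n)$ and $\tfrac1c q_n\log q_n+O(q_n)$, so the coefficient $(A_--A_+)$ of the main term is not recovered at all. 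What the Riemann-sum comparison really needs is discrepancy $O(1/q_n)$ for the orbit segment of length $q_n$ (equivalently, that every interval $I$ receives $q_n|I|+O(1)$ points), and gap-length bounds with constants $1/2$ and $2$ do not imply this: adversarial gaps in $[\tfrac{1}{2q_n},\tfrac{2}{q_n}]$ allow counting errors of size $\Theta(q_n|I|)$.

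The fix is to use for the singular parts exactly the tool you reserved for $g$: apply \Cref{Classical Denjoy Koksma} to truncations. Let $\bar f_n$ agree with $f$, and $\bar h_n$ with $f'$, outside the window of radius $\tfrac{1}{2q_n}$ around the singularity, suitably flattened inside (this is the same device the paper uses with $\bar{f_m}$ in the proof of \Cref{Large Shearing}). Then $Var(\bar f_n)=O(\log q_n)$ and $Var(\bar h_n)=O(q_n)$, so Denjoy--Koksma gives $|\bar f_n^{(q_n)}(x)-q_n\int\bar f_n\,d\lambda|=O(\log q_n)$ and $|\bar h_n^{(q_n)}(x)-q_n\int\bar h_n\,d\lambda|=O(q_n)$, while $q_n\int\bar f_n\,d\lambda=q_n+O(\log q_n)$ and $q_n\int\bar h_n\,d\lambda=-(A_--A_+)q_n\log q_n+O(q_n)$ (matching the statement up to the sign convention you already flagged, since $\int_{v}^{1-v}f'=f(1-v)-f(v)$). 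By \Cref{Space of orbits} at most one point of $\{x+j\alpha\}_{0\le j<q_n}$ lies in the window, and it contributes $O(\log q_n+|\log I_{n,x}|)$, resp. $O(q_n/I_{n,x})$. This yields both stated bounds, and the three-gap bookkeeping becomes unnecessary. Your caveat about $g'$ is also apt: absolute continuity of $g$ alone does not control $g'^{(q_n)}$, and one needs the stronger regularity assumed in \cite{Disjoint} for the derivative estimate.
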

When \(m\in \mathbb{N}\) is arbitrary, we will use the Ostrowski expansion in \Cref{Classical  Ostrowski expansion} to iterate the above estimate whenever the pairs are away from singularities.
\begin{LEM}{\cite[Lemma 6.3]{Disjoint}}
\label{Sum Ostrowski}
There exists \(N_0>0\) such that for every \(N\geq N_0\) and \(x\in \mathbb{T}\) satisfying
\[\{x+j\alpha: 0\leq j \leq N\} \cap [-\frac{1}{2N\log ^4 N}, \frac{1}{2N \log ^4 N}]=\emptyset,\]
we have
\[|f^{(n)}(x)-n|<N^{1/5}\] for all \(n\in [0,N]\cap \mathbb{Z}.\)
\end{LEM}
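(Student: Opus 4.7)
The plan is to decompose an arbitrary $n \in [0,N]$ by Ostrowski expansion and apply the block-wise Denjoy--Koksma estimate of \Cref{Denjoy-Koksma} to each block, using the hypothesis on the orbit of $x$ to keep every intermediate iterate away from the singularity at $0$.

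First I would invoke \Cref{Classical  Ostrowski expansion} to write
\[
n \;=\; \sum_{i=1}^{k} b_i q_i, \qquad 0 \leq b_i \leq a_i,
\]
where $q_k \leq n \leq N$, so that $k = O(\log N)$. Reading this expansion as a concatenation of Birkhoff blocks of lengths $q_i$ yields
\[
f^{(n)}(x) \;=\; \sum_{i=1}^{k} \sum_{j=1}^{b_i} f^{(q_i)}\bigl(R_\alpha^{m_{i,j}} x\bigr),
\]
for starting indices $m_{i,j} \in [0,n)$ obtained as partial sums of the preceding block lengths. For each summand, \Cref{Denjoy-Koksma} gives
\[
\bigl|f^{(q_i)}(R_\alpha^{m_{i,j}} x) - q_i\bigr| \;\leq\; C(f)\bigl(\log q_i + |\log I_{q_i,\, R_\alpha^{m_{i,j}} x}|\bigr).
\]
Because $m_{i,j}+\ell \in [0,N]$ for all $0 \leq \ell < q_i$, the hypothesis on $x$ forces $\|R_\alpha^{m_{i,j}+\ell} x\| \geq \tfrac{1}{2N\log^4 N}$, hence $I_{q_i,\, R_\alpha^{m_{i,j}} x} \geq q_i/(2N\log^4 N)$, and therefore $|\log I_{q_i,\, \cdot}| = O(\log N)$ uniformly in $(i,j)$.

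Summing over all blocks and using $\int_{\mathbb{T}} f \, d\lambda = 1$ yields
\[
|f^{(n)}(x) - n| \;\leq\; C(f)\cdot O(\log N) \cdot \sum_{i=1}^{k} b_i.
\]
Under the Diophantine control $q_{n+1} < C_\alpha q_n \log^{3/2} q_n$ of $(\mathcal{D}_2)$, one has $a_i = O(\log^{3/2} N)$, so $\sum_{i=1}^{k} b_i = O(\log^{5/2} N)$ and the total is $O(\log^{7/2} N)$, which is dominated by $N^{1/5}$ once $N \geq N_0$. Even under the weaker polynomial assumption $q_{n+1} < C_\alpha q_n^{1+\tau}$ inherent to $\mathcal{D}'$, the same scheme gives $\sum_i b_i = O(N^{\tau} \log N)$, which is still dominated by $N^{1/5}$ provided $\tau$ is small; any remaining constants are absorbed into the choice of $N_0$.

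The main obstacle is ensuring the singularity-avoidance can be invoked \emph{uniformly} over every block of the Ostrowski decomposition: the quantity $I_{q_i, R_\alpha^{m_{i,j}} x}$ involves all intermediate iterates, so the hypothesis needs to hold at every time $m_{i,j}+\ell \leq N$, not merely at multiples of $q_i$. This is precisely why the lemma states the exclusion on the full orbit segment of length $N$, even though only a subset of these times actually appears in the expansion of a given $n$. The remaining delicate point is the bookkeeping of the starting points $m_{i,j}$, but once an order is fixed in which the Ostrowski blocks are read off (e.g.\ shortest first), the telescoping decomposition of $f^{(n)}$ is routine.
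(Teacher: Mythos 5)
The paper does not prove this lemma; it imports it verbatim from \cite[Lemma 6.3]{Disjoint}, and your reconstruction --- Ostrowski expansion of $n$, cocycle decomposition into blocks of length $q_i$, and the block-wise estimate of \Cref{Denjoy-Koksma} with the orbit hypothesis forcing $I_{q_i,\,R_\alpha^{m_{i,j}}x}\geq \frac{1}{2N\log^4 N}$, hence a per-block error $O(\log N)$ --- is exactly the standard route and is correct as long as the number of blocks $\sum_i b_i$ is polylogarithmic in $N$. Under the condition $(\mathcal{D}_2)$ of \Cref{Definition of rotation angle}, which is the standing assumption in the sections where \Cref{Sum Ostrowski} is actually used (and is comparable to the hypotheses of \cite{Disjoint}), your count $a_i=O(\log^{3/2}N)$, $k=O(\log N)$, total error $O(\log^{7/2}N)\ll N^{1/5}$ goes through.

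The one genuine soft spot is your fallback remark that the polynomial condition $q_{n+1}<C_\alpha q_n^{1+\tau}$ of $\mathcal{D}'$ ``still'' suffices: the bound $\sum_i b_i=O(N^{\tau}\log N)$ beats $N^{1/5}$ only when $\tau<1/5$, and this is not merely a loss in your estimate --- the lemma itself fails without near-linear growth of the denominators. If $q_k\leq N^{4/5}$ while $q_{k+1}>N$, one can place $x$ so that each of the $b\approx N/q_k$ blocks of length $q_k$ has its closest orbit point at distance comparable to $\frac{1}{2N\log^4 N}$ from the singularity (still satisfying the hypothesis), and each such block contributes a positive deviation of order $\log b$, giving $|f^{(n)}(x)-n|\gtrsim (N/q_k)\log N\gg N^{1/5}$. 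So the Diophantine control is an essential hypothesis inherited from \cite{Disjoint} (here, $(\mathcal{D}_2)$), not an optional refinement; with that correction your argument is complete.
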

\noindent If we denote \[O_n\coloneqq\{x\in \mathbb{T}: x-q_n\alpha,...,x,...,x+(q_n-1)\alpha\notin [-\frac{1}{2q_n \log^4q_n},\frac{1}{2q_n\log^4 q_n}]\}.\]
\noindent Given \(\e>0,\) there exists \(\N_0(\e)\), such that
\begin{equation}
\label{Uniform set O}
   O(\e)\coloneqq\displaystyle\bigcap_{n\geq \N_0(\e)} O_n 
\end{equation}
has measure \(\lambda(O(\e))\geq 1-\e\) and \(\mu^f(O(\e)^f)\geq 1-\e.\) \index{\(\N_0(\e)\)} \index{\(O(\e)\): Uniform set}This fact tells us that we can compare Birkhoff sums with flow time for \(x\) in an almost full measure set. We have the following derivative estimate to compare the divergence between the orbits of nearby points.  First, we need the orbits of points to be away from the singularity.
Denote 
\begin{equation}
\Sigma_{n}(M)\coloneqq \bigcup_{i=0}^{M q_{n+1}} R_{\alpha}^{-i}([-\frac{1}{q_n\log ^{7/8}q_n},\frac{1}{q_n\log ^{7/8}q_n}]).
\end{equation}
\begin{LEM}{\cite[Lemma 6.4]{Disjoint}}
\label{Denjoy Ostrowski}
Fix any \(M>1/16\). There exists \index{\(\e_0\): Error term in the Denjoy-Koksma estimate} \(\e_0=\e_0(f)\) such that for every \(0<\e<\e_0\) there exists \(m_0\coloneqq m_0(\e)\) such that for all \(n\geq m_0\), if \(\e^4q_n\leq m\leq Mq_{n+1}\) and \(x\notin \Sigma_{n}(M)\), we have 
\begin{equation}
((A_{-}-A_{+})-\e^2)m\log m \leq f'^{(m)}(x)  \leq ((A_{-}-A_{+})+\e^2)m\log m.
\end{equation}
Moreover, for every \(m\in[0,\e^4q_n]\cap \mathbb{Z},\) we have 
\begin{equation}
|f'^{(m)}(x)|<\e^2 q_n \log q_n. 
\end{equation}
\end{LEM}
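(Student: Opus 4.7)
The plan is to derive this estimate via the Ostrowski expansion (\Cref{Classical  Ostrowski expansion}) combined with the single-denominator derivative bound (\Cref{Denjoy-Koksma}), following the standard template for controlling Birkhoff sums of observables with logarithmic singularities.

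First I write the Ostrowski expansion $m = \sum_{i=0}^{k} b_i q_i$ with $q_k \leq m < q_{k+1}$, noting that $k \leq n + O(1)$ because $m \leq M q_{n+1}$. This yields a decomposition
\begin{equation}
\notag
f'^{(m)}(x) = \sum_{i=0}^{k} \sum_{\ell=0}^{b_i - 1} f'^{(q_i)}\bigl(R_\alpha^{J_{i,\ell}} x\bigr),
\end{equation}
where each shift $J_{i,\ell}$ lies in $[0, m] \subset [0, M q_{n+1}]$. Applying \Cref{Denjoy-Koksma} to each block produces the main term $(A_{-}-A_{+}) q_i \log q_i$ plus an error bounded by $C(f) q_i \bigl(1 + 1/I_{i, R_\alpha^{J_{i,\ell}} x}\bigr)$.

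The hypothesis $x \notin \Sigma_n(M)$ is what tames the error terms: by definition of $\Sigma_n(M)$, the orbit segment $\{R_\alpha^j x : 0 \leq j \leq M q_{n+1}\}$ stays outside $[-1/(q_n \log^{7/8} q_n), 1/(q_n \log^{7/8} q_n)]$. Consequently, for every shifted base point $R_\alpha^{J_{i,\ell}} x$ we have $I_{i, R_\alpha^{J_{i,\ell}} x} \geq q_i /(q_n \log^{7/8} q_n)$, so each single-block error is at most $C(f)(q_i + q_n \log^{7/8} q_n)$. Summing over Ostrowski blocks, the main contribution is approximately $(A_{-}-A_{+}) m \log m$: the top-level block ($i = k \approx n$) dominates because $\log q_i \leq \log q_k \asymp \log m$ and $\sum_i b_i q_i = m$. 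The total error is controlled by $C(f) m + C(f)\bigl(\sum_i b_i\bigr) q_n \log^{7/8} q_n$; under the Diophantine hypothesis in $\mathcal{D}'$ the count $\sum_i b_i$ is only polylogarithmic in $q_n$, so the total error is $o(q_n \log q_n)$ and hence less than $\e^2 m \log m$ once $n \geq m_0(\e)$. The second inequality is immediate from the same decomposition: when $m \leq \e^4 q_n$ the main term is already $(A_{-}-A_{+}) m \log m \leq (A_{-}-A_{+}) \e^4 q_n \log q_n$, and the error is bounded identically, so both lie below $\frac{1}{2}\e^2 q_n \log q_n$ for $\e$ small.

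The main obstacle is the careful bookkeeping of errors across Ostrowski levels: lower-level blocks ($i \ll n$) have $I_{i, \cdot}$ potentially as small as $q_i /(q_n \log^{7/8} q_n)$, so the single-block error there can exceed the single-block main term. Making the aggregate error beat the aggregate main term relies both on the precise exponent $7/8$ chosen in the definition of $\Sigma_n(M)$ and on the subexponential growth of $q_{n+1}$ forced by $\mathcal{D}'$, which together bound the number and weight of low-level Ostrowski contributions.
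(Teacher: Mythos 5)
You should first note that the paper never proves this lemma: it is imported, up to a slight rewording, from \cite[Lemma 6.4]{Disjoint} (see the remark immediately following it), so the benchmark is that external proof, which works directly with the structure of the length-\(m\) orbit rather than with a worst-case per-block error. Measured against what a correct proof needs, your sketch has two concrete gaps.

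The first is the error bookkeeping. Your total error bound is \(C(f)m+C(f)\bigl(\sum_i b_i\bigr)q_n\log^{7/8}q_n\), and the conclusion ``hence \(o(q_n\log q_n)\)'' does not follow from it. For \(m\asymp\e^4q_n\) the allowance \(\e^2m\log m\) is only \(\asymp\e^6q_n\log q_n\), while your bound exceeds \(q_n\log q_n\) as soon as the number of Ostrowski blocks exceeds \(\log^{1/8}q_n\); the digit sum \(\sum_ib_i\) is typically far larger than that. Moreover, the polylogarithmic bound on partial quotients that you attribute to \(\mathcal{D}'\) is not there: \(\mathcal{D}'\) only gives \(q_{n+1}<C_\alpha q_n^{1+\tau}\), i.e.\ \(a_{n+1}\) polynomial in \(q_n\); the polylogarithmic control is condition \((\mathcal{D}_2)\) of \Cref{Definition of rotation angle}, which you never invoke. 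The missing idea is that the large term \(q_n\log^{7/8}q_n\) in the per-block estimate of \Cref{Denjoy-Koksma} reflects the single closest visit of that block to the singularity; across blocks these closest visits are distinct, separated orbit points, so this extreme charge may be incurred essentially once, and the remaining blocks must be charged by much smaller, summable quantities. Treating every block with the uniform worst case and multiplying by \(\sum_ib_i\) cannot close the estimate, and your ``moreover'' case inherits exactly the same over-count.

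The second gap is the main-term lower bound. ``The top-level block dominates'' is asserted, not proved, and it is false in the generality you work in: the Ostrowski mass below the top level can be a fixed fraction of \(m\) while \(\log q_i\ll\log m\) there, so \(\sum_ib_iq_i\log q_i\) need not be \((1-o(1))m\log m\). Under the very condition \(\mathcal{D}'\) you invoke this is fatal: if \(q_{n+1}\asymp q_n^{1+\tau}\) and \(m\asymp q_{n+1}/8\), the block main terms total roughly \(m\log q_n\approx\frac{1}{1+\tau}m\log m\), and the deficit \((A_{-}-A_{+})m(\log m-\log q_n)\) could only be recovered from the near-singularity visits that your argument discards into the error. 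To make the block decomposition produce \(m\log m\) one needs \(\log q_i=(1-o(1))\log m\) on every level carrying non-negligible mass, which is exactly what \((\mathcal{D}_2)\) (the condition under which this paper actually applies the lemma) supplies and which you would have to state and use; alternatively one argues on the full orbit of length \(m\), isolating only the single closest point --- which is where the hypothesis \(x\notin\Sigma_{n}(M)\) genuinely enters --- as in the quoted proof. As written, neither route is carried out, so both inequalities of the lemma remain unproved.
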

\begin{RMK}
We slightly modify the statement of the previous lemma, but the proof is the same.
\end{RMK}
\noindent Later, we will pick \(M=\frac{1}{8}\), \(n=n_k\) where \(n_k\) is a large enough number in \Cref{Definition of rotation angle}. An important fact is that we can apply \Cref{Denjoy Ostrowski} to \(x\) in an almost full-measure set. Let
\[B_n\coloneqq\{x\in \mathbb{T}: x-q_n\alpha,...,x,...,x+(q_n-1)\alpha \notin (-\frac{1}{q_n \log^{7/8}q_n},\frac{1}{q_n \log^{7/8}q_n})\},\]
the asymmetry is related to definition \eqref{Definition of flow}.
\index{\(\N_1(\e)\)}
\noindent Given \(\e>0,\) there exists \(\N_1(\e)\in \mathbb{N}\), such that the following set
\index{\(Z(\e)\): Switchable Ratner set}
\begin{equation}
\label{Switchable Ratner set}
 Z(\e)\coloneqq \bigcap_{n\geq \N_1(\e),n\notin K_{\alpha}} B_n   
\end{equation}
has measure \(\lambda(Z(\e))\geq 1-\e\) and \(\mu^f(Z(\e)^f)\geq 1-\e\) where \(K_{\alpha}\) is as in \Cref{Definition of rotation angle}. We call \(Z(\e)\) and \(Z(\e)^f\)\textit{switchable Ratner sets}. In \cite{Multiple}, it is proved that for \(x\in Z(\e)\), at least one of the forward and backward orbits of \(x\) with a length greater than \(\frac{q_{n+1}}{8}\) is \(\frac{1}{q_n\log^{7/8}q_n}\) away from the singularity.
  \index{\(E_n\): The set such that orbits are away from singularities in both directions}
Denote \(E_{n}\) to be the set such that for every \(y\in E_{n}\),
\begin{equation}
 \left(\bigcup_{i=-4[q_{n}\log ^{\frac{1}{2}} q_{n}]}^{4[q_{n}\log ^{\frac{1}{2}}q_{n}]} R_{\alpha}^i y \right) \cap [-\frac{1}{q_{n}\log^{\frac{7}{8}} q_{n}},\frac{1}{q_{n}\log^{\frac{7}{8}} q_{n}}]=\emptyset.
\end{equation}
 \\
\(\textit{Observation 1:}\) When the distance between \(x\) and \(y\) in \eqref{Birkhoff differences} is small, we can apply the derivative estimate to obtain the divergence of Birkhoff sums. The derivative estimate applies even if \(x\) and \(y\) are in the same orbit.\\
\(\textit{Observation 2:}\) When the distance between \(x\) and \(y\) is arbitrary. If \(x=y+k\alpha\), by taking \(q_n\) much larger compared to \(k\), then the Birkhoff sum difference \(|f^{(q_n)}(x)-f^{(q_n)}(y)|\) is very small. When \(x\) and \(y\) are not in the same orbit, in \Cref{Small Shearing} we show that there is a sequence of \(\ell_nq_n\), such that the Birkhoff sum difference \(|f^{(\ell_nq_n)}(x)-f^{(\ell_nq_n)}(y)|\) is in a given compact set bounded away from 0 where \(\ell_n\) is some number less than \(\frac{q_{n+1}}{q_n}.\)

%\(\textit{Observation 3:}\) Due to asymmetric singularities, the sign of \Cref{Birkhoff differences} is determined by the initial position of \(x\) and \(y\) when \(\|x-y\|\) is small.\\

We will use the above observations to obtain the divergence of the Birkhoff sums, thereby completing the first step.
\subsection{Proof of Step 1}
Firstly, we will show the small shearing case where \((x,y)\) is a small pair or a close pair. For the small pair case, it uses the argument in \cite{Multiple} and \cite{Disjoint}. For the readers' convenience, we provide a proof here. The main difference is that we only need to show the divergence for special times and apply the argument for a pair of long-time segments. 
\index{\(\N_2(\e,\zeta)\)}
\begin{LEM}[Small Shearing]
\label{Small Shearing}
Let \(0<\e<\frac{\min\{H,\e_0\}}{100}\) and \(0<\zeta\leq\frac{\e}{10}\) be given. There exists  \(\N_2(\e,\zeta)\in \mathbb{N},\) such that if \((x,y)\) is a good pair of order \(k\), \(x\in Z(\e)\) and \(y\in E_{n_k}\) with \(k\geq \N_2(\e,\zeta)\), then there exist \(\ell_0\in \mathbb{Z}\), \(m\geq n_k\) and \(p\in P\) depending on \(x,y\) such that 
\begin{equation}
|f^{(\ell_0 q_m)}(x)-f^{(\ell_0 q_m)}(y)+p|\leq \zeta.
\end{equation}
Recall that \(P\subset \mathbb{R}\setminus \{0\}\) is defined in \eqref{The compact set} and \(n_k\) is in \Cref{Definition of rotation angle}.
\end{LEM}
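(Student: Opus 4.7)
I would write $y':=R_\alpha^{j_0}y$ where $j_0\in(-q_{n_k},q_{n_k})$ realizes $d(x,R_\alpha^{j_0}y)=d_k$, and decompose via the cocycle identity:
\begin{equation*}
f^{(N)}(x)-f^{(N)}(y)=\bigl[f^{(N)}(x)-f^{(N)}(y')\bigr]+\bigl[f^{(j_0)}(R_\alpha^{N}y)-f^{(j_0)}(y)\bigr].
\end{equation*}
The main term is handled by the mean value theorem and Lemma \ref{Denjoy Ostrowski}: since $d(x,y')=d_k$ is small, there exists $\xi$ between $x$ and $y'$ with $f^{(N)}(x)-f^{(N)}(y')=f'^{(N)}(\xi)(x-y')$, and for $|N|=|\ell_0 q_m|\in[\e^{4}q_m,q_{m+1}/8]$ with $\xi\notin\Sigma_m(1/8)$, Lemma \ref{Denjoy Ostrowski} gives $f'^{(N)}(\xi)=(A_{-}-A_{+}+O(\e^2))\,|N|\log|N|$. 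The hypothesis $x\in Z(\e)$ together with the switchable Ratner property selects a sign of $\ell_0\in\mathbb{Z}$ (forward or backward) along which the relevant semi-orbit of $x$ avoids the singularity neighborhood $[-1/(q_{n_k}\log^{7/8}q_{n_k}),1/(q_{n_k}\log^{7/8}q_{n_k})]$ for $q_{n_k+1}/8$ base iterations, so the same holds for the intermediate point $\xi$.

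\textbf{Parameter selection.} I would then pick $(m,\ell_0)$ so that
\begin{equation*}
(A_{-}-A_{+})\,|\ell_0 q_m|\log|\ell_0 q_m|\cdot d_k\in[H/100,H]\cup[-H,-H/100]\subset P,
\end{equation*}
with a buffer of at least $2\zeta$ away from the endpoints of $P$. For the close pair case ($d_k\in[\tfrac{5}{6\r},\tfrac{1}{q_{n_k}\log q_{n_k}})$) I take $m=n_k$; condition $(\mathcal{D}_3)$ makes the admissible range $[1,q_{n_k+1}/(8q_{n_k})]$ for $\ell_0$ large enough to hit the target window. For the small pair case ($d_k\in(0,\tfrac{1}{\r\log\r}]$) I walk up the Diophantine tower: choose the smallest $m\geq n_k$ for which $d_k\leq H/((A_{-}-A_{+})q_m\log q_m)$ and $d_k\geq 8H/((A_{-}-A_{+})q_{m+1}\log q_m)$. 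Condition $(\mathcal{D}_2)$ ensures that as $m$ increases, consecutive admissible $d_k$-windows overlap, so some such $m$ exists.

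\textbf{Boundary term.} The hypothesis $y\in E_{n_k}$ forces $\{R_\alpha^i y\}_{|i|\leq 4q_{n_k}\log^{1/2}q_{n_k}}$ to avoid the singularity neighborhood, hence by another application of the mean value theorem together with the derivative bound $|f'^{(j_0)}|\leq C\,q_{n_k}\log q_{n_k}$ on this set,
\begin{equation*}
|f^{(j_0)}(R_\alpha^{N}y)-f^{(j_0)}(y)|\leq C\,q_{n_k}\log q_{n_k}\cdot\|N\alpha\|\leq \frac{C\,q_{n_k}\log q_{n_k}}{8q_m},
\end{equation*}
using $\|N\alpha\|\leq|\ell_0|/q_{m+1}\leq 1/(8q_m)$. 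Taking $k\geq\N_2(\e,\zeta)$ large enough (possibly forcing $m$ strictly larger than $n_k$) makes the boundary contribution below $\zeta/2$.

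\textbf{Main obstacle.} The technical heart is the simultaneous satisfaction of three constraints: (i) $|\ell_0 q_m|\in[\e^4 q_m,q_{m+1}/8]$ so that Lemma \ref{Denjoy Ostrowski} applies; (ii) the main-term quantity $(A_{-}-A_{+})|\ell_0 q_m|\log|\ell_0 q_m|\cdot d_k$ lands inside $P$ with a margin of $\zeta$; and (iii) the boundary term is below $\zeta/2$. Constraints (ii) and (iii) pull $m$ in opposite directions, since (iii) prefers $m\gg n_k$ while (ii) restricts which $m$ is compatible with the given $d_k$. The Diophantine conditions $(\mathcal{D}_2)$ and $(\mathcal{D}_3)$ are used precisely to show that these three constraints can be met uniformly across the entire range of $d_k$ allowed by the good pair hypothesis.
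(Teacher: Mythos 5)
Your decomposition and your treatment of the main term follow the paper's route (the same cocycle splitting into a term $I_\ell=f^{(\ell q_m)}(x)-f^{(\ell q_m)}(y+j_0\alpha)$ and a boundary sum over the $j_0$ intermediate points, the same mean-value/Denjoy--Koksma estimates, iteration over multiples of $q_m$, and the $Z(\e)$/SWR choice of the forward or backward direction), but your handling of the boundary term fails precisely in the close-pair case $d_k\in[\tfrac{5}{6q_{n_k+1}},\tfrac{1}{q_{n_k}\log q_{n_k}})$. There you are forced to take $m=n_k$: taking $m>n_k$ makes the main term already at $\ell_0=1$ of size roughly $(A_{-}-A_{+})q_{m}\log q_{m}\,d_k\gtrsim (A_{-}-A_{+})\log q_{n_k+1}$, far above $P$, so you cannot ``force $m$ strictly larger than $n_k$''. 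With $m=n_k$, your own bound for the boundary term is $C\,q_{n_k}\log q_{n_k}\cdot\|\ell_0 q_{n_k}\alpha\|\le C\,\ell_0\,q_{n_k}\log q_{n_k}/q_{n_k+1}$, and for the values of $\ell_0$ (up to about $q_{n_k+1}/(16q_{n_k})$) needed to reach the target window this is of order $\log q_{n_k}$, nowhere near $\zeta/2$, and it cannot be made small by taking $k$ large. The paper does something genuinely different here: it does not make $J_\ell$ negligible but proves the ratio bound $|I_\ell|\ge \tfrac54 |J_\ell|$, splitting into $j_0\le q_{n_k}/2$ and, when $j_0>q_{n_k}/2$, re-centering at $y-j_1\alpha$ with $j_1=q_{n_k}-j_0$ so that $\|x-(y-j_1\alpha)\|\ge d_k\ge \tfrac{5}{6q_{n_k+1}}$; the total difference then still grows essentially linearly in $\ell$ with bounded increments and must pass through the compact window. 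Note also that even in the small-pair case the smallness of the boundary term is not a consequence of ``$k$ large'' alone: it uses $(\mathcal{D}_3)$ at $n_k$, i.e. $q_{n_k}\log q_{n_k}/q_{n_k+1}\le 1/\log\log q_{n_k}$, which you never invoke for that purpose.

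There are also two calibration problems in your parameter selection. First, the window $[H/100,H]$ is narrower than the maximal increment $(A_{-}-A_{+}+2\e)q_m\log q_m\,d_k$, which can be close to $A_{-}-A_{+}>H$ when $d_k$ is near $\tfrac{1}{q_m\log q_m}$; in that regime the value at $\ell_0=1$ may already exceed $H$, so your window can be missed entirely. Second, your admissible $d_k$-windows $\bigl[\,8H/((A_{-}-A_{+})q_{m+1}\log q_m),\,H/((A_{-}-A_{+})q_m\log q_m)\,\bigr]$ do not overlap as $m$ varies: the overlap condition reduces to $8\log q_{m+1}\le \log q_m$, which never holds, so $(\mathcal{D}_2)$ does not give the covering you assert. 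Both issues are resolved as in the paper: fix $m$ by $\tfrac{1}{q_{m+1}\log q_{m+1}}\le d_k<\tfrac{1}{q_m\log q_m}$ (these ranges tile all small $d_k$), use the wider window $[\tfrac{A_{-}-A_{+}}{64},\,2(A_{-}-A_{+})]\subset P$ whose width exceeds the step size, and use $(\mathcal{D}_2)$ only to show that at $\ell_{\max}\approx q_{m+1}/(16q_m)$ the main term is at least $(A_{-}-A_{+})\log q_m/(16\log q_{m+1})\ge (A_{-}-A_{+})/32$, so the sequence necessarily enters the window.
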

\begin{proof}
There exists a unique \(m\in \mathbb{N},\) such that  \[\frac{1}{q_{m+1}\log q_{m+1}}\leq d_k<\frac{1}{q_{m}\log q_{m}}.\]
By definition, \(m\geq n_k.\) \\
\(\textit{Case 1:}\) when \((x,y)\) is a small pair, \(m\geq n_k+1.\)
Suppose \(q_{m+1} \geq 2 q_m\) first, then since \(x\in Z(\e),\) we know at least one of the following holds:
\begin{equation}
\label{Forward case a}
\tag{a}
\bigcup_{j=0}^{\max([\frac{q_{m+1}}{8},q_m])} R_{\alpha}^j x \cap [-\frac{1}{4q_m \log^{7/8}q_m},\frac{1}{4q_m \log^{7/8}q_m}]=\emptyset  
\end{equation}
and
\begin{equation}
\label{Backward case b}
\tag{b} 
\bigcup_{j=1}^{\max([\frac{q_{m+1}}{8},q_m])} R_{\alpha}^{-j} x \cap [-\frac{1}{4q_m \log^{7/8}q_m},\frac{1}{4q_m \log^{7/8}q_m}]=\emptyset.
\end{equation}
To see this, if \(m \in K_{\alpha},\) then by the definition of \(K_{\alpha}\), 
\(q_{m+1}<q_m\log^{\frac{7}{8}} q_m.\) There is at most one \(j_0\in[-\frac{q_{m+1}}{4}, \frac{q_{m+1}}{4}]\cap \mathbb{Z}\) so that \(x+j_0\alpha\in [-\frac{1}{4q_{m+1}}, \frac{1}{4q_{m+1}}].\) Note that \(\frac{1}{q_{m+1}}>\frac{1}{q_m \log^{7/8}q_m}.\) If \(j_0<0\), then \eqref{Forward case a} holds. If \(j_0\geq 0\) or does not exist, then \eqref{Backward case b} holds. If \(m\notin K_{\alpha},\) then we can use the definition of \(Z(\e)\):
\(x+j\alpha \notin(-\frac{1}{q_m \log^{7/8}q_m},\frac{1}{q_m \log^{7/8}q_m})\) for every \(-q_m\leq j\leq q_m-1.\) Let \(j_0\) be the index where \(\|x+j_0\alpha\|\) is the minimal among \(\|x+j\alpha\|\) where \(-q_m\leq j\leq q_m-1.\) If \(\|x+j_0\alpha\|<\|x+(j_0+q_m)\alpha\|,\) then we consider the backward orbit case. Note again there is at most one \(j\in [-q_m,-1]\cap \mathbb{Z}\) so that \(x+j\alpha\in [-\frac{1}{4q_m},\frac{1}{4q_m}].\) By iterating the backward orbits \([\frac{q_{m+1}}{8q_m}]\) times, then \eqref{Backward case b} holds. Similarly, if \(\|x+j_0\alpha\|\geq \|x+(j_0+q_m)\alpha\|,\) then \eqref{Forward case a} holds.\\
Assume without loss of generality, suppose the forward orbit case \eqref{Forward case a} is true.
Let \(j_0\) be the index such that \(\|x-(y+j_0\alpha)\|=d_k\) achieves the minimal distance. Assume \(j_0\geq 0\) first, then we have
\begin{equation}
\label{Basic estimate}
\begin{aligned}
    f^{(q_m)}(x)-f^{(q_m)}(y)=&f^{(q_m)}(x)-f^{(q_m)}(y+j_0\alpha)\\
    &+\sum_{i=0}^{j_0-1} f(y+i\alpha+q_m\alpha)-f(y+i\alpha).\\
\end{aligned}   
\end{equation}
If \eqref{Backward case b} holds, recall definition in \eqref{Definition of flow}, then we have
\begin{equation}
\begin{aligned}
    f^{(-q_m)}(x)-f^{(-q_m)}(y)=&\sum_{i=1}^{q_m}f(y+(j_0-1)\alpha-i\alpha)-f(x-i\alpha)\\
    &+\sum_{i=0}^{j_0-1} f(y+i\alpha)-f(y+i\alpha-q_m\alpha).\\
\end{aligned} 
\end{equation}
Let's denote \[I_1\coloneqq f^{(q_m)}(x)-f^{(q_m)}(y+j_0\alpha) \] and 
\[J_1\coloneqq \displaystyle \sum_{i=0}^{j_0-1} f(y+i\alpha+q_m\alpha)-f(y+i\alpha).\]
If \(j_0=0\), then we set \(J_1=0.\)
By \Cref{Denjoy-Koksma} and the mean value theorem, we have:
\begin{equation}
\label{Estimate of I_1}
(A_{-}-A_{+}-\e) q_m \log q_m d_k\leq |I_1
|\leq (A_{-}-A_{+}+\e) q_m \log q_m d_k.
\end{equation}
It might be the case that \(d_k\) is much smaller compared to \(\frac{1}{q_m \log q_m}\), however, we can use the SWR property to iterate the above estimate \eqref{Estimate of I_1} to obtain a difference away from zero.
That is, for every \(\ell\in \{1,2,..., \max\{\frac{q_{m+1}}{16q_m},1\}\}, \) we have
\begin{equation}
\begin{aligned}
    f^{(\ell q_m)}(x)-f^{(\ell q_m)}(y)=&f^{(\ell q_m)}(x)-f^{(\ell q_m)}(y+j_0\alpha)\\
    &+\sum_{i=0}^{j_0-1} f(y+i\alpha+\ell q_m\alpha)-f(y+i\alpha).\\
\end{aligned}   
\end{equation}
Similarly, the estimate for \(I_{\ell}\coloneqq f^{(\ell q_m)}(x)-f^{(\ell q_m)}(y+j_0\alpha) \) is:
\[(A_{-}-A_{+}-2\e) \ell q_m \log  q_m d_k\leq |I_{\ell}| \leq (A_{-}-A_{+}+2\e) \ell q_m \log  q_m d_k.\]
Therefore, there exists \(\ell_0\in \{1,2,..., \max\{\frac{q_{m+1}}{16q_m},1\}\}\), such that 
\begin{equation}
\label{Estimate of I}
    |I_{\ell_0}|\in[\frac{A_{-}-A_{+}}{64}, 2(A_{-}-A_{+})].
\end{equation}
 For the sum of the remaining terms \(J_{\ell}\coloneqq \displaystyle\sum_{i=0}^{j_0-1} f(y+i\alpha+\ell q_m\alpha)-f(y+i\alpha)\), recall from our assumption on set \(E_{n_k}\):  

\begin{equation}
\label{Bothside}
    \left(\bigcup_{i=0}^{j_0} R_{\alpha}^i [y,y+\ell q_m\alpha] \right) \cap [-\frac{1}{4q_{n_k} \log^{7/8} q_{n_k}},\frac{1}{4q_{n_k}\log ^{7/8} q_{n_k}}]=\emptyset.
\end{equation}
With above \eqref{Bothside}, the fact that \[\|\ell q_m\alpha\|\leq \frac{1}{q_{m}}\leq \frac{1}{q_{n_k+1}}\] and \(j_0\leq q_{n_k},\) we know the conditions for \Cref{Denjoy  Ostrowski} are satisfied. Therefore, for every \(\ell\in  \{1,2,..., \max\{\frac{q_{m+1}}{16q_m},1\}\}, \) there exists \(\theta_{\ell}\in[y,y+\ell q_m \alpha]\) such that 
\begin{equation}
\label{Estimate of J}
\begin{aligned}
    |J_{\ell}|&=\left|\displaystyle\sum_{i=0}^{j_0-1} f'(\theta_{\ell}+i\alpha)\|\ell q_m \alpha\|\right|\\
    &\leq  (H+1) j_0 \log j_0 \frac{1}{\r}\\
    &\leq \zeta/5
\end{aligned}
\end{equation}
as long as \(\frac{H+1}{\log \log (\tilde{N_2})} \leq \frac{\zeta}{10}.\)
\noindent Combine \eqref{Estimate of I} and \eqref{Estimate of J}, \(\exists \ell_0 \leq \max\{\frac{q_{m+1}}{16q_m},1\} \) and \(p\in P\) such that 
\begin{equation}
|f^{(\ell_0 q_m)}(x)-f^{(\ell_0 q_m)}(y)+p|\leq \zeta.
\end{equation}\\
When \(q_{m+1}< 2 q_m,\) then \[d_k \geq \frac{1}{2 q_m \log 2q_m} > \frac{1}{3q_m \log q_m}.\]
Then we replace the above proof by considering orbits of length \(q_{m-1}\) because \(q_{m+1}\geq 2 q_{m-1}\) and either forward orbits or backward orbits of length \(q_{m-1}\) are away from the singularity. Therefore, we complete the proof of this case.
When \(j_0<0\), then we can write
\begin{equation}
\begin{aligned}
    f^{(q_m)}(x)-f^{(q_m)}(y)=&f^{(q_m)}(x)-f^{(q_m)}(y+j_0\alpha)\\
    &+\sum_{i=j_0}^{-1} f(y+i\alpha+q_m\alpha)-f(y+i\alpha).\\
\end{aligned}   
\end{equation}\\
Iterating the above estimate, we obtain the following:
\begin{equation}
\begin{aligned}
    f^{(\ell q_m)}(x)-f^{(\ell q_m)}(y)=&f^{(\ell
     q_m)}(x)-f^{(\ell q_m)}(y+j_0\alpha)\\
    &+\sum_{i=j_0}^{-1} f(y+i\alpha+\ell q_m\alpha)-f(y+i\alpha).\\
\end{aligned}   
\end{equation}\\
The rest of the proof follows similarly as before.\\
\textit{Case 2:} when \((x,y)\) is a close pair, then we have \(\frac{5}{6\r } \leq d_k < \frac{1}{q_{n_k} \log q_{n_k}}\).\\
Suppose \(j_0\leq q_{n_k}/2,\) by \Cref{Denjoy  Ostrowski} and mean value theorem again, we have
\begin{equation}
(A_{-}-A_{+}-2\e)j_0 \log j_0 \|\ell q_{n_k} \alpha\|\leq |J_\ell|\leq (A_{-}-A_{+}+2\e)j_0\log j_0 \|\ell q_{n_k} \alpha\| 
\end{equation}
and 
\begin{equation}
(A_{-}-A_{+}-2\e) \ell q_{n_k} (\log \ell q_{n_k}) d_k\leq |I_{\ell}| \leq (A_{-}-A_{+}+2\e) \ell q_{n_k} (\log \ell q_{n_k}) d_k. 
\end{equation}
Observe that the growth rate of \(I_{\ell}\) and \(J_{\ell}\) are almost linear with respect to \(\ell\) because \(\ell \leq \frac{\r}{16q_{n_k}} \leq \log ^2q_{n_k}.\)
By our assumption on \(d_k\) and \(j_0\leq q_{n_k}/2\), we know \(|I_{\ell}| \geq \frac{5}{4} |J_{\ell}|\), the rest of proof is due to \cite[Lemma 4.8]{Multiple} again.\\
Suppose \(j_0> q_{n_k}/2.\) Let \(j_1=q_{n_k}-j_0\), consider 
\[J_{\ell}'=\sum_{i=1}^{j_1} f(y-i\alpha+\ell q_{n_k}\alpha)-f(y-i\alpha),\]
and 
\[I_{\ell}'=f^{(\ell q_{n_k})}(x)-f^{(\ell q_{n_k})}(y-j_1 \alpha).\]
Notice that \(\|x-(y-j_1\alpha)\| \geq d_k \geq \frac{5}{6q_{n_k+1}}\) and \(f^{(\ell q_{n_k})}(x)-f^{(\ell q_{n_k})}(y)= I_{\ell}'+J_{\ell}'.\) Since \(y\in E_{n_k}, \) as what we have shown before, we have 
\begin{equation}
(A_{-}-A_{+}-2\e)j_1 \log j_1 \|\ell q_{n_k} \alpha\|\leq |J_\ell'|\leq (A_{-}-A_{+}+2\e)j_1\log j_1 \|\ell q_{n_k} \alpha\| 
\end{equation}
and 
\begin{equation}
(A_{-}-A_{+}-2\e) \ell q_{n_k} (\log \ell q_{n_k}) \|x-(y-j_1\alpha)\|\leq |I_{\ell}'| \leq (A_{-}-A_{+}+2\e) \ell q_{n_k} (\log \ell q_{n_k}) \|x-(y-j_1\alpha)\|. 
\end{equation}
The above estimates imply that \(|I_{\ell}'|\geq \frac{5}{4}|J_{\ell}'|.\) So we complete the proof similarly as before.\\
\end{proof}
From \Cref{Small Shearing}, we can see the divergence of Birkhoff sums at time \(\ell_0q_m\).
We now claim that this estimate is preserved for relatively long orbits of \(x\) and \(y\), from which we can apply the ergodic theory later.
\begin{LEM}
\label{Small shearing preserving}
Under assumptions in \Cref{Small Shearing}. For \(i\leq |\ell_0 q_m|\) and \(|i-j|\leq 10 |\ell_0 q_m|^{1/5}\),  we have
\[|f^{(\ell_0 q_m)}(x+i\alpha)-f^{(\ell_0 q_m)}(y+j \alpha )+p|\leq \zeta, \hspace{5mm} \textit{\;if\;} \ell_0>0\]

or 
\[|f^{(\ell_0 q_m)}(x-i\alpha)-f^{(\ell_0 q_m)}(y-j \alpha )+p|\leq \zeta,  \hspace{5mm} \textit{\;if\;}  \ell_0<0.\]
\end{LEM}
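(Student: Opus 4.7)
My plan is to view the asserted inequality as a controlled perturbation of the bound already established in \Cref{Small Shearing}. Set $N=\ell_0 q_m$ and assume $\ell_0>0$; the $\ell_0<0$ case is handled symmetrically using backward Birkhoff sums. By tightening the parameters in the proof of \Cref{Small Shearing}, I may assume the strengthened bound $|f^{(N)}(x)-f^{(N)}(y)-p|\leq \zeta/2$, so it suffices to show that
\[
|E|\leq \zeta/2, \qquad E:=[f^{(N)}(x+i\alpha)-f^{(N)}(x)]-[f^{(N)}(y+j\alpha)-f^{(N)}(y)].
\]

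The central tool is the cocycle identity $f^{(N)}(z+a\alpha)-f^{(N)}(z)=f^{(a)}(z+N\alpha)-f^{(a)}(z)$, valid for $N,a\geq 0$, which converts a horizontal shift by $a$ into a Birkhoff sum difference evaluated at a much smaller vertical shift of magnitude $\|N\alpha\|\leq \ell_0/q_{m+1}\leq 1/(16q_m)$. Applying it to both brackets gives $E=[f^{(i)}(x+N\alpha)-f^{(i)}(x)]-[f^{(j)}(y+N\alpha)-f^{(j)}(y)]$. I then split $E=E_1+E_2$ where
\[
E_2=f^{(i-j)}(x+j\alpha+N\alpha)-f^{(i-j)}(x+j\alpha)
\]
absorbs the length discrepancy and
\[
E_1=[f^{(j)}(x+N\alpha)-f^{(j)}(x)]-[f^{(j)}(y+N\alpha)-f^{(j)}(y)]
\]
is a pure double difference in the base point.

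Bounding $E_2$ is direct: since $|i-j|\leq 10 N^{1/5}$ and $x\in Z(\epsilon)$ forces the forward orbit of $x$ of length $q_{m+1}/8\geq N$ to avoid $[-(q_m\log^{7/8}q_m)^{-1},(q_m\log^{7/8}q_m)^{-1}]$, the forward orbit of $x+j\alpha$ of length $|i-j|$ also avoids this neighborhood. The mean value theorem and \Cref{Denjoy Ostrowski} give $|E_2|\leq \|N\alpha\|\cdot((A_{-}\!-\!A_{+})+\epsilon^2)\,|i-j|\log|i-j|$, which tends to $0$ by the diophantine bound $q_{m+1}\leq C_\alpha q_m\log^{3/2}q_m$.

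The main obstacle is $E_1$: each of its two brackets can individually be as large as $\epsilon^2\log^{5/2}q_m$ by \Cref{Denjoy Ostrowski}, so a triangle-inequality bound will blow up and genuine cancellation must be exploited. I would write $E_1=\int_{0}^{\|N\alpha\|}[f'^{(j)}(x+s)-f'^{(j)}(y+s)]\,ds$ via the antiderivative, apply \Cref{Denjoy Ostrowski} to both integrands on their respective orbits (both lie in the good region by $x\in Z(\epsilon)$ and $y\in E_{n_k}$, with a mild strengthening to control length-$N$ orbits of $y$ using the full-measure set $O(\epsilon)$), and note that the leading $(A_{-}\!-\!A_{+})j\log j$ contributions cancel between the two. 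A careful tracking of the residuals, using that the error term $\epsilon^2 j\log j$ in \Cref{Denjoy Ostrowski} comes from localized contributions near the singularity that appear at matching orbit times for $x$ and $y$ under the good-orbit assumptions, produces a bound $|E_1|\leq C\epsilon\cdot\|N\alpha\|\cdot j\log j\leq C\epsilon$, which can be made less than $\zeta/4$ by choosing $\epsilon$ sufficiently small and $k$ sufficiently large. Combining with the bound on $E_2$ yields $|E|\leq \zeta/2$, completing the proof.
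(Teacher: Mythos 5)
Your reduction to the double difference $E=[f^{(N)}(x+i\alpha)-f^{(N)}(x)]-[f^{(N)}(y+j\alpha)-f^{(N)}(y)]$ and the use of the cocycle identity are fine, but the way you estimate $E_1$ contains a genuine gap. You pair the orbit of $x$ with the orbit of $y$ at \emph{the same} indices ($x+r\alpha$ against $y+r\alpha$, $r<j$). The hypotheses of \Cref{Small Shearing} only say that $x$ is $d_k$-close to $y+j_0\alpha$ for some $|j_0|<q_{n_k}$; the points $x$ and $y$ themselves can be far apart. Consequently the two orbits approach the singularity at \emph{mismatched} times (offset by roughly $j_0$), so your key assertion that the ``localized contributions near the singularity appear at matching orbit times for $x$ and $y$'' and hence cancel is unfounded: a single summand $f(x+r\alpha+N\alpha)-f(x+r\alpha)$ with $x+r\alpha$ at distance $\approx 1/(q_m\log^{7/8}q_m)$ from $0$ and $\|N\alpha\|\approx 1/(16q_m)$ is of size $\approx \tfrac{7}{8}A_{-}\log\log q_m$ and has no partner on the $y$-side under your pairing. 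The fallback bound you actually derive from \Cref{Denjoy Ostrowski}, namely $|E_1|\le 2\epsilon^2\, j\log j\,\|N\alpha\|$, is not small either: with $j\le N=\ell_0 q_m$, $\|N\alpha\|\le 1/(16q_m)$ and $\ell_0$ as large as $q_{m+1}/(16 q_m)$ (which by $(\mathcal{D}_2)$--$(\mathcal{D}_3)$ in \Cref{Definition of rotation angle} is at least of order $\log q_{n_k}\log\log q_{n_k}$ along the relevant subsequence), this quantity can be of order $\epsilon^2\log^{5/2}q_m$, so the claimed conclusion $|E_1|\le C\epsilon$ does not follow. A further problem is that applying \Cref{Denjoy Ostrowski} to $f'^{(j)}(y+s)$ with $j\approx \ell_0 q_m$ requires control of the orbit of $y$ up to length $\sim q_{m+1}$, whereas the hypotheses only give $y\in E_{n_k}$ (length $4q_{n_k}\log^{1/2}q_{n_k}$); your ``mild strengthening'' via $O(\epsilon)$ does not supply this, since $O(\epsilon)$ is designed for comparing Birkhoff sums with time, not for derivative sums at scale $q_m$. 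Finally, your bound on $E_2$ also cites \Cref{Denjoy Ostrowski} outside its range: for $m=|i-j|\le 10N^{1/5}\ll \epsilon^4 q_m$ only the second clause applies, giving $\epsilon^2 q_m\log q_m\cdot\|N\alpha\|\approx \epsilon^2\log q_m/16$, not the $|i-j|\log|i-j|$ bound you state (this piece is repairable by a direct count of near-singularity visits, but not as written).

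The paper's proof goes differently and avoids these issues: it reruns the decomposition of \Cref{Small Shearing} for the shifted pair $(x+i\alpha,\,y+j\alpha)$, anchored at the closest-approach index. Since $y+j\alpha+(j_0+i-j)\alpha$ is at the same distance $d_k$ from $x+i\alpha$, the $I$-term is again a length-$\ell_0 q_m$ Birkhoff difference across a gap of size $d_k$, estimated by the same derivative bounds (the relevant orbit of $x+i\alpha$ still lies inside the singularity-free stretch of length $q_{m+1}/8$ because $i+\ell_0q_m\le 2\ell_0q_m$), while the $J$-term changes only by $|i-j|\le 10(\ell_0q_m)^{1/5}$ extra summands, which is negligible against $\ell_0 q_m$ --- this is what ``comparing the growth rate of $I_\ell$ and $J_\ell$'' means there. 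In particular all comparisons on the $y$-side are routed through the $d_k$-closeness to the orbit of $x$ rather than through independent derivative estimates along the far orbit of $y$. If you want to keep your double-difference formulation, you must at least re-anchor the pairing as $x+r\alpha\leftrightarrow y+(j_0+r)\alpha$ and control the leftover $J$-type blocks the way the paper does; as it stands, the central cancellation step of your argument fails.
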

\begin{proof} 
Since \(|i-j|\leq 10|\ell_0 q_m|^{1/5}\), which is much smaller compared to \(\ell_0 q_m\) , the proof of this lemma is similar to \Cref{Small Shearing} by comparing the growth rate of \(I_{\ell}\) and \(J_{\ell}\).
\end{proof}
From the small shearing estimate, we can control orbits of two points of \(x\) and \(y\) when \((x,y)\) is a good pair. We want to stress that we can only obtain the Birkhoff sums estimates for special times compared to the Birkhoff sums estimates in \cite{Multiple} and \cite{Disjoint}. However, we require the estimates for special times to be true for every point in a ``long'' orbit of \(x\) and \(y\), which follows from \Cref{Small shearing preserving}. We will use the small shearing estimates \Cref{Small Shearing} and \Cref{Small shearing preserving} in \Cref{Small shear flow} later. As mentioned before, the more difficult case is when \((x,y)\) is not a good pair, which makes the Birkhoff sum differences arbitrarily large. First, we want to show a good upper bound for \(d_k.\)
\begin{LEM}{\cite[Lemma 2.3]{Disjoint}}
\label{Space of orbits}
For any \(x\in\mathbb{T}\) and \(n\in \mathbb{N},\) the orbit \(\{x+i\alpha, 0\leq i <q_n\}\) is such that 
\[\|x+i\alpha-(x+j\alpha)\|\geq \frac{1}{2q_n},\]
for all \(0\leq i\neq j <q_n.\) In each interval of length \(\frac{2}{q_n}\), there must be a point of the form \(x+j\alpha\) for some \(0\leq j <q_n.\)
\end{LEM}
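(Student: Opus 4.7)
The plan is to prove the two assertions separately, using only standard properties of best rational approximations together with the estimate \eqref{special time estimate} already quoted in the paper.

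\textbf{Lower bound on pairwise distances.} Writing $k=i-j$, the first claim reduces to showing $\|k\alpha\|\geq \frac{1}{2q_n}$ for every integer $k$ with $1\leq |k|<q_n$. By the best-approximation property of continued fraction denominators, the minimum of $\|k\alpha\|$ over $0<k<q_n$ is attained at $k=q_{n-1}$. Combining this with \eqref{special time estimate} applied at index $n-1$,
\[
\|k\alpha\|\ \geq\ \|q_{n-1}\alpha\|\ >\ \frac{1}{q_{n-1}+q_n}\ \geq\ \frac{1}{2q_n},
\]
where the last inequality uses $q_{n-1}\leq q_n$. Negative $k$ is handled identically by symmetry.

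\textbf{Every arc of length $2/q_n$ hits an orbit point.} Equivalently, no gap between consecutive orbit points (in the circular ordering on $\mathbb{T}$) can have length $\geq \frac{2}{q_n}$. To see this I would invoke the three-distance (Steinhaus) theorem at $N=q_n$: the $q_n$ orbit points partition $\mathbb{T}$ into arcs of at most two distinct lengths, namely $\|q_{n-1}\alpha\|$ and $\|q_{n-1}\alpha\|+\|q_n\alpha\|$. Hence the maximum gap is at most
\[
\|q_{n-1}\alpha\|+\|q_n\alpha\|\ <\ \frac{1}{q_n}+\frac{1}{q_{n+1}}\ \leq\ \frac{2}{q_n},
\]
using \eqref{special time estimate} at indices $n-1$ and $n$ together with $q_{n+1}\geq q_n$.

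\textbf{The main obstacle.} The first claim does not imply the second by a naive covering argument: if one supposed an empty arc $I$ of length $2/q_n$ and considered the $q_n$ translates $R_\alpha^i(I)$ for $0\leq i<q_n$, their total length would be $2$, so two of them would overlap, forcing $\|(i-j)\alpha\|< 2/q_n$ for some distinct $i,j$. This is perfectly consistent with the $\geq 1/(2q_n)$ bound from the first part, so no contradiction arises. The three-distance theorem is precisely what removes this slack, rigidly pinning the possible gap lengths to combinations of $\|q_{n-1}\alpha\|$ and $\|q_n\alpha\|$, both of which are sharply controlled by \eqref{special time estimate}.
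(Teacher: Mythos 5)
Your proof is correct. Note that the paper itself does not prove \Cref{Space of orbits}: it is imported verbatim from \cite[Lemma 2.3]{Disjoint}, so there is no internal argument to compare against; judged on its own terms, your argument is sound. The first half is exactly the standard route: $\|(i-j)\alpha\|\geq\|q_{n-1}\alpha\|$ by the best-approximation property for $1\leq|i-j|<q_n$, and then \eqref{special time estimate} at index $n-1$ together with $q_{n-1}\leq q_n$ gives the bound $\frac{1}{2q_n}$. For the second half, your appeal to the two-length refinement of the three-distance theorem at $N=q_n$ (gaps equal to $\|q_{n-1}\alpha\|$, occurring $q_n-q_{n-1}$ times, and $\|q_{n-1}\alpha\|+\|q_n\alpha\|$, occurring $q_{n-1}$ times) is accurate, and the estimate $\|q_{n-1}\alpha\|+\|q_n\alpha\|<\frac{1}{q_n}+\frac{1}{q_{n+1}}\leq\frac{2}{q_n}$ then pins every gap strictly below $\frac{2}{q_n}$, which is what the density statement needs; your remark that the separation bound alone cannot yield this via a covering argument is also correct. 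If you wanted to avoid quoting the three-distance theorem, a self-contained alternative is to compare the orbit with the rigid grid $\{x+ip_n/q_n\}_{0\leq i<q_n}$, which is equally spaced with gap exactly $\frac{1}{q_n}$: since $i|\alpha-p_n/q_n|<\frac{1}{q_{n+1}}$ for $0\leq i<q_n$ and all these displacements have the same sign, the cyclic order is preserved and each gap of the orbit lies within $\frac{1}{q_{n+1}}$ of $\frac{1}{q_n}$, giving the same bound $\frac{1}{q_n}+\frac{1}{q_{n+1}}\leq\frac{2}{q_n}$ with only the basic inequality $|\alpha-p_n/q_n|<\frac{1}{q_nq_{n+1}}$.
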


\begin{LEM}
\label{Distance estimate}
Suppose \(x\) and \(y\) are not on the same orbit, then \[0<d_k=\min\limits_{-q_{n_k}<j<q_{n_k}}\|x-R_{\alpha}^j y\|<\frac{5}{6q_{n_k}}\]
for every \(k.\)
\end{LEM}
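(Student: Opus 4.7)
The positivity $d_k > 0$ is immediate: since $x$ and $y$ lie on distinct $R_\alpha$-orbits, we have $x \neq y + j\alpha \pmod 1$ for every integer $j$, so each of the finitely many quantities $\|x - R_\alpha^j y\|$ with $|j| < q_{n_k}$ is strictly positive, and hence so is their minimum.

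For the upper bound, I apply \Cref{Space of orbits} to the forward orbit $\{y + j\alpha : 0 \le j < q_{n_k}\}$: the interval $(x - 1/q_{n_k},\, x + 1/q_{n_k})$ of length $2/q_{n_k}$ contains some $y + j_0\alpha$ with $j_0 \in [0, q_{n_k})$, giving $\delta \coloneqq \|x - (y+j_0\alpha)\| \le 1/q_{n_k}$. If $\delta < 5/(6q_{n_k})$, we are done.

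Otherwise $\delta \in [5/(6q_{n_k}),\, 1/q_{n_k}]$, and I refine the estimate by translating the index by $\pm q_{n_k-1}$. The index $j_0 - q_{n_k-1}$ always satisfies $|j_0 - q_{n_k-1}| < q_{n_k}$ since $0 \le j_0 < q_{n_k}$ and $q_{n_k-1} < q_{n_k}$; when the shift toward $x$ instead requires $+q_{n_k-1}$ and $j_0 + q_{n_k-1} \ge q_{n_k}$, I use the equivalent index $j_0 + q_{n_k-1} - q_{n_k} \in [0, q_{n_k-1})$, whose position differs by only the negligible extra amount $\|q_{n_k}\alpha\| < 1/q_{n_k+1}$. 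Each such translate shifts $y + j_0\alpha$ by $\pm \|q_{n_k-1}\alpha\|$, and by \eqref{special time estimate},
\[
\|q_{n_k-1}\alpha\| \in \bigl(1/(q_{n_k}+q_{n_k-1}),\ 1/q_{n_k}\bigr) \subset \bigl(1/(2q_{n_k}),\ 1/q_{n_k}\bigr).
\]
Choosing the sign so that the positional shift is toward $x$, the new distance satisfies
\[
\bigl|\delta - \|q_{n_k-1}\alpha\|\bigr| < \max\bigl(1/q_{n_k} - 1/(2q_{n_k}),\ 1/q_{n_k} - 5/(6q_{n_k})\bigr) = 1/(2q_{n_k}) < 5/(6q_{n_k}),
\]
yielding $d_k < 5/(6q_{n_k})$.

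\textbf{Main obstacle.} The bookkeeping required to select the correct sign of $\pm q_{n_k-1}$ so that the shift simultaneously (i) keeps the index within the admissible range $|j| < q_{n_k}$ and (ii) moves toward $x$. The sign of the positional shift $\pm(q_{n_k-1}\alpha - p_{n_k-1})$ is determined by the parity of $n_k - 1$, while the desired direction depends on whether $y + j_0\alpha$ lies to the left or right of $x$. Once the modular adjustment $j_0 + q_{n_k-1} - q_{n_k}$ is used in the edge case (introducing only the harmless additional error $\|q_{n_k}\alpha\|$), the geometric shrinking follows directly from the estimates on $\|q_{n_k-1}\alpha\|$.
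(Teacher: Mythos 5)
Your overall strategy (locate $y+j_0\alpha$ within $1/q_{n_k}$ of $x$ via \Cref{Space of orbits}, then correct by an index shift of $\pm q_{n_k-1}$, using \eqref{special time estimate} to see that $\|q_{n_k-1}\alpha\|\in(1/(2q_{n_k}),1/q_{n_k})$) is different from the paper's argument, which never constructs the close index explicitly but instead runs a pigeonhole-plus-contradiction: if $d_k\ge 5/(6q_{n_k})$, the gap of the orbit $\{y+r\alpha\}_{0\le r<q_{n_k}}$ straddling the short interval $[x,x+i_1\alpha]$ would have length $\ge \frac{5}{6q_{n_k}}+\frac{1}{2q_{n_k}}+\frac{5}{6q_{n_k}}>\frac{2}{q_{n_k}}$, contradicting \Cref{Space of orbits}. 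Your main case (no index overflow) is correct. The gap is in the wrap-around case: you assert the extra displacement $\|q_{n_k}\alpha\|$ is ``negligible'' but never quantify it. After your shift the remaining slack is only $\frac{5}{6q_{n_k}}-\frac{1}{2q_{n_k}}=\frac{1}{3q_{n_k}}$, so your estimate needs $\|q_{n_k}\alpha\|<\frac{1}{3q_{n_k}}$, i.e.\ essentially $q_{n_k+1}>3q_{n_k}$, which fails for general irrationals (if $q_{n_k+1}\le 2q_{n_k}$ then $\|q_{n_k}\alpha\|>\frac{1}{3q_{n_k}}$ by \eqref{special time estimate}). Even the sharper observation that $q_{n_k-1}\alpha-p_{n_k-1}$ and $q_{n_k}\alpha-p_{n_k}$ have opposite signs, so the wrapped index $j_0+q_{n_k-1}-q_{n_k}$ moves the point \emph{toward} $x$ by exactly $\|q_{n_k-1}\alpha\|+\|q_{n_k}\alpha\|$, does not close it: when $q_{n_k+1}$ is comparable to $q_{n_k}$ this can overshoot $x$ by more than $\frac{5}{6q_{n_k}}$. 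Condition $(\mathcal{D}_3)$ does give $q_{n_k+1}\ge q_{n_k}\log q_{n_k}\log\log q_{n_k}>3q_{n_k}$ once $q_{n_k}$ is large, so your argument is salvageable for all but finitely many $k$ by invoking it — but you never do, the lemma asserts the bound for \emph{every} $k$, and the paper's proof is unconditional (it uses no Diophantine hypothesis and works for any $n$ in place of $n_k$).

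If you want to keep your construction and remain unconditional, handle the overflow case ($j_0\ge q_{n_k}-q_{n_k-1}$) as follows: the index $j_0-q_{n_k}$ is also admissible, since $j_0-q_{n_k}\ge -q_{n_k-1}>-q_{n_k}$, and by the sign alternation it moves the point toward $x$ by exactly $\|q_{n_k}\alpha\|$. If $\|q_{n_k}\alpha\|\ge \frac{2}{3q_{n_k}}$, use $j_0-q_{n_k}$: the new distance is $|\delta-\|q_{n_k}\alpha\||<\frac{1}{3q_{n_k}}$. If $\|q_{n_k}\alpha\|<\frac{2}{3q_{n_k}}$, use $j_0+q_{n_k-1}-q_{n_k}$, whose total displacement toward $x$ is $\|q_{n_k-1}\alpha\|+\|q_{n_k}\alpha\|<\frac{1}{q_{n_k}}+\frac{2}{3q_{n_k}}$, giving $|\delta-\|q_{n_k-1}\alpha\|-\|q_{n_k}\alpha\||<\frac{5}{6q_{n_k}}$ since $\delta\ge\frac{5}{6q_{n_k}}$ and $\delta-\|q_{n_k-1}\alpha\|<\frac{1}{2q_{n_k}}$. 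With this (or with an explicit appeal to $(\mathcal{D}_3)$ plus a separate treatment of the finitely many small $k$), your proof is complete; as written, the ``negligible'' step is a genuine gap.
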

\begin{proof}
   From Pigeonhole principle, there exists \(0< i_1< q_{n_k}\), such that \(\|x-(x+i_1\alpha)\|\leq \frac{1}{q_{n_k}}\).\\
   \textit{Case 1:} If the interval \([x,x+i_1\alpha]\subset \mathbb{T}\) contains some \(y+i_2\alpha\) for \(0\leq i_2 <q_{n_k}\), then \(\|x-(y+i_2\alpha)\|<\frac{1}{2q_{n_k}}\) or \(\|x+i_1 \alpha-(y+i_2\alpha)\|<\frac{1}{2q_{n_k}}\).\\
   \textit{Case 2:} If the interval \([x,x+i_1\alpha]\) doesn't contain any \(y+i_2\alpha\) for \(0\leq i_2 <q_{n_k}\). 
   Suppose \[\min\limits_{-q_{n_k}<j<q_{n_k}}\|x-R_{\alpha}^j y\|\geq\frac{5}{6q_{n_k}},\]
   then consider \(\{y+r\alpha\}_{r=0}^{q_{n_k}-1}.\) Let \(y+i\alpha\) be the element which is  closest to \(x\) and \(y+j\alpha\) be the element which is closest to \(x+i_1\alpha\). Then 
   \begin{equation}
   \begin{aligned}
       \|y+i\alpha-(y+j\alpha)\|&\geq \|y+i\alpha-x\|+\|x-(x+i_1\alpha)\|+\|x+i_1\alpha-(y+j\alpha)\|\\ &\geq \frac{5}{6 q_{n_k}}+\frac{1}{2q_{n_k}}+\frac{5}{6q_{n_k}}> \frac{2}{q_{n_k}}.  
   \end{aligned}
  \end{equation}
    The second inequality follows from \Cref{Space of orbits} and the fact that \(|j-i_1|<q_{n_k}\), so 
    \(\|x+i_1\alpha-(y+j\alpha)\|\geq \frac{5}{6q_{n_k}}\) as well. This is a contradiction because every interval of length at least \(\frac{2}{q_{n_k}}\) should contain \(y+k\alpha\) for some \(0\leq k< q_{n_k}\).
\end{proof}
To obtain the Birkhoff sum estimate when \((x,y)\) is not a good pair, we want to use the mean value theorem and the Denjoy-Koksma estimate in \Cref{Classical Denjoy Koksma} for the derivative of the roof function \(f\). However, we need to modify the argument because the interval may contain the singularity \(0\). Fortunately, the upper bound we get for \(d_k\) in \Cref{Distance estimate} enables us to obtain a good error term for the difference of Birkhoff sums. From the definition of good pair and \Cref{Distance estimate}, if \((x,y)\) is not a good pair of order \(k\), we have either
\begin{equation}
\label{Type I distance}
 \frac{1}{q_{n_k}\log q_{n_k}}\leq d_k\leq\frac{5}{6q_{n_k}}  
\end{equation}
or
\begin{equation} 
\label{Type II distance}
\frac{1}{q_{n_k+1}\log q_{n_k+1}}\leq d_k\leq \frac{5}{6q_{n_k+1}}. 
\end{equation}
\begin{DEF}
\label{Large shear type}
We say \((x,y)\) is type \(I\) of order \(k\) if \eqref{Type I distance} holds. We say \((x,y)\) is type \(II\) of order \(k\) if \eqref{Type II distance} holds.
\end{DEF}
\begin{LEM}[Large Shearing]
\index{\(\N_3(\e)\)}
\label{Large Shearing}
Let \(0<\e<\frac{\min\{H,\e_0\}}{100}\) be given. There exist constants \(\mathfrak{d}_1\),  \(\mathfrak{d}_2\) and \(\N_3(\e)\), such that if \(x\in E_{n_k}\cap E_{n_{k}+1}\) and \(y\in E_{n_k}\) with \(k\geq \N_3(\e)\),
we have
\begin{equation} 
\label{Forward large n_k}
\mathfrak{d}_1\leq|f^{(q_{n_k})}(x)-f^{(q_{n_k})}(y)|\leq \mathfrak{d}_2 \log q_{n_k},
\end{equation} 
when \((x,y)\) is type \(I\) of order \(k\),  
and
\begin{equation} 
\label{Forward large n_k+1}
\mathfrak{d}_1\leq|f^{(q_{n_k+1})}(x)-f^{(q_{n_k+1})}(y)|\leq \mathfrak{d}_2 \log q_{n_k+1},
\end{equation} 
when \((x,y)\) is type \(II\) of order \(k\).
\end{LEM}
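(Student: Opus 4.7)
The plan is to adapt the splitting used in the small-pair case of \Cref{Small Shearing} to the single time scale $q_{n_k}$ for Type I (resp.\ $q_{n_k+1}$ for Type II), without iteration, since $d_k$ is now of order $1/q_{n_k}$ (resp.\ $1/q_{n_k+1}$). The essential new difficulty is that the interval along which one compares Birkhoff sums term by term can now straddle the singularity at $\{0\}$; this is controlled by the badly-approximable hypothesis. I describe Type I in detail; Type II is entirely parallel with $q_{n_k}$ replaced by $q_{n_k+1}$ and $x\in E_{n_k+1}$ supplying the Denjoy--Ostrowski estimate at the longer scale.

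Let $j_0\in(-q_{n_k},q_{n_k})$ realize $d_k=\|x-(y+j_0\alpha)\|$ and write
\[f^{(q_{n_k})}(x)-f^{(q_{n_k})}(y)=I+J,\quad I:=f^{(q_{n_k})}(x)-f^{(q_{n_k})}(y+j_0\alpha),\;\; J:=f^{(j_0)}(y+q_{n_k}\alpha)-f^{(j_0)}(y).\]
The upper bound $\mathfrak{d}_2\log q_{n_k}$ is immediate from \Cref{Denjoy-Koksma}: since $x,y\in E_{n_k}$ forces $I_{n_k,x},I_{n_k,y}\geq \log^{-7/8}q_{n_k}$, both $|f^{(q_{n_k})}(x)-q_{n_k}|$ and $|f^{(q_{n_k})}(y)-q_{n_k}|$ are $O(\log q_{n_k})$, so their difference is too.

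For the lower bound on $|I|$, expand $I=\sum_{i=0}^{q_{n_k}-1}[f(x+i\alpha)-f(y+j_0\alpha+i\alpha)]$. By \Cref{Distance estimate} each term's interval has length $d_k\leq 5/(6q_{n_k})$, so by the badly-approximable hypothesis (\Cref{Assumption on Singularties}, $C=2$) at most one index $i_0$ has its interval crossing the singularity $0$. For the remaining $q_{n_k}-1$ indices, apply MVT and sum using \Cref{Denjoy Ostrowski} on $f'^{(q_{n_k})}$ (valid by $x\in E_{n_k}\cap E_{n_k+1}$), obtaining a main contribution $-(A_{-}-A_{+}\pm O(\e))\,\theta\log q_{n_k}$ with $\theta:=d_k\,q_{n_k}\in[1/\log q_{n_k},5/6]$, and a bounded MVT error because $d_k^2\sum|f''|=O(1)$. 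For the exceptional index $i_0$, the explicit form in \Cref{Definition of roof} together with the $E_{n_k}$-induced lower bounds $\delta_1,\delta_2\geq 1/(q_{n_k}\log^{7/8}q_{n_k})$ on the two distances from $0$ yields a singular contribution $(A_{-}-A_{+})\log q_{n_k}+O(\log\log q_{n_k})$, whose sign is forced to be opposite that of the MVT main term by the geometry of the crossing. The strict inequality $\theta\leq 5/6<1$ then gives
\[|I|\geq(A_{-}-A_{+})(1-\theta)\log q_{n_k}+O(\log\log q_{n_k})\geq\tfrac{A_{-}-A_{+}}{12}\log q_{n_k}\]
when the singular crossing occurs, and $|I|\geq A_{-}-A_{+}$ from the MVT alone otherwise; in both cases $|I|$ is bounded below by a positive constant.

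The error $J$ is a telescoping sum with step $\|q_{n_k}\alpha\|<1/q_{n_k+1}$; \Cref{Denjoy Ostrowski} applied to $f'^{(j_0)}$ gives
\[|J|\leq(A_{-}-A_{+}+O(\e))\,j_0\log j_0\cdot\|q_{n_k}\alpha\|\leq\frac{(A_{-}-A_{+}+O(\e))\,q_{n_k}\log q_{n_k}}{q_{n_k+1}}.\]
The decisive input is condition $\mathcal{D}_3$ in \Cref{Definition of rotation angle}: since $n_k$ lies in the subsequence with $q_{n_k+1}\geq q_{n_k}\log q_{n_k}\log\log q_{n_k}$, the right-hand side is $(A_{-}-A_{+}+O(\e))/\log\log q_{n_k}\to 0$, so for $k\geq\N_3(\e)$ large we obtain $|J|<|I|/4$, hence $|I+J|\geq(A_{-}-A_{+})/4=:\mathfrak{d}_1$. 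Type II proceeds identically at scale $q_{n_k+1}$: the Denjoy--Ostrowski estimate is justified by $x\in E_{n_k+1}$, and the telescoping step $\|q_{n_k+1}\alpha\|<1/q_{n_k+2}\leq 1/q_{n_k+1}$ combined with $j_0<q_{n_k}$ and $\mathcal{D}_3$ again gives $|J'|=O(1/\log\log q_{n_k})$. The main obstacle throughout is the singular-crossing analysis for $I$; it is resolved by the interplay between the badly-approximable hypothesis, the $E_{n_k}$-separation from $0$, and the strict bound $d_k\cdot q_{n_k}\leq 5/6$ supplied by \Cref{Distance estimate}.
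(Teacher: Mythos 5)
Your skeleton is the paper's: the decomposition $f^{(q_{n_k})}(x)-f^{(q_{n_k})}(y)=I+J$ with $I$ comparing $x$ and $y+j_0\alpha$ over $q_{n_k}$ terms and $J$ a length-$j_0$ telescoping correction, a case split according to whether some comparison interval straddles $0$, the margin $d_k q_{n_k}\le \frac56<1$ against the singular contribution, and $\mathcal{D}_3$ to make $J$ small. Two of your key steps, however, do not hold as stated. First, \Cref{Assumption on Singularties} with $C=2$ does not give at most one crossing index: a crossing only forces $\|x+i\alpha\|\le d_k\le\frac{5}{6q_{n_k}}$, which can lie entirely outside the window of radius $\frac{1}{4q_{n_k}}$ that the badly-approximable condition controls. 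The correct count is ``at most two'', and it follows from the $\frac{1}{2q_{n_k}}$-spacing of the orbit (\Cref{Space of orbits}), which is exactly what the paper proves; this is harmless for your final bound, but the justification you cite is wrong. Second, and more seriously, your control of the non-crossing part in the crossing case fails: the claim $d_k^2\sum|f''|=O(1)$ is false. The sets $E_{n_k}$ only keep orbit points at distance $\ge\frac{1}{q_{n_k}\log^{7/8}q_{n_k}}$ from $0$, so a single near-singular non-crossing term already contributes $d_k^2 f''(\eta)\asymp\log^{7/4}q_{n_k}$, which dominates both your main term $\theta\log q_{n_k}$ and the singular contribution $(A_--A_+)\log q_{n_k}$ you need to beat (moreover $g$ is only absolutely continuous, so $f''$ is not even defined for that part). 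This is precisely where the paper works: it applies the mean value theorem to the summed non-crossing differences, replaces the derivative by the truncated $\bar{f_m}'$, and invokes \Cref{Classical Denjoy Koksma} together with a variation bound for $\bar{f_m}'$, producing errors of size $q_m\log^{7/8}q_m\cdot d_k=o(\log q_m)$. Without an argument of that type the crossing-case lower bound does not close. (Your side remark that the singular term must have sign opposite to the main term is actually backwards: given the $E_{n_k}$-bounds both carry the sign of $-(y+j_0\alpha-x)$; since you only use the worst-case $(1-\theta)$ bound this does not affect your inequality.)

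Two further points. Your Type I upper bound, via \Cref{Denjoy-Koksma} applied to $x$ and $y$ separately, is fine and simpler than the paper's route; but in Type II the same step needs $|\log I_{n_k+1,y}|=O(\log q_{n_k+1})$, and the hypothesis only gives $y\in E_{n_k}$, which controls $y$'s orbit for about $4q_{n_k}\log^{1/2}q_{n_k}$ steps, not for $q_{n_k+1}$ steps; likewise your crossing analysis at scale $q_{n_k+1}$ uses lower bounds on $\|y+(j_0+i)\alpha\|$ for $i<q_{n_k+1}$ that the stated hypotheses do not supply (the paper is terse here too, but you should not present Type II as ``entirely parallel'' without addressing this). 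Finally, \Cref{Denjoy Ostrowski} requires $\theta\notin\Sigma_{n_k}(M)$, i.e.\ orbit control over roughly $\frac18 q_{n_k+1}$ steps, which $E_{n_k}\cap E_{n_k+1}$ does not provide; for sums of length exactly $q_{n_k}$ (or $q_{n_k+1}$) you should instead use the derivative estimate in \Cref{Denjoy-Koksma}, whose error is governed by $I_{n_k,\theta}$ and is controlled by the $E$-sets. With that substitution your no-crossing lower bound $|I|\gtrsim (A_--A_+)$ is correct, since in the absence of a crossing $f^{(q_{n_k})}$ is smooth on the whole segment between $x$ and $y+j_0\alpha$ and a single mean value theorem applies.
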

%\JCcomment{Suggestions: "At least one of the following 4 equations hold" and then list all possibilities. After equation (24) say you will prove this when $d_k>\frac 1 {q_m\log(q_m)}$ and in that case one of the first two equations hold for $r=m$. Otherwise one of the last two equations hold for $r=m+1$. The proofs are similar so you will only prove when $d_k>$.}
\begin{proof}
For simplicity, denote \(m\coloneqq n_k\).
When \((x,y)\) is type \(I\) of order \(k\), we have  \(\frac{1}{q_m\log q_m}\leq d_k\leq \frac{5}{6q_m}. \) 
Since \(x\in E_{n_k}\), the forward orbit is bounded away from singularity by \(\frac{1}{q_m\log^{\frac{7}{8}} q_m}.\)
Let \(j_0\) be the index such that \(d_k=\|x-R_{\alpha}^{j_0}y\|\) achieves the minimal distance.
Let \(\mathcal{I}_{x,y}=\{i: 0\in [x+i\alpha,y+j_0\alpha+i\alpha], 0\leq i<q_m\}\).\\
\(\textit{Case 1:}\) \(\mathcal{I}_{x,y}=\emptyset.\) We then follow the same computation as in \Cref{Small Shearing}.
Recall we denote \[I_1\coloneqq f^{(q_m)}(x)-f^{(q_m)}(y+j_0\alpha) \] and 
\[J_1\coloneqq \displaystyle \sum_{i=0}^{j_0-1} f(y+i\alpha+q_m\alpha)-f(y+i\alpha).\]
Then we can write \(f^{(q_m)}(x)-f^{(q_m)}(y)=I_1+J_1.\)
Applying the same estimate shown in \Cref{Small Shearing}, we deduce that 
\(|J_1|\leq \e\) and 
\[H q_m\log q_m d_k \leq |I_1| \leq (H+1)q_m\log q_m d_k.\]
By our assumption in \eqref{Type I distance}, we will obtain 
\[\frac{99}{100} H\leq |I_1|-|J_1|\leq|f^{(q_{m})}(x)-f^{(q_{m})}(y)|\leq |I_1|+|J_1|\leq (H+1) \log q_{m}.\]
\noindent\(\textit{Case 2:}\) \(\mathcal{I}_{x,y}\neq \emptyset.\) We claim that the cardinality of \(\mathcal{I}_{x,y}\) is no more than 2.
Suppose that there are three elements in \(\mathcal{I}_{x,y}\), we assume without loss of generality that \(0\in \mathcal{I}_{x,y}\). Suppose \(i_1,i_2 \in \mathcal{I}_{x,y}\) and they are nonzero, then \(y+j_0\alpha+i_1\alpha\), \(y+j_0\alpha+i_2\alpha \in [x,y+j_0\alpha]\) by the definition of \(j_0.\) From \Cref{Space of orbits}, either \(\|i_2\alpha\|\) or \(\|i_1 \alpha\| \geq \frac{1}{q_{n_k}}.\) This is a contradiction to the fact that \(\|x-(y+j_0\alpha)\|\leq \frac{5}{6 q_{n_k}}.\) Therefore, we can divide the term \(I_1\) into two parts:
\begin{equation}
\label{type I estimate}
\begin{aligned}
      &\left||\sum_{j=0}^{q_m-1}\bar{f}_j(x,y)|-|\sum_{i\in \mathcal{I}_{x,y}}f(x+i\alpha)-f(y+j_0\alpha+i\alpha)|\right|\\
      &\leq |I_1|\\
      &\leq \left||\sum_{j=0}^{q_m-1}\bar{f}_j(x,y)|+|\sum_{i\in \mathcal{I}_{x,y}}f(x+i\alpha)-f(y+j_0\alpha+i\alpha)|\right|,
  \end{aligned}  
\end{equation}
where 
\begin{equation}
\bar{f}_j(x,y)=\left\{
\begin{aligned}
&f(x+j\alpha)-f(y+j_0\alpha+j\alpha)  & if \; j\notin \mathcal{I}_{x,y},\\
&0   &  otherwise. \\
\end{aligned}
\right.
\end{equation}
Apply the mean value theorem to \(\sum_{i=0}^{q_m-1}\bar{f}_i(x,y)\), there exists \(\theta \in [x,y]\) such that 
\begin{equation}
\left|\sum_{i=0}^{q_m-1}\bar{f}_i(x,y)\right|
 \leq\left(\left|\sum_{i=0}^{q_m-1}\bar{f_m}'(\theta+i\alpha)\right|+4A_{-}q_m\log^{7/8}q_m\right)d_k,   
\end{equation}
where 
\begin{equation}
\bar{f_m}(x)=\left\{
\begin{aligned}
&f(x)  & if \; \|x\|\geq \frac{1}{q_m\log^{7/8} q_m},\\
&0   &  otherwise. \\
\end{aligned}
\right.
\end{equation}
Following the proof of \Cref{Denjoy-Koksma}, apply \Cref{Classical Denjoy Koksma} to \(\bar{f_m}'\), we have 
\[\left|\sum_{i=0}^{q_m-1}\bar{f_m}'(\theta+i\alpha)-q_m \int_{\mathbb{T}} \bar{f_m}'d\lambda \right|<2Var(\bar{f_m}'),\]
where \(\lambda\) is the Haar measure on \(\mathbb{T}.\)
Note that \(f'(x)=-\frac{A_{-}}{x}+\frac{A_{+}}{1-x}+g'(x).\) Let \(v_m=\frac{1}{q_m\log^{7/8}q_m}\), therefore,
\[\int_{\mathbb{T}} \bar{f_m}'d\lambda=\int_{v_m}^{1-v_m} f' d\lambda=f(1-v_m)-f(v_m).\]
When \(m\) is large enough:
\[|f(1-v_m)-f(v_m)| \leq (A_{-}-A_{+}+\frac{H}{200})\log q_m,\]
and
\[Var(\bar{f_m}') \leq \frac{H}{1000} q_m \log q_m.\]
Combining the above two estimates, we obtain:
\[\left|\sum_{i=0}^{q_m-1}\bar{f_m}'(\theta+i\alpha)\right|\leq (A_{-}-A_{+}+\frac{H}{100})q_m\log q_m.\]
Hence,
\begin{equation}
\left|\sum_{i=0}^{q_m-1}\bar{f}_i(x,y)\right| \leq (A_{-}-A_{+}+\frac{H}{50})q_m\log q_md_k.   
\end{equation}
Since \(\frac{1}{q_m\log q_m}\leq d_k\leq \frac{5}{6q_m},\) the above estimate implies 
\begin{equation}
\label{Middle Large} 
 \left|\sum_{i=0}^{q_m-1}\bar{f}_i(x,y)\right| \leq \frac{5}{6}(A_{-}-A_{+}+\frac{H}{50})\log q_m.
\end{equation}
Then, it suffices to give an estimate of the pair whose interval contains the singularity \(0.\) For every \(i\in \mathcal{I}_{x,y}\), we have
\begin{equation}
\label{Upper Large}
\begin{aligned}
  |f(x+i\alpha)-f(y+j_0\alpha+i\alpha)|&\leq A_{-} \log (q_m \log^{\frac{7}{8}} q_m)\\
  &-A_{+} \log (q_m) +\frac{H}{100} \log (q_m)\\
  &\leq (A_{-}-A_{+}+\frac{H}{50})\log (q_m),
\end{aligned} 
\end{equation}
and
\begin{equation}
\label{Lower Large}
\begin{aligned}
  |f(x+i\alpha)-f(y+j_0\alpha+i\alpha)|&\geq A_{-} \log (q_m)\\
  &-A_{+} \log (q_m \log q_m) -\frac{H}{100} \log (q_m)\\
  &\geq (A_{-}-A_{+}-\frac{H}{50})\log (q_m).
\end{aligned}  
\end{equation}
Combine \eqref{Middle Large}, \eqref{Lower Large} and \eqref{Upper Large}, we will obtain:
\[\left(\frac{1}{6}(A_{-}-A_{+})-\frac{H}{25}\right) \log(q_m)\leq|f^{(q_m)}(x)-f^{(q_m)}(y)|\leq 2(A_{-}-A_{+}+\frac{H}{50}) \log(q_m).\]
Take \(\mathfrak{d}_1=\frac{99H}{100}\) and \(\mathfrak{d}_2=2(A_{-}-A_{+}+\frac{H}{50})\), we complete the proof of the first case.\\
When \((x,y)\) is type \(II\) of order \(k\), we have \(\frac{1}{q_{m+1} \log q_{m+1}} < d_k <\frac{5}{6 q_{m+1}}.\)  We only need to replace \(m\) in all previous estimates with \(m+1\). Thus, we complete the proof.
\end{proof}

\subsection{Step 2}The second step uses the divergence of Birkhoff sums to classify the centralizer of the special flow. Suppose \(\nu\) is 2-simple, recall from \eqref{Carrying set of graph joining}, we can pick a Lusin set such that \(\phi\) is uniformly continuous on that subset. Recall from \eqref{Definition of flow},  \[T_{t}^f(x,s)=(x+m\alpha,t+s-f^{(m)}(x)).\] 
In particular, taking \(t=f^{(q_n)}(x)\), we have
\(T_{t}^f(x,s)=(x+q_n\alpha,s)\) when restricted to a suitable subset. Suppose that \(\phi\) is a non-trivial centralizer, \((x,s)\) and \(T_{t}^f(x,s)\) are in the Lusin set.
In the small shearing case, by the divergence of Birkhoff sums, the distance between \(\phi(x,s)\) and 
\(T_{t}^f\phi(x,s)\) will stay within a compact set by \Cref{Small Shearing}, this is a contradiction to the uniform continuity of \(\phi\). The more interesting part is the large shearing case. Due to the existence of singularities, the shearing effect is very strong, then \(\phi(x,s)\) and 
\(T_{t}^f\phi(x,s)\) may still be very close to each other after shifting a large distance. However, the horizontal distance between \(\phi(x,s)\) and 
\(T_{t}^f\phi(x,s)\) will be much larger compared to \(\|q_n\alpha\|\) then. Therefore, by comparing the orbits of the pair and applying the uniform Birkhoff estimate in \Cref{Classical uniform Birkhoff ergodic theorem}, we can conclude the trivialness of centralizers.
The proof of the following uniform Birkhoff ergodic theorem is an application of Birkhoff ergodic theorem and Egorov's theorem. 
\begin{LEM}
\label{Classical uniform Birkhoff ergodic theorem}
Let \(\{T_t\}_{t\in \mathbb{R}}\) be an ergodic flow acting on \((X,\mu)\) and \(\rho\) be an ergodic self-joining of \((X,(T_t)_{t\in \mathbb{R}},\mu).\) For measurable sets \(A, B\) and every \(\eta, \theta ,\kappa>0\), there exist \(N=N(\eta,\theta,\kappa)\) and a set \(\mathcal{E}=\mathcal{E}(\eta,\theta,\kappa)\) with \(\rho(\mathcal{E})>1-\theta\) such that for every \(M,L \geq N\) with \(L \geq \kappa M\), we have
\[\left|\frac{1}{L} \int_{M}^{M+L} \mathds{1}_{A \times B}(T_tx,T_ty)dt-\rho(A\times B)\right|<\eta\]
and
\[\left|\frac{1}{L} \int_{M}^{M+L} \mathds{1}_{A \times B}(T_{-t}x,T_{-t}y)dt-\rho(A\times B)\right|<\eta\]
for every \((x,y)\in \mathcal{E}.\)
\end{LEM}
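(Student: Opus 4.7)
The plan is to combine the Birkhoff ergodic theorem with Egorov's theorem and then pass from averages over $[0,T]$ to averages over $[M,M+L]$ by a simple algebraic identity, using the ratio constraint $L \geq \kappa M$ to keep all coefficients bounded in terms of $\kappa$.

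First I would apply the Birkhoff ergodic theorem on the product system $(X\times X,(T_t\times T_t)_{t\in\mathbb{R}},\rho)$, which is ergodic by hypothesis, to the bounded measurable function $\mathds{1}_{A\times B}$. This gives a full $\rho$-measure set on which both
\[
\frac{1}{T}\int_0^T \mathds{1}_{A\times B}(T_tx,T_ty)\,dt \longrightarrow \rho(A\times B)
\quad\text{and}\quad
\frac{1}{T}\int_0^T \mathds{1}_{A\times B}(T_{-t}x,T_{-t}y)\,dt \longrightarrow \rho(A\times B)
\]
as $T\to\infty$. Applying Egorov's theorem twice (once for each direction) and intersecting, I obtain a set $\mathcal{E}$ with $\rho(\mathcal{E})>1-\theta$ on which both convergences are \emph{uniform}. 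Concretely, fix $\eta'=\eta\kappa/(2+2\kappa)$; then there exists $N_0=N_0(\eta,\theta,\kappa)$ such that for every $T\geq N_0$ and every $(x,y)\in\mathcal{E}$, both forward and backward averages lie within $\eta'$ of $\rho(A\times B)$.

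Next I would use the identity
\[
\frac{1}{L}\int_M^{M+L}\mathds{1}_{A\times B}(T_tx,T_ty)\,dt
=\frac{M+L}{L}\cdot\frac{1}{M+L}\int_0^{M+L}\mathds{1}_{A\times B}(T_tx,T_ty)\,dt-\frac{M}{L}\cdot\frac{1}{M}\int_0^M\mathds{1}_{A\times B}(T_tx,T_ty)\,dt
\]
valid for any $M\geq 1$. Setting $N\coloneqq\max(N_0,1)$, if $M,L\geq N$ and $L\geq\kappa M$, then both $M$ and $M+L$ are $\geq N_0$, so each of the two full averages on the right differs from $\rho(A\times B)$ by at most $\eta'$. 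Writing the right-hand side as
\[
\rho(A\times B)\cdot\Bigl(\tfrac{M+L}{L}-\tfrac{M}{L}\Bigr)+(\text{error})
=\rho(A\times B)+(\text{error}),
\]
the error is bounded by $\bigl(\tfrac{M+L}{L}+\tfrac{M}{L}\bigr)\eta'\leq (1+2/\kappa)\eta'=\eta$, where I used $M/L\leq 1/\kappa$ and $(M+L)/L\leq 1+1/\kappa$. This proves the forward inequality, and the identical argument applied to the backward averages (using the second part of $\mathcal{E}$) yields the backward inequality.

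The proof is essentially a bookkeeping exercise, and the only subtlety is choosing $\eta'$ in terms of $\kappa$ so that the amplification factors $(M+L)/L$ and $M/L$ do not spoil the estimate; this is why $\kappa$ must enter the definition of $N$. No real obstacle is expected.
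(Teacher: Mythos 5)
Your proposal is correct and follows exactly the route the paper indicates (it gives no detailed proof, only the remark that the lemma is an application of the Birkhoff ergodic theorem and Egorov's theorem): Birkhoff on the ergodic product system, Egorov for uniformity on a set of measure $>1-\theta$ in both time directions, and the transfer from $[0,T]$-averages to $[M,M+L]$-averages via the subtraction identity with the factor $1+2/\kappa$ absorbed into the choice of $\eta'$. The only cosmetic points are that your final error bound is $\eta(\kappa+2)/(2\kappa+2)<\eta$ rather than exactly $\eta$, and that Egorov formally applies to sequences, so one should take integer times $T=n$ and note that boundedness of the integrand controls the averages between consecutive integers; neither affects the argument.
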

\subsection{Proof of Step 2}
Given \(\alpha \in \mathcal{ D}\), let \(\nu\) be an ergodic self-joining of the special flow.
\index{\(G_k\): The set of good pairs of order \(k\)}
\index{\(\hat{H}\): The minimal shifting distance within the compact set} 
Define 
\begin{equation}
\label{Definition of G_k}
  G_k\coloneqq\{((x,s_1),(y,s_2))\in X^f\times X^f: (x,y) \textit{\;is a good pair of order k}\},   
\end{equation}
and
\[\hat{H}\coloneqq \min \{\frac{H}{200},\min_{1\leq k \leq \frac{|2H+3|}{a}+1}{\|k \alpha\|}\}. \]
For given \(\e>0,\) we define
 \index{\(\delta\): Distance of nearby points}
\begin{equation}
\label{Value of delta}
   \delta \coloneqq \frac{1}{100}\min \{\hat{H}, \frac{\e^2}{10}\},
\end{equation}
and
\index{\(X_{\e}\): Truncated space}
\begin{equation}
\label{Set X_{e}}  
X_{\e}\coloneqq \{(x,s) \in X^f: \|x\|\geq \frac{\e}{\log^2(\e)}, \frac{\e}{100}\leq s \leq f(x)-\frac{\e}{100}\}.
\end{equation}
From our assumption on the roof function \eqref{Roof function},
\index{\(\N_4(\e)\)}
there exists \(\N_4(\e),\) such that for all \(x',y'\in \mathbb{T}\), \(n\geq \N_4(\e)\) with \(\|x'\|\geq \frac{\e}{\log^2(\e)}\) and \(\|x'-y'\|\leq \frac{1}{q_{n}}\), we have
\begin{equation}
\label{distance of fiber}
    |f(x')-f(y')|\leq \delta.
\end{equation}
By taking \(\e\) small enough, we can assume that for every \(p\in P\) and \((x,s), (x',s')\in X^f\) with \(d((x,s),(x',s'))\leq \delta\) and \(T_{p}^f(x,s)\in X_{\e},\) then 
\begin{equation}
\label{Continuity of nearby points within distance P}
d(T_{p}^f(x,s),T_{p}^f(x',s'))\leq \frac{\e}{1000}.
\end{equation}
For \((x,s)\in X^f,\) and \(t\in\mathbb{R}\)
define \(n(x,s,t)\) and \(s_t\) as unique numbers such that \index{\(n(x,s,t)\): The number occurs in orbits of the special flow}
\index{\(s_t\): The second coordinate of the special flow}
\begin{equation}
\label{Coordinates of special flow}
 T_t^f(x,s)=(x+n(x,s,t)\alpha,s_t).   
\end{equation}
\begin{LEM}
\label{Small shear flow}
Under assumptions in \Cref{Theorem of centralizer}, suppose \(\nu\) is an ergodic 2-simple self-joining of \((X^f,(T_t^f)_{t\in \mathbb{R}}, \mu^f)\) and \(\nu(\Omega
)=1\) with \(\Omega\) in \eqref{Carrying set of graph joining}. Given \(0<\e<\frac{\min\{\hat{H},\e_0\}}{100}\), suppose that \(\nu(G_k)>\e\) for large enough \(n_k\) in \Cref{Definition of rotation angle}, then \(\phi\) is a trivial centralizer.
\end{LEM}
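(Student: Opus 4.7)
My plan is to argue by contradiction. Assume $\phi$ is a non-trivial centralizer, so that for $\nu$-a.e.\ $((x,s_1),(y,s_2)) \in \Omega$ the point $y$ does not lie on the $R_\alpha$-orbit of $x$. First I would apply Lusin's theorem to $\phi$ to obtain a compact set $K \subset X^f$ with $\mu^f(K) > 1 - \e/100$ on which $\phi|_K$ is uniformly continuous, and set
\[
A_k := G_k \cap \bigl(K \cap X_\e \cap Z(\e)^f \cap O(\e)^f \cap E_{n_k}^f\bigr)^{\times 2}.
\]
Since $\nu$ projects to $\mu^f$ on each coordinate and each auxiliary factor has $\mu^f$-measure close to $1$, the hypothesis $\nu(G_k) > \e$ gives $\nu(A_k) \geq \e/2$, on which all estimates of \Cref{Small Shearing}--\Cref{Denjoy Ostrowski} are simultaneously in force, provided $n_k$ exceeds $\N_0(\e), \N_1(\e), \N_2(\e,\zeta), \N_3(\e), \N_4(\e)$.

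Next, given any $((x,s_1),(y,s_2)) \in A_k$, I would invoke \Cref{Small Shearing} to produce $\ell_0 \in \mathbb{Z}$, $m \geq n_k$, and $p \in P$ (so $|p| \geq H/200$) with $|f^{(\ell_0 q_m)}(x) - f^{(\ell_0 q_m)}(y) - p| \leq \zeta$, and set $t := f^{(\ell_0 q_m)}(x)$. The definition \eqref{Definition of flow} gives
\[
T_t^f(x, s_1) = (x + \ell_0 q_m \alpha,\ s_1),
\]
and since $\|\ell_0 q_m \alpha\| \leq 1/q_m$ and $x \in O(\e)$, \eqref{distance of fiber} places this within distance $\delta$ of $(x,s_1)$. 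A parallel computation on the second coordinate, using that a fiber shift of magnitude at most $2H+2$ triggers at most $(2H+3)/a + 1$ applications of $R_\alpha$ (because $f \geq a$), yields
\[
T_t^f(y, s_2) = \bigl(y + (\ell_0 q_m + k)\alpha,\ s_2'\bigr)
\]
for a unique integer $k$ with $|k| \leq (2H+3)/a + 1$ and fiber value $s_2' = s_2 + p - f^{(k)}(y + \ell_0 q_m \alpha) + O(\zeta)$.

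Then I would apply \Cref{Classical uniform Birkhoff ergodic theorem} combined with a standard Egorov refinement to pass to a positive-$\nu$-measure subset of $A_k$ on which both $(x,s_1)$ and $T_t^f(x,s_1)$ land in $K$. The centralizer identity $\phi \circ T_t^f = T_t^f \circ \phi$ and uniform continuity of $\phi|_K$ then give
\[
d\bigl(T_t^f(y,s_2),\ (y,s_2)\bigr) = d\bigl(\phi(T_t^f(x,s_1)),\ \phi(x,s_1)\bigr) < \delta,
\]
and, using $(y,s_2) \in X_\e$ to rule out unintended identifications on the right, this splits into $\|(\ell_0 q_m + k)\alpha\| < \delta$ in the base and $|s_2' - s_2| < \delta$ in the fiber.

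To conclude I would case-split on $k$. If $k = 0$, the fiber inequality combined with the shearing estimate gives $|p| < \delta + 2\zeta < H/200$, contradicting $p \in P$. If $k \neq 0$, then $\|k\alpha\| \leq \|(\ell_0 q_m + k)\alpha\| + \|\ell_0 q_m \alpha\| < 2\delta < \hat H$, contradicting the definition of $\hat H$. The main obstacle I anticipate is the $k \neq 0$ branch: the fiber shift by $p \in P$ can genuinely trigger an extra $R_\alpha$ step, so one cannot simply compare $\|\ell_0 q_m \alpha\|$ to $0$. This is precisely why $\hat H$ bundles $H/200$ with the spatial gaps $\|k\alpha\|$ over the relevant range of $k$; once that bookkeeping is in place the contradiction is immediate and $\phi$ must be trivial.
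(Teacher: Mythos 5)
Your endgame (the dichotomy via \(\hat H\): either the extra base step \(k\neq 0\), giving a horizontal displacement \(\geq\min\|k\alpha\|\), or \(k=0\), giving a vertical displacement \(\approx|p|\geq H/200\)) is exactly the contradiction the paper runs, and your bookkeeping of \(T^f_t(x,s_1)=(x+\ell_0q_m\alpha,s_1)\) and \(|k|\leq (2H+3)/a+1\) is consistent with how \(\hat H\) and \(\delta\) were designed. The gap is in the middle step: you assert that \Cref{Classical uniform Birkhoff ergodic theorem} ``combined with a standard Egorov refinement'' yields a positive-\(\nu\)-measure subset of \(A_k\) on which both \((x,s_1)\) and \(T^f_t(x,s_1)\) lie in the Lusin set, where \(t=f^{(\ell_0 q_m)}(x)\). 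But \(t\) (through \(m\geq n_k\) and \(\ell_0\)) depends on the pair \((x,y)\) via \(d_k\), and it is a single time, not a time average: the uniform Birkhoff lemma only controls averages over long windows \([M,M+L]\) for sets fixed in advance, and a union bound over the possible values of \((m,\ell_0)\) fails because their range is unbounded (while each exceptional set only has measure of order \(\e\)). Nothing you invoke rules out that, for every pair in \(A_k\), the specific image \(T^f_{t(x,y)}(x,s_1)\) falls outside the Lusin set, and then the commutation-plus-uniform-continuity step never fires.

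The paper closes exactly this hole by transporting the problem along the orbit rather than selecting a better pair: it fixes (before choosing \(n_k\)) an ergodic-average set \(\mathcal U(\e)\) and a threshold \(T_0(\e)\), flows the pair over \([0,f^{(2\ell_0q_m)}(x)]\), and uses the density of visits of the \emph{pair} orbit to \(\mathcal X(\e)\times\mathcal X(\e)\) to find a positive density of times \(u\) at which both \((x_u,s^1_u)\) and the shifted point \((\tilde x_u,\tilde s^1_u)=T^f_{f^{(\ell_0q_m)}(x_u)}(x_u,s^1_u)\) lie in \(\mathcal X(\e)\) (the shifted orbit is a time-reparametrized copy of the original, so its bad times also have small density). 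Crucially, this only helps because \Cref{Small shearing preserving} guarantees that the shearing estimate \(|f^{(\ell_0q_m)}(x+n_1\alpha)-f^{(\ell_0q_m)}(y+n_2\alpha)-p|\leq\zeta\) persists at the displaced base points \(x_u,y_u\), so the contradiction can be run at the good time \(u\) instead of at time \(0\). Your proposal never flows the pair and never uses \Cref{Small shearing preserving}, so this mechanism is missing; without it the argument as written does not go through. If you add the orbit-density argument and the preservation lemma (and fix \(T_0(\e)\) before \(n_k\), respecting the order of quantifiers), the rest of your computation matches the paper's proof.
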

 \begin{proof}
Assume \(\phi\) is not a trivial centralizer. For the given \(\e>0\), there is a subset \(\tilde{U}\subset X^f\) with \(\mu^f(\tilde{U})>1-\frac{\e}{100}\), such that \(\phi\) is uniformly continuous on \(\tilde{U}.\)  If \((x,s_1), (y,s_2) \in \tilde{U}\) and \(\|x-y\|+|s_1-s_2|\leq \tau\), then 
\[d\left(\phi(x,s_1),\phi(y,s_2)\right) \leq \hat{\delta}(\tau).\]
\index{\(\tau\): Distance between base points}There exists \(\tau_0\), such that if \(\tau<\tau_0\), then
\( \hat{\delta}(\tau)<\delta.\) 
By taking \(\e\) small enough,
\[\mu^f(X_{\e})\geq 1-\frac{\e}{40}.\]
Let \(\mathcal{X}(\e)\) be \(\tilde{U}\cap X_{\e}\), then \(\mu^f(\mathcal{X}(\e))\geq 1-\frac{\e}{20}.\)
Therefore,
\[\nu(\mathcal{X}(\e)\times \mathcal{X}(\e))\geq 1-\frac{\e}{10}.\]
By ergodicity of \(T^f\times T^f,\) there exist a set \(\mathcal{U}(\e)\) with \(\nu(\mathcal{U}(\e))\geq 1-\frac{\e}{100}\) and \(T_0(\e)>0,\) such that for every \(T'\geq T_0(\e)\) and every \(((x,s_1),(y,s_2))\in \mathcal{U}(\e)\), we have
\[\left|\frac{1}{T'}\int_0^{T'} \mathds{1}_{\mathcal{X}(\e)\times \mathcal{X}(\e)}(T_t^f(x,s_1), T_t^f(y,s_2)) d t\right|\geq 1-\frac{\e}{5}.\]
Recall that \(Z(\e)\) is the set satisfying the SWR property in \eqref{Switchable Ratner set} and \( O(\e)\) is the set in \eqref{Uniform set O}. Pick \(n_k\) large enough such that \[n_k\geq \max \{\N_0(\frac{\e}{100}),\N_1(\frac{\e}{100}),\N_2(\frac{\e}{100},\frac{\e}{1000}),\N_3(\frac{\e}{100}),\N_4(\e)\}\] and \(q_{n_k} \geq \max\{T_0(\e)/2a,\frac{1}{\tau_0^3},\frac{1}{\delta^3}, 10^8\}. \)
Denote
\[\tilde{B}\coloneqq G_k\cap \mathcal{U}(\e)\cap \left(\left(Z(\frac{\e}{100})\cap O(\frac{\e}{100})\right)^f \times E_{n_k}^f\right).\]
For all \(n_k\) large enough, 
\(\nu(\tilde{B})>0.\)
Pick \(((x,s),(y,s'))\in \tilde{B} \), we know the assumption for \Cref{Small Shearing} is satisfied.
By \Cref{Small Shearing}, there exist \(\ell_0\), \(m\) and \(p\in P\) depending on \(x,y\) such that 
\[|f^{(\ell_0 q_m)}(x)-f^{(\ell_0 q_m)}(y)+p|\leq \frac{\e}{1000}.\]
Consider the orbits \(\{T_t^f(x,s): t\in[0, f^{(2\ell_0 q_m)}(x)]\} \).
Since \(((x,s)),(y,s'))\in \mathcal{U}(\e),\) the set \(\Lambda=\{t\leq f^{(2\ell_0 q_m)}(x) : T_t^f(x,s), T_t^f(y,s') \in \mathcal{X}(\e)\}\) has density at least \(1-\frac{\e}{5}.\)\\
For each \(t\leq f^{([0.99\ell_0 q_m])}(x)\), let 
\[x_t=x+n_1\alpha \text{\;\;and\;\;} y_t=y+n_2\alpha\]
where \(n_1=n(x,s,t)\) and \(n_2=n(y,s',t)\) defined as in \eqref{Coordinates of special flow}. For \(t\leq f^{([0.99\ell_0 q_m])}(x)\), by \Cref{Sum Ostrowski}, we have 
\begin{equation}
|n_1-n_2|\leq 2 (\ell_0 q_m)^{1/5}.
\end{equation}
From \Cref{Small shearing preserving}, for every \(0\leq n_1\leq \ell_0 q_m\) and corresponding \(n_2\) (assuming forward orbit case), we have:
\begin{equation}
\label{Density argument}
   |f^{(\ell_0 q_m)}(x+n_1\alpha)-f^{(\ell_0 q_m)}(y+n_2\alpha)+p|\leq \frac{\e}{1000}. 
\end{equation}
For each \(t \in \Lambda,\) with \(t\leq f^{([0.99\ell_0 q_m])}(x)\), consider \[(\tilde{x}_t,\tilde{s}_{t})=T_{f^{(\ell_0 q_m)}(x_t)}^f(x_t,s_t),\]
where \(s_t\) is defined as in \eqref{Coordinates of special flow} and similarly for \(s'_t.\) Suppose there is set \(\Lambda'\subset [0, f^{([0.99\ell_0 q_m])}(x)]\) with density larger than \(\e\) such that for every \(t\in \Lambda',\) either \((x_t,s_t)\) or \((\tilde{x}_t,\tilde{s}_t)\) is not in \(\mathcal{X}(\e).\) Then we can obtain a contradiction to \(\Lambda.\)  However, if  \((x_t,s_t)\) and \((\tilde{x}_t,\tilde{s}_t)\) are in \(\mathcal{X}(\e)\), and \(|x_t-\tilde{x}_t|\leq \|\ell_0 q_m \alpha\|\leq \frac{1}{\sqrt{q_{n_k}}} < \tau_0\), then we know \(d(\phi(x_t,s_t),\phi(\tilde{x}_t,\tilde{s}_t))\leq \delta.\)
Because \(\phi\) commutes with \(T_t^f,\)
\[\phi(\tilde{x}_t,\tilde{s}_t)=T_{f^{(\ell_0q_m)}(x_t)}^f\phi(x_t,s_t)=T_{f^{(\ell_0q_m)}(x_t)}^f(y_t,s'_t).\]
Hence,
\[d((y_t,s'_t),T_{f^{(\ell_0q_m)}(x_t)}^f(y_t,s'_t)) \leq \delta.\]
From \eqref{distance of fiber} and the fact that \((y_t,s'_t)\in \mathcal{X}(\e),\)
\[d((y_t,s'_t),T_{f^{(\ell_0q_m)}(y_t)}^f(y_t,s'_t))\leq 2 \delta.\]
From \eqref{Density argument}, 
\[d(T_{f^{(\ell_0q_m)}(x_t)}^f(y_t,s'_t),T_{f^{(\ell_0q_m)}(y_t)-p}^f(y_t,s'_t))\leq \frac{\e}{1000}.\]
This is a contradiction to our choice of \(\delta\) in  \eqref{Value of delta} and \(\e\) because the above three estimates imply that 
\[\frac{\e}{100}\leq\hat{H}\leq d(T_{f^{(\ell_0q_m)}(y_t)}^f(y_t,s'_t),T_{f^{(\ell_0q_m)}(y_t)-p}^f(y_t,s'_t))\leq 3\delta+ \frac{\e}{1000}<\frac{\e}{200}.\]
 \end{proof}
By \Cref{Small shear flow}, the pair of points satisfying the small shearing case in \Cref{Small Shearing} consists of a set of small measure. Now we focus on the large shearing case. For pairs \((x,s)\) and \((y,s')\) in a ``good'' set, \[T_{f^{(q_r)}(x)}^f(x,s)=(x+q_r\alpha,s_t)\] and \[T_{f^{(q_r)}(x)}^f(y,s')=(y+D\alpha,s'_t)\] for some integer \(D\) depending on \(x,y,r,s'\) where \(r\) is either \(n_k\) or \(n_k+1.\) The only possibility that  \(T_{f^{(q_r)}(x)}^f(y,s')\) in the \(\delta\) neighborhood of \((y,s')\) is that \(\|D\alpha\| \leq \delta\). Moreover, \(\|D\alpha\|\geq \frac{1}{q_{n_k}}\) because of large shearing estimate. However, \(\|q_r\alpha\| \leq \frac{1}{q_{n_k+1}}\), so we can see the divergence of nearby orbits because of this difference.

\begin{proof}[Proof of \Cref{Theorem of centralizer}]
Assume there is a non-trivial centralizer \(\phi: X^f \rightarrow X^f.\) Let \(\nu\) be the ergodic self-joining such that \(\nu(\Omega)=1\) where \(\Omega\) is in \eqref{Carrying set of graph joining}. Let \(n_k\) be a number as in \Cref{Definition of rotation angle} which will be determined later and \(0<\e<\frac{\min\{\hat{H},\e_0\}}{100}.\) Recall the definition of \(G_k\) in \eqref{Definition of G_k}, \(X_{\e}\) in \eqref{Set X_{e}} and \Cref{Large shear type}. We can find a subset \(\tilde{U}\subset X^f\) with \(\mu^f(\tilde{U})>1-\frac{\e}{100}\), such that \(\phi\) is uniformly continuous on \(\tilde{U}.\) Moreover, there exists \(\tilde{\delta}(\e)\) such that when \((x,s_1)\) and \((y,s_2)\in \tilde{U}\) and \(d((x,s_1),(y,s_2))\leq \tilde{\delta}\), then \(d(\phi(x,s_1),\phi(y,s_2))\leq \e.\) Denote \(\mathcal{X}(\e)\coloneqq\tilde{U}\cap X_{\e}.\) Apply \Cref{Classical uniform Birkhoff ergodic theorem}
to set \(\mathcal{X}(\e)\times \mathcal{X}(\e)\), \(\eta=\frac{\e}{100}\), \(\theta=\frac{\e}{100}\) and \(\kappa\coloneqq \kappa(\e)\) which will be determined later, we can find a set \(\mathcal{E}\coloneqq\mathcal{E}(\eta,\theta,\kappa)\) with \(\nu(\mathcal{E})\geq 1-\frac{\e}{100}\) and a number \(N\coloneqq N(\eta,\theta,\kappa)\)
such that for every \(M,L \geq N\) with \(L \geq \kappa M\), we have
\begin{equation}
\label{Forward classical uniform Birkhoff estimate}
  \left|\frac{1}{L} \int_{M}^{M+L} \mathds{1}_{\mathcal{X}(\e) \times \mathcal{X}(\e)}(T_t^f(x,s_1),T_t^f(y,s_2))dt-\nu(\mathcal{X}(\e)\times\mathcal{X}(\e))\right|<\eta  
\end{equation}
and
\begin{equation}
\label{Backward classical uniform Birkhoff estimate} 
\left|\frac{1}{L} \int_{M}^{M+L} \mathds{1}_{\mathcal{X}(\e) \times \mathcal{X}(\e)}(T_{-t}^f(x,s_1),T_{-t}^f(y,s_2))dt-\nu(\mathcal{X}(\e)\times \mathcal{X}(\e))\right|<\eta
\end{equation}
for every \(((x,s_1),(y,s_2))\in \mathcal{E}.\) For large enough \(n_k\), observe that if \(((x,s_1),(y,s_2))\in  G_k^c\), then \((x,y)\) is either type \(I\) or type \(II\) of order \(k\) by \Cref{Distance estimate}. Define 
\begin{equation}
\label{nearby points}
  (x',s_1)\coloneqq T_{f^{(q_r)}(x)}^f\x \textit{\;\;and\;\;} (y',s_2')\coloneqq T_{f^{(q_r)}(x)}^f\y,  
\end{equation}
where \(r=n_k\) if \((x,y)\) is type \(I\) of order \(k\) and \(r=n_{k}+1\) if \((x,y)\) is type \(II\) of order \(k.\) If \(((x,s_1),(y,s_2)) \in G_k^c\) and satisfying conditions in \Cref{Large Shearing}, we can find \(\mathfrak{d}_1\) and \(\mathfrak{d}_2\) such that 
\begin{equation}
\label{n_k condition 1}
\mathfrak{d}_1\leq|f^{(q_r)}(x)-f^{(q_r)}(y)|\leq \mathfrak{d}_2 \log(q_r)
\end{equation}
when \(n_k\) is large enough. 
Suppose \((x,s_1)\), \((x',s_1)\in \tilde{U}\),
there is \(\ell\in \mathbb{N}\), such that \[\frac{1}{q_{\ell+1}\log q_{\ell+1}}\leq\|y-y'\|\leq \frac{1}{q_{\ell}\log q_{\ell}},\]
where \(\frac{2}{q_{\ell}\log q_{\ell}} \geq \displaystyle \min \{\|j\alpha\|: 1\leq j \leq \frac{2\mathfrak{d}_2\log q_r}{a}\}\) and \(a\) is the lower bound of the roof function. Take \(n_k\) large enough so that 
\begin{equation}
\label{n_k condition 2}
q_{\ell} \geq \frac{2N}{a}.
\end{equation}  Note that for different \(x\), \(\ell\) may be different. However, the lower bound is uniform because of uniform continuity. Recall that \(Z(\e)\) is the set satisfying the SWR property in \eqref{Switchable Ratner set} and \( O(\e)\) is the set in \eqref{Uniform set O}. Define \(S_{n}: X^f \rightarrow X^f\) to be a map such that
\[S_n(x,s)=(x+q_n\alpha,s)\]
and \(\pi_1:X^f\times X^f \rightarrow X^f\) to be a map such that
\[\pi_1((x,s_1),(y,s_2))=(x,s_1).\]
Consider sets \[\mathcal{Q}_k\coloneqq \mathcal{X}(\e) \cap \pi_1 \mathcal{E}\cap Z(\frac{\e}{100})^f\cap O(\frac{\e}{100})^f\cap E_{n_k}^f\cap E_{n_k+1}^f\] and
\[\mathcal{U}_k=(\mathcal{Q}_k \cap S_{n_k}^{-1} \mathcal{Q}_k \cap S_{n_k+1}^{-1} \mathcal{Q}_k) \times \left(E_{n_k}^f \cap Z(\frac{\e}{100})^f\cap O(\frac{\e}{100})^f\right).\]
Here, we choose our \(n_k\) so that 
\eqref{n_k condition 1}, \eqref{n_k condition 2}, \Cref{Small shear flow} hold simultaneously.
Moreover, \(\nu(\mathcal{U}_k\cap G_k^c)>0,\) and \(\log \log q_{n_k}\geq \frac{2}{\tilde{\delta}}.\) These can be guaranteed when \(n_k\) is large enough. For such \(n_k\), pick \(((x,s_1),(y,s_2))\in \mathcal{U}_k\cap G_k^c\) and consider \(((x',s_1),(y',s_2'))\) defined in \eqref{nearby points}. Let \(\e'\) be \(\frac{\e}{1000}\). Since \(\left((x',s_1),(y',s_2')\right)\in \mathcal{Q}_k \times \left(E_{n_k}^f \cap Z(\frac{\e
}{100})^f\cap O(\frac{\e}{100})^f\right),\) 
by the proof of \cite[Proposition 4.5]{Multiple}, there exist  \(\kappa'\coloneqq \kappa'(\e')\), \(n_0\) and \(p\in P\) as in \eqref{The compact set} depending on \(y\) and \(y'\), such that 
\begin{equation}
\label{Denjoy estimate for fiber}
    f^{(n_0q_{\ell})}(y)-f^{(n_0q_{\ell})}(y')=p
\end{equation}
and
\begin{equation}
\label{Cocycle identity for fiber}
|f^{(i)}(y+n_0q_{\ell}\alpha)-f^{(i)}(y'+n_0q_{\ell}\alpha)|\leq \e'.
\end{equation}
for every \(i\in [1, [\kappa'n_0q_{\ell}]+1]\) if \(n_0>0\) or \(-i\in [1, [\kappa'n_0q_{\ell}]+1]\) if \(n_0<0.\)
Without loss of generality, we will only consider the forward orbit case, that is, \(n_0>0\). 
Applying \Cref{Denjoy Ostrowski} and mean value theorem, for every \(0\leq i \leq 2n_0 q_{\ell},\) we have 
\begin{equation}
\label{Denjoy estimate for base}
    |f^{(i)}(x)-f^{(i)}(x')|\leq \e'q_{n_k}\log(q_{n_k})\|q_r\alpha\|\leq \frac{\tilde{\delta}}{2}
\end{equation}
because \(\|q_r\alpha\|\leq \frac{1}{q_{n_k+1}} \leq \frac{1}{q_{n_k}\log q_{n_k} \log \log q_{n_k}}.\) Our \(\kappa\) is defined as \(\kappa=\frac{\kappa'}{10}.\) 
Pick \(M=f^{(n_0q_{\ell})}(y)\) and \(L=\kappa M.\) Since \((x,s_1)\) and \((x',s_1)\) are in the set \(\pi_1\mathcal{E}\), we can apply \eqref{Forward classical uniform Birkhoff estimate}. By our choice of  \(\e\), \(\eta\) and \(\theta\), we can pick a time \(t_{*}\in[M,M+L]\) so that \(T_{t_{*}}^f(x,s_1)\) and \(T_{t_{*}}^f(x',s_1)\in \mathcal{X}_{\e}.\)
By definition, there exist \(n_1= n(x,s_1,t_{*})\) and \(n_2= n(y,s_2,t_{*})\in \mathbb{N}\) such that
\begin{equation}
\label{t* coordinate of x}
T_{t_{*}}^f(x,s_1)=(x+n_1\alpha,s_1+t_{*}-f^{(n_1)}(x))
\end{equation}
and
\begin{equation}
\label{t* coordinate of y}
T_{t_{*}}^f(y,s_2)=(y+n_2\alpha,s_2+t_{*}-f^{(n_2)}(y)).
\end{equation}
By \eqref{Denjoy estimate for base}, we have 
\begin{equation}
\label{t* coordinate of x'}
T_{t_{*}}^f(x',s_1)=(x'+n_1\alpha,s_1+t_{*}-f^{(n_1)}(x'))    
\end{equation}
and 
\begin{equation}
\label{t* distance}
d(T_{t_{*}}^f(x,s_1), T_{t_{*}}^f(x',s_1))\leq \tilde{\delta}.  \end{equation}
Therefore, by \eqref{t* distance} and uniform continuity of \(\phi\), 
\begin{equation}
\label{t* distance between y and y'}
d(T_{t_{*}}^f(y,s_2), T_{t_{*}}^f(y',s_2'))=d(\phi(T_{t_{*}}^f(x,s_1)),\phi(T_{t_{*}}^f(x',s_1))) \leq \e.
\end{equation}
However, by \eqref{Denjoy estimate for fiber} and \eqref{Cocycle identity for fiber}, we have 
\begin{equation}
\label{t* coordinate of y'}
T_{t_{*}-p}^f(y',s_2')=(y'+n_2\alpha,s_2'+t_{*}-p-f^{(n_2)}(y')).
\end{equation}
Combine \eqref{t* coordinate of y} and \eqref{t* coordinate of y'}, we obtain 
\begin{equation}
\begin{aligned}
  d(T_{t_{*}}^f(y',s_2'),T_{t_{*}}^f(y,s_2))&\geq d(T_{t_{*}-p}^f(y',s_2'), T_{t_{*}}^f(y',s_2'))-d(T_{t_{*}-p}^f(y',s_2'),T_{t_{*}}^f(y,s_2))\\& \geq \hat{H}-3\e  \geq  50\e,  
\end{aligned}
\end{equation}
which is a contradiction to \eqref{t* distance between y and y'}.
\end{proof}

\section{\textbf{General case of self-joining}}
\label{General case of self-joining}
In this section, we prove \Cref{Main}. We show that if  \(\alpha\in \mathcal{D}\) in \Cref{Definition of rotation angle} and \(f\) satisfies the condition in \Cref{Definition of roof}, then the special flow has the minimal self-joining property.

From \cite{Multiple}, let \(\nu\) be an ergodic self-joining of \((T_t^f)_{t\in \mathbb{R}}\), then \(\nu\) is either a product measure or a finite extension joining.
Suppose \(\nu\) is not the product measure,\index{\(\h\): The number of elements on the fiber} and it's \(\h\textit{-to-one}\). Then, from measurable selection\cite{measuretheory}, there are \(\h\) measurable functions \(\psi_1,...\psi_{\h}: X^f \rightarrow X^f\) such that the following set \index{\(\Omega_{\nu}\): A set of full \(\nu\) measure}
\begin{equation}
\label{The support of joining}
    \Omega_{\nu}\coloneqq\{((x,s),\psi_i(x,s)): (x,s)\in X^f, 1\leq i \leq \h\}
\end{equation}
is of full \(\nu\) measure.
Even though \[\{T_t^f(\psi_j(x,s)): 1\leq j \leq \h\}=\{\psi_j(T_t^f(x,s)): 1\leq j \leq \h\}\] for \(\mu\) \(a.e.\) \((x,s)\) and every \(t\in \mathbb{R}\), we are not guaranteed that \(\psi_i(T_t^f(x,s))=T_t^f\psi_i(x,s)\)  for each \(1\leq i \leq {\h}.\) We also know that \(\psi_i(x,s)\) and \(\psi_j(x,s)\) are not on the same orbit for \(i\neq j\) and \(\mu\) \(a.e.\) \((x,s).\) Otherwise, \(\nu\) is not singular w.r.t. \((Id\times T_r^f)_{*}\nu\) for some \(r\in \mathbb{R}.\) By ergodicity, \(\nu=\mu\otimes \mu\), which is a contradiction. Using the fact that \(\psi_i(x,s)\) and \(\psi_j(x,s)\) are not on the same orbit, we want to apply similar techniques in \Cref{Centralizer} to show that \(\nu\) must be 2-simple. Based on the distance between the orbits of \((x,s)\) and the orbits of the elements of the fiber of \((x,s)\), i.e. \(\psi_i(x,s)\), there are three cases.\\
\textit{Case 1:} There is some \(1\leq i \leq {\h}\), such that \((x,s)\) and \(\psi_i(x,s)\) is a good pair of order \(k\).\\
\textit{Case 2:} For all \(1\leq i \leq {\h}\), \((x,s)\) and \(\psi_i(x,s)\) are not good pairs of order \(k\). At some time \(t_0\), \(T_{t_0}^f(\psi_i(x,s))\) returns to a small neighborhood of \(\psi_i(x,s)\).\\
\textit{Case 3:} For all \(1\leq i \leq {\h}\), \((x,s)\) and \(\psi_i(x,s)\) are not good pairs of order \(k\). At some time \(t_0\), \(T_{t_0}^f(\psi_i(x,s))\) returns to a small neighborhood of \(\psi_j(x,s)\) with \(i\neq j\).

The proof of the second case is similar to the centralizer case. So, we discuss only the first and third cases.
We will prove two results in \Cref{Product Criteria} and \Cref{Simpleness Criteria} to rule out finite-to-one ergodic self-joinings which are not 2-simple. Then, we apply those results in our special flow setting in \Cref{Small shear general case} and conclude the proof of \Cref{Main}. First, we need a stronger uniform Birkhoff ergodic theorem result than \Cref{Classical uniform Birkhoff ergodic theorem} to prove the propositions.
Let \(\rho\) be an ergodic self-joining of an ergodic measure-preserving flow system \((X,(T_t)_{t\in\mathbb{R}},\mu,d)\). 
\noindent For every \(\e>0\) and every measurable set \(Y\subset X\) with \(\mu(\partial(Y))=0\),\index{\(V_{\e}\): \(\e\) neighborhood of a set}
let's denote \[V_{\e}(Y)=\{z\in X^f: d(z,Y)<\e\}.\]
\begin{DEF}
\index{\(X(\e)\): Almost continuous set}
\label{Almost continuous}
We say a flow \((T_t)_{t\in \mathbb{R}}\) is \textit{almost continuous} if for every \(\e>0\) there exists a set \(X(\e)\) with \(\mu(X(\e))>1-\e\) such that for every \(\e'>0\), \(\exists \delta'>0\) such that for every \(x\in X(\e)\),
\[d(T_t x, T_{t'} x)< \e', \quad \text{for}\; t,t'\in[-\delta',\delta'].\]
\end{DEF}
\begin{LEM}{\cite[Lemma 2.3]{Multiple}}
\label{Uniform Birkhoff ergodic theorem}
Let \(\{T_t\}_{t\in \mathbb{R}}\) be an ergodic flow acting on \((X,\mu,\mathcal{B},d)\) almost continuously and \(\rho\) be an ergodic self-joining of \((X,(T_t)_{t\in \mathbb{R}},\mu, d).\) Let \(K \subset \mathbb{R}\) be non-empty and compact and \(A, B\) be measurable sets with \(\mu(\partial A)=\mu(\partial B)=0\). Then, for every \(\eta, \theta ,\kappa>0\), there exist \(N=N(\eta,\theta,\kappa)\) and a set 
\index{\(\mathcal{Z}\): Set in the uniform Birkhoff ergodic theorem}
\(\mathcal{Z}=\mathcal{Z}(\eta,\theta,\kappa)\) with \(\rho(\mathcal{Z})>1-\theta\) such that for every \(M,L \geq N\) with \(L \geq \kappa M\) and \(p\in K\), we have
\[\left|\frac{1}{L} \int_{M}^{M+L} \mathds{1}_{A \times T_{-p}B}(T_tx,T_ty)dt-\rho(A\times T_{-p}B)\right|<\eta\]
and
\[\left|\frac{1}{L} \int_{M}^{M+L} \mathds{1}_{A \times T_{-p}B}(T_{-t}x,T_{-t}y)dt-\rho(A\times T_{-p}B)\right|<\eta\]
for every \((x,y)\in \mathcal{Z}.\)
\end{LEM}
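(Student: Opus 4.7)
\medskip
\noindent\textbf{Proof proposal.} The plan is to reduce the uniform statement to the ordinary Birkhoff ergodic theorem applied to the product system $(X \times X, T_t \times T_t, \rho)$ and then promote pointwise convergence to the claimed uniformity through three steps: Egorov's theorem on a single full-measure convergence set, a finite-net argument in the compact parameter $K$ using almost continuity together with $\mu(\partial B) = 0$, and a two-term algebraic decomposition that converts averages over $[0, T]$ into averages over $[M, M+L]$.

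For each fixed $p \in K$, Birkhoff applied to $\mathds{1}_{A \times T_{-p}B}$ under the ergodic flow $T_t \times T_t$ yields $\rho$-a.e. convergence of $\tfrac{1}{T}\int_0^T \mathds{1}_{A \times T_{-p}B}(T_tx, T_ty)\,dt$ to $\rho(A \times T_{-p}B)$, and likewise for the backward average. Choose a countable dense set $\{p_n\} \subset K$, intersect the corresponding conull sets, and invoke Egorov to obtain a set $\mathcal{Z}_0$ with $\rho(\mathcal{Z}_0) > 1 - \theta/2$ on which the convergence is uniform in $T \geq N_0$ for every $p_n$ in both time directions. To pass from $\{p_n\}$ to all of $K$, I use that $\mu(\partial B) = 0$ implies continuity (hence uniform continuity on $K$) of $p \mapsto \rho(A \times T_{-p}B)$. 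For the time-averages, I use almost continuity: for any $\e' > 0$ there is $X(\e') \subset X$ with $\mu(X(\e')^c) < \e'$ and $\delta' > 0$ such that if $T_{t-p_n}y \in X(\e')$ and $|p - p_n| < \delta'$, then $T_{t-p}y$ is close to $T_{t-p_n}y$. Since $\mu(\partial B) = 0$, the set $\{z : d(z, \partial B) < r\}$ has $\mu$-measure going to $0$ as $r \to 0$, so the characteristic functions $\mathds{1}_{T_{-p}B}$ and $\mathds{1}_{T_{-p_n}B}$ disagree along the orbit of $y$ only on a time-set of small asymptotic density (Birkhoff once more). Choosing a finite $\delta'$-net $\{p_1, \ldots, p_m\} \subset K$ and shrinking $\mathcal{Z}_0$ to some $\mathcal{Z}$ with $\rho(\mathcal{Z}) > 1 - \theta$ then transfers uniformity from the net to every $p \in K$.

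For the last ingredient, decompose
\[
\frac{1}{L} \int_M^{M+L} g(T_t x)\,dt = \frac{M+L}{L} \cdot \frac{1}{M+L}\int_0^{M+L} g(T_t x)\,dt - \frac{M}{L} \cdot \frac{1}{M}\int_0^M g(T_t x)\,dt
\]
with $g = \mathds{1}_{A \times T_{-p}B}$. Under $L \geq \kappa M$, the ratios $\tfrac{M+L}{L} \leq 1 + \tfrac{1}{\kappa}$ and $\tfrac{M}{L} \leq \tfrac{1}{\kappa}$ are bounded, so choosing $N = N(\eta, \theta, \kappa)$ so that each of $\tfrac{1}{M}\int_0^M$ and $\tfrac{1}{M+L}\int_0^{M+L}$ is within $\eta/(2(1+1/\kappa))$ of $\rho(A \times T_{-p}B)$ yields the claimed bound. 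The backward case follows by applying the same decomposition to the reverse flow. I expect the main obstacle to be the transfer from the countable net $\{p_n\}$ to all $p \in K$, which is exactly where almost continuity and $\mu(\partial B) = 0$ are both essential; the remainder is routine bookkeeping with Birkhoff, Egorov, and bounded linear combinations.
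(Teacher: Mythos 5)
Your argument is essentially correct and is the standard proof of this statement; note that the paper itself does not prove this lemma at all, but quotes it as \cite[Lemma 2.3]{Multiple} (with a remark passing from discrete Birkhoff sums to integrals), and the analogous \Cref{Classical uniform Birkhoff ergodic theorem} is likewise described only as ``an application of Birkhoff ergodic theorem and Egorov's theorem.'' Your three ingredients --- Birkhoff for the ergodic product system $(X\times X, T_t\times T_t,\rho)$, Egorov for uniformity in the point $(x,y)$, a compactness/net argument in $p$ using almost continuity together with $\mu(\partial B)=0$, and the decomposition of $\frac{1}{L}\int_M^{M+L}$ into the two full averages $\frac{1}{M+L}\int_0^{M+L}$ and $\frac{1}{M}\int_0^M$ with coefficients bounded by $1+\frac{1}{\kappa}$ and $\frac{1}{\kappa}$ --- are exactly what is needed, and the bookkeeping with $\eta/(2(1+1/\kappa))$ closes.

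One caveat on the order of quantifiers: as written, ``invoke Egorov to obtain a set $\mathcal{Z}_0$ on which the convergence is uniform in $T\geq N_0$ for every $p_n$'' over a \emph{countable dense} family is not what Egorov gives --- the threshold it produces depends on $p_n$, and intersecting the Egorov sets does not yield a single $N_0$ valid for infinitely many parameters at a fixed error level. This is harmless because your argument only ever uses the finite $\delta'$-net: the clean ordering is to fix $\eta$, obtain $\e'$, $\delta'$ and the finite net $\{p_1,\dots,p_m\}\subset K$ from almost continuity and $\mu(\partial B)=0$ first, and only then apply Birkhoff plus Egorov simultaneously to the finitely many indicator functions $\mathds{1}_{A\times T_{-p_i}B}$ \emph{together with} the finitely many auxiliary indicators of the ``bad'' sets (a thin neighborhood of $\partial B$ and $X(\e')^c$) whose orbit densities control the discrepancy between $p$ and the nearest net point; the bounded shift by $p_i$ in the window is absorbed since $|p_i|\leq \max_{p\in K}|p|$ and $L\geq N$. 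With that reordering your sketch is a complete and correct proof, in both time directions.
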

\begin{RMK}
The above lemma works for every compact set \(K\). However, when we apply this lemma, we choose \(K=P\) as in \(\eqref{The compact set}.\) The original version of the lemma states a result for the discrete Birkhoff sum. However, it also works for the Birkhoff integral.
\end{RMK}
Suppose \(\rho\) is \(\h\)-to-one, so from measurable selection\cite{measuretheory}, we can find a set 
\begin{equation}
\label{Carrying set of abstract measure}
   \Omega_{\rho}\coloneqq\{(x,y_i(x)): x\in X, 1\leq i \leq \h\}
\end{equation}
such that \(\rho(\Omega_{\rho})=1.\)
For simplicity, we will write \(y_i(x)\) as \(y_i\) in the following. We also define 
\begin{equation}
\label{fiber of abstract measure}
   \Omega_{x}=\{y: (x,y)\in \Omega_{\rho}\}.
\end{equation}
\begin{DEF}\index{\(\e_1\)}
\label{Property I}
Let \((X,(T_t)_{t\in\mathbb{R}},\mu,d)\) be an ergodic measure-preserving flow system. Suppose for any ergodic self-joining \(\rho\) which is not the product measure, and let \(K\subset \mathbb{R}\setminus\{0\}\) be any fixed compact set. We can find \(\hat{\eta}>0\), \(\e_1>0\), \(R\coloneqq R(\hat{\eta})\) and measurable sets \(\hat{A_1},...,\hat{A_R}, \hat{B_1},...,\hat{B_R} \subset X\) such that for every \(0<\e<\e_1\) and \(p\in K\), \(\exists 1\leq j \leq R\) satisfying
\[|\rho(\hat{A_j} \times \hat{B_j})-\rho (V_{\e}(\hat{A_j})\times T_{-p}V_{\e}(\hat{B_j}))|\geq \hat{\eta}.\] Then we say \((X,(T_t)_{t\in\mathbb{R}},\mu,d)\) has \textit{variation property}. 
\end{DEF}
The following result is a standard Ratner shearing argument. Pick a base point \(x\) in a ``good'' set. Suppose we can find a map \(S\) that maps \(x\) to a nearby base point \(Sx\) in the good set, too. Now, the fiber points \(y\in\Omega_{x}\), \(y'\in\Omega_{x}\) are close to each other, which can be guaranteed by the definition of the good set. If we can find a certain time interval \([M,M+L]\) and two different values \(p_1,p_2\) depending on the distances between base points and fiber points, such that for most time \(t\in[M,M+L]\), \(T_{t}x\) is close to \(T_{t+p_1}Sx\) while \(T_{t}y\) is close to \(T_{t+p_2}y'.\) Then, we can compare \(T_t\times T_t\) orbits of \((x,y)\) and \((Sx,y')\) to show that \(\rho\) is the product measure.
\begin{PROP}
\label{Product Criteria}
Let \(K \subset \mathbb{R} \setminus\{0\}\)  be a given compact set. Let \(\rho\) be an ergodic self-joining of \((X,(T_t)_{t\in\mathbb{R}},\mu,d)\) satisfying the finite extension and variation properties. Assume that for every \(\e>0\) and \(N\in \mathbb{R}^+\),
there exist 
\begin{enumerate}
\item \(\kappa\coloneqq \kappa(\e)>0\)
\item Measurable set \(W\coloneqq W(\e)\subset X\) with  \(\mu(W)>1-\e\)
\item An injective measurable map \(S:W \to X\) where \(S\) preserves \(\mu\), that is \(\mu(A)=\mu(S^{-1}A)\) for every measurable set \(A\subset W\) 
\item A measurable set \(U\subset X\) with \(\mu(U)\geq 5\e\)
\end{enumerate}
so that for all \(x \in U\), \(\exists\) 
\(y \in \Omega_x\), \(y' \in \Omega_{Sx}\) and 
\begin{enumerate}[a)]
\item \(p_1\) or \(p_2\in K\) with \(p_1-p_2\in K\)
\item \(L,M>N\) with \(L\geq \kappa M\)
\end{enumerate}
such that at least one of the following is true:
\begin{equation}
\label{Forward relative shear}
  |\{t\in [M,M+L]: \max\{d(T_tx,T_{t+p_1}Sx),d(T_ty,T_{t+p_2}y')\}<\epsilon|>(1-\epsilon) L  
\end{equation}
and
\begin{equation}
\label{Backward relative shear}
 |\{-t\in [M,M+L]: \max\{d(T_tx,T_{t+p_1}Sx),d(T_ty,T_{t+p_2}y')\}<\epsilon|>(1-\epsilon) L.   
\end{equation}
Then, \(\rho=\mu \otimes \mu.\)
\end{PROP}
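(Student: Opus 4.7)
The plan is to derive a contradiction with the variation property. Assume $\rho\ne\mu\otimes\mu$; then the variation property of Definition \ref{Property I} supplies $\hat{\eta},\epsilon_1>0$, $R=R(\hat{\eta})$, and test sets $\hat{A}_j,\hat{B}_j$ ($1\le j\le R$) with $\mu$-null boundary satisfying the prescribed gap. Fix $\epsilon<\epsilon_1$ small (to be shrunk) and let $\kappa=\kappa(\epsilon)$ be from the hypothesis. Applying Lemma \ref{Uniform Birkhoff ergodic theorem} to each pair $(\hat{A}_j,\hat{B}_j)$ uniformly in $p\in K$, with $\eta=\hat{\eta}/100$ and $\theta$ much smaller than $\epsilon/\h^3$, I obtain a threshold $N$ and a set $\mathcal{Z}\subset X\times X$ of $\rho$-measure $>1-R\theta$ on which the Birkhoff integrals of $\mathbf{1}_{\hat{A}_j\times T_{-p}\hat{B}_j}$ along diagonal $(T_t\times T_t)$-orbits converge uniformly to $\rho(\hat{A}_j\times T_{-p}\hat{B}_j)$.

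Apply the proposition's hypothesis with this $\epsilon$ and $N$ to get the $\mu$-preserving injection $S:W\to X$ and a set $U$ with $\mu(U)\ge 5\epsilon$. Because $\rho$ is $\h$-to-one, let $\psi_1,\dots,\psi_\h$ be measurable selections of its fibers. Partition $U$ by (i) which of \eqref{Forward relative shear} or \eqref{Backward relative shear} holds and (ii) which indices $(i,j)$ realise $y=\psi_i(x)$, $y'=\psi_j(Sx)$; pigeonholing over these $2\h^2$ buckets and intersecting with the $\mu$-almost full set on which $(x,y),(Sx,y')\in\mathcal{Z}$ (available because $S_*\mu=\mu$ so the conditions on $Sx$ are also typical), I pick a positive-measure selection of $x$ on which a single case and a single $(i,j)$ apply; WLOG the forward case.

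For such $x$, the shear states that on a $(1-\epsilon)$-density subset of $t\in[M,M+L]$, the pair $(T_tx,T_ty)$ is coordinate-wise $\epsilon$-close to the twisted pair $(T_{t+p_1}Sx,T_{t+p_2}y')=(T_{p_1}\times T_{p_2})(T_tSx,T_ty')$. Applying uniform Birkhoff separately to $(x,y)\in\mathcal{Z}$ and $(Sx,y')\in\mathcal{Z}$, and using that on the shear set $\mathbf{1}_C$ of one pair is dominated by $\mathbf{1}_{V_\epsilon(C)}$ of the other (and vice versa), I obtain for every Jordan-measurable $C\subset X\times X$
\begin{equation*}
\bigl|\rho(C)-(T_{p_1}\times T_{p_2})_*\rho(C)\bigr|\le O(\epsilon+\hat{\eta}/100).
\end{equation*}
The $T_t\times T_t$-invariance of $\rho$ rewrites $(T_{p_1}\times T_{p_2})_*\rho(A\times B)=\rho(A\times T_qB)$ with $q:=p_1-p_2$; specialising to $C=\hat{A}_{j^*}\times\hat{B}_{j^*}$ yields $|\rho(\hat{A}_{j^*}\times\hat{B}_{j^*})-\rho(\hat{A}_{j^*}\times T_q\hat{B}_{j^*})|\le O(\epsilon+\hat{\eta}/100)$. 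Invoking the variation property with $p=-q=p_2-p_1$ (which lies in $K$ after replacing $K$ by $K\cup(-K)$ if necessary) selects an index $j^*$ for which $|\rho(\hat{A}_{j^*}\times\hat{B}_{j^*})-\rho(V_\epsilon(\hat{A}_{j^*})\times T_qV_\epsilon(\hat{B}_{j^*}))|\ge\hat{\eta}$. Subtracting the two bounds and using that $\rho(V_\epsilon(\hat{A}_{j^*})\times T_qV_\epsilon(\hat{B}_{j^*}))-\rho(\hat{A}_{j^*}\times T_q\hat{B}_{j^*})\to 0$ as $\epsilon\to 0$ (by $\mu$-null boundary of the test sets and $\mu$-invariance of $T_q$) produces a contradiction for $\epsilon$ sufficiently small, so $\rho=\mu\otimes\mu$.

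The main obstacle is handling the non-diagonal twist $(T_{t+p_1}Sx,T_{t+p_2}y')$: the two coordinates are shifted by different amounts $p_1\ne p_2$, so the orbit-average of $(Sx,y')$ pushes forward to $(T_{p_1}\times T_{p_2})_*\rho$, and only the $T_t\times T_t$-invariance of $\rho$ lets one trade the first-coordinate shift for the second-coordinate shift required to match Definition \ref{Property I}. A secondary issue is that the variation gap is absolute-valued, demanding a genuinely two-sided estimate from the shear; this is available because coordinate-wise $\epsilon$-closeness is symmetric in the two pairs.
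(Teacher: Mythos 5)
Your overall architecture is the same as the paper's: assume \(\rho\neq\mu\otimes\mu\), take the test sets from the variation property, run \Cref{Uniform Birkhoff ergodic theorem} along the two sheared orbit pairs \((x,y)\) and \((Sx,y')\), use disintegration (plus \(S_{*}\mu=\mu\) and \(\mu(U)\geq 5\e\)) to find a point \(x\) for which the hypothesis-supplied fiber points lie in the Birkhoff-good set, and contradict the \(\hat{\eta}\)-gap. However, the central display of your argument is not justified as written. You apply \Cref{Uniform Birkhoff ergodic theorem} only to the pairs \((\hat{A_j},\hat{B_j})\), while the shear only yields \(\mathds{1}_{\hat{A_j}\times \hat{B_j}}(T_tx,T_ty)\leq \mathds{1}_{V_{\e}(\hat{A_j})\times V_{\e}(\hat{B_j})}(T_{t+p_1}Sx,T_{t+p_2}y')\) (and the analogous reverse domination); to convert the resulting time averages of the \emph{neighborhood} indicators along the companion orbit into \(\rho(V_{\e}(\hat{A_j})\times T_{-(p_2-p_1)}V_{\e}(\hat{B_j}))\) you must also feed the sets \(V_{\e}(\hat{A_j}),V_{\e}(\hat{B_j})\) into \Cref{Uniform Birkhoff ergodic theorem} \emph{before} invoking the hypothesis with the resulting threshold \(N\) --- which is exactly what the paper does. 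For the same reason the claim that your bound holds ``for every Jordan-measurable \(C\subset X\times X\)'' cannot be correct: the threshold \(N\) and the good set \(\mathcal{Z}\) in \Cref{Uniform Birkhoff ergodic theorem} depend on the sets, and the hypothesis only supplies shear on windows \([M,M+L]\) for the single \(N\) fixed in advance, so the estimate is available only for the finitely many sets chosen at that stage.

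A second, related issue is your endgame: you detour through \(\rho(\hat{A_{j^*}}\times T_q\hat{B_{j^*}})\) and then need \(\rho(V_{\e}(\hat{A_{j^*}})\times T_qV_{\e}(\hat{B_{j^*}}))-\rho(\hat{A_{j^*}}\times T_q\hat{B_{j^*}})\to 0\) as \(\e\to 0\), which requires \(\mu\)-null boundaries (or at least \(\mu(V_{\e}(\hat{A_{j^*}})\setminus \hat{A_{j^*}})\to 0\)) for the test sets; \Cref{Property I} asserts only measurability, so you are importing an extra regularity hypothesis. The paper avoids this step entirely: it keeps the \(\e\)-neighborhoods throughout and contradicts the variation gap directly, proving the two-sided estimate \(\rho(\hat{A}\times \hat{B})\leq \rho(V_{\e}(\hat{A})\times T_{-p}V_{\e}(\hat{B}))+\hat{\eta}/5\) together with its reverse for \(p=p_2-p_1\), which is precisely the quantity appearing in \Cref{Property I}. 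Both defects are repairable --- include the \(V_{\e}\)-sets in the uniform Birkhoff application and compare with the variation gap directly rather than through \(\rho(\hat{A_{j^*}}\times T_q\hat{B_{j^*}})\) --- and the repaired argument coincides with the paper's proof; but as written, the step producing \(|\rho(C)-(T_{p_1}\times T_{p_2})_{*}\rho(C)|\leq O(\e+\hat{\eta}/100)\) and the passage from neighborhoods back to the sets themselves are not established.
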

\begin{proof}
Suppose \(\rho\neq \mu \otimes \mu\), then we can assume \(\rho\) is a \(\h\)-to-one self-joining because \((X,(T_t)_{t\in\mathbb{R}},\mu,d)\) has the finite extension property.  Furthermore, for the given compact set \(K\), we can find \(\hat{\eta}\), \(R\), \(\e_1\) and sets \(\hat{A_j}\), \(\hat{B_j}\), \(V_{\e}(\hat{A_j})\) and \(V_{\e}(\hat{B_j})\)  for \(1\leq j \leq R\) since  \((X,(T_t)_{t\in\mathbb{R}},\mu,d)\) has the variation property. Take \(\e<\min\{\frac{\hat{\eta}}{1000},\e_1\}\). Apply \Cref{Uniform Birkhoff ergodic theorem} to sets \(\hat{A_j}\), \(\hat{B_j}\), \(V_{\e}(\hat{A_j})\), \(V_{\e}(\hat{B_j})\) and parameters \(\hat{\eta}/100, \frac{\e}{2\h}, \kappa\), we can obtain a set \(\mathcal{Z}_1\coloneqq\mathcal{Z}_1(\hat{\eta}/100,\frac{\e}{2\h},\kappa)\) with \(\rho(\mathcal{Z}_1)\geq 1-\frac{\e}{2\h}\) and a number \(N_1 \coloneqq N_1(\hat{\eta}/100, \frac{\e}{2\h}, \kappa)\) such that all of the following hold:
\[\left|\frac{1}{L} \int_{M}^{M+L} \mathds{1}_{A \times T_{-p}B}(T_tx,T_ty)dt-\rho(A\times T_{-p}B)\right|<\hat{\eta}/100,\]
\[\left|\frac{1}{L} \int_{M}^{M+L} \mathds{1}_{V_{\e}(A) \times T_{-p}V_{\e}(B)}(T_tx,T_ty)dt-\rho(V_{\e}(A) \times T_{-p}V_{\e}(B))\right|<\hat{\eta}/100,\]
\[\left|\frac{1}{L} \int_{M}^{M+L} \mathds{1}_{A \times T_{-p}B}(T_{-t}x,T_{-t}y)dt-\rho(A\times T_{-p}B)\right|<\hat{\eta}/100,\]
and
\[\left|\frac{1}{L} \int_{M}^{M+L} \mathds{1}_{V_{\e}(A) \times T_{-p}V_{\e}(B)}(T_{-t}x,T_{-t}y)dt-\rho(V_{\e}(A) \times T_{-p}V_{\e}(B))\right|<\hat{\eta}/100\]
for every \(p\in K\cup \{0\}\), \((x,y)\in \mathcal{Z}_1\) every pair of sets \(A\coloneqq \hat{A_j},B\coloneqq \hat{B_j}\) and \(M,L\geq N_1\) with \(L\geq \kappa M.\)
Let \[Y=\{x\in X: (x,y)\in \mathcal{Z}_1 \; \textit{for every}\; y\in \Omega_x\}.\]
Then, by the measure disintegration, we have
\[\frac{\e}{2\h}\geq\rho(\mathcal{Z}_1^c)=\int_X \rho_x(\mathcal{Z}_1^c) d\mu(x) \geq \frac{\mu(Y^c)}{\h}.\]
Therefore, \(\mu(Y)\geq 1-\e.\) Consider \(X'=(Y\cap W) \cap S^{-1}(Y\cap W)\), it follows that \(\mu(X'\cap U)>0\) because \(\mu(U)\geq 5\e\) by our assumption. Then pick \(x\in X'\cap U\) and \(y\in \Omega_x,y'\in \Omega_{Sx}\) as stated in the theorem. 
By \Cref{Property I}, for \(p=p_2-p_1\), there exist \(A\coloneqq \hat{A_j}\) and \(B\coloneqq \hat{B_j}\) for some \(1\leq j \leq R\) such that  
\begin{equation}
\label{Distance between measure of sets}
  |\rho(A \times B)-\rho (V_{\e}(A )\times T_{-p}V_{\e}(B))|\geq \hat{\eta}.  
\end{equation}
Without loss of generality, we assume that \eqref{Forward relative shear} is true.
On the one side, we have
\begin{align*}
        \rho(A\times B) &\leq \frac{1}{L} \int_{M}^{M+L} \mathds{1}_{A \times B}(T_tx,T_ty)dt +\frac{\hat{\eta}}{100} \\
        &\leq \frac{1}{L} \int_{M}^{M+L} \mathds{1}_{V_{\e}(A) \times B}(T_{t+p_1}Sx,T_ty)dt +\frac{\hat{\eta}}{100}+\frac{\e L}{L}\\
        &\leq \frac{1}{L} \int_{M}^{M+L} \mathds{1}_{V_{\e}(A) \times V_{\e}(B)}(T_{t+p_1}Sx,T_{t+p_2}y')dt +\frac{\hat{\eta}}{100}+\frac{2\e L}{L}\\
        &\leq \frac{1}{L} \int_{M}^{M+L} \mathds{1}_{V_{\e}(A) \times T_{-p}V_{\e}(B)}(T_{t}Sx,T_{t}y')dt +\frac{\hat{\eta}}{10}\\
        &\leq \rho(V_{\e}(A)\times T_{-p}V_{\e}(B))+\frac{\hat{\eta}}{5},\\
 \end{align*}
where the second and the third inequalities follow from \eqref{Forward relative shear}.
On the other side, using similar arguments, we can also obtain
\[ \rho(V_{\e}(A)\times T_{-p}V_{\e}(B)) \leq \rho(A\times B)+ \frac{\hat{\eta}}{5}.\]
This is a contradiction to \eqref{Distance between measure of sets}, hence \(\rho=\mu\otimes \mu.\)
\end{proof}
Observe that if  \(\rho\) is an ergodic self-joining of \((X,(T_t)_{t\in\mathbb{R}},\mu,d)\) which is not the product measure, then \(\rho\) and \((Id\times T_p)_{\ast}  \rho\) should be mutually singular when \(p\neq 0\). Based on this fact, we have the following definition.
\index{\(\e_2\)}
\begin{DEF}
\label{scale property}
Let \((X,(T_t)_{t\in\mathbb{R}},\mu,d)\) be an ergodic measure-preserving flow system. Suppose for any ergodic self-joining \(\rho\) which is not the product measure, any fixed \({\h}\in \mathbb{N}\) and any fixed compact set \(K\subset \mathbb{R}\setminus \{0\}\), there exist \(\e_2>0\), \(R'\in \mathbb{N}\) and measurable sets \(\hat{A_1},....,\hat{A_{R'}}, \hat{B_1},...,\hat{B_{R'}} \subset X\) such that for every \(0<\e<\e_2\) and \(p\in K\), \(\exists 1\leq j \leq R'\) satisfying
\[\rho(\hat{A_j} \times \hat{B_j})\geq 4 {\h}
 \rho (V_{\e}(\hat{A_j})\times T_{-p}V_{\e}(\hat{B_j})).\] Then we say  \((X,(T_t)_{t\in\mathbb{R}},\mu,d)\) has \textit{scale property}. 
\end{DEF}
\noindent For an ergodic self-joining \(\rho\) of \((X,(T_t)_{t\in\mathbb{R}},\mu,d)\) with the scale property which is not the product measure, denote 
\begin{equation}
\label{lower bound of R' sets}
  c(\rho,K,\h)\coloneqq \displaystyle\frac{1}{100}\min_{1\leq j \leq R'} \rho(\hat{A_j}\times \hat{B_{j}}) , 
\end{equation}
where \(R'\), \(\hat{A_j}\) and \(\hat{B_{j}}\) are given in \Cref{scale property}. 
The following result provides a way to show that a finite-to-one ergodic self-joining \(\rho\) is 2-simple. Recall the definitions in \eqref{Carrying set of abstract measure} and \eqref{fiber of abstract measure}. Suppose \(\rho\) is \(\h\)-to-one but not 2-simple, this means for a base point \(x\), there are at least two points \(y_1,y_2\in \Omega_x\) which are not on the same orbit. The main difference between the following proposition and other Ratner-type arguments in \cite{Multiple} and \cite{Disjoint} is that the distance between \(y_1\) and \(y_2\) is arbitrary. However, if one can show some relations between the base point \(x\) and some fiber point \(y\in \Omega_x\), this will allow us to obtain relations between \(y\) and other fiber points in \(\Omega
_x\) if \(\h\geq 2.\) We will show in \Cref{Small shear general case} that when the base point \(x\) and the fiber point \(y\in \Omega_x\) is a ``good'' pair, then the conditions in \Cref{Simpleness Criteria} will be satisfied. 
\begin{PROP}
\label{Simpleness Criteria}
Let \(\rho\) be an ergodic self-joining of \((X,(T_t)_{t\in\mathbb{R}},\mu,d)\) with scale property and \(\rho(\Omega_{\rho})=1\) as in \eqref{Carrying set of abstract measure}. Let \(K\subset \mathbb{R}\setminus\{0\}\) be a compact set, and \(\h\geq 2\). Then, there exist \(0<\e<\min\{\frac{c(\rho,K,\h)}{100}, \e_2\}\) and \(N>0\), such that there are \textbf{no} \(\kappa\coloneqq \kappa(\e)>0\), \(0<c_1,c_2\leq c(\rho,K,\h)\) and a measurable set \(B\subset X\) with \(\mu(B)\geq c_1\) satisfying for every \(x\in B\), there are \(p\in K\), and \(y_*\coloneqq y_i\in\Omega_x\) for some \(1\leq i \leq \h\) 
so that at least one of the following
\begin{equation}
\label{Forward ratio estimate}
   \min_{1\leq j \leq {\h}}d(T_t y_*,T_{t+p}y_j)\leq \e\ 
\end{equation}
and
\begin{equation}
\label{Backward ratio estimate}
   \min_{1\leq j \leq {\h}}d(T_{-t} y_*,T_{-t+p}y_j)\leq \e
\end{equation}
holds for every \(t\in \mathcal{I},\) where \(\mathcal{I}\subset[M,M+L]\) with \(L\geq N\), \(L\geq \kappa M\) and \(|\mathcal{I}|\geq (1-c_2)L .\)
\end{PROP}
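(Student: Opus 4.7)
I argue by contradiction: if the claimed parameters $\kappa, c_1, c_2, B$ existed, then a uniform Ratner-style averaging argument would force $\rho(\hat{A}_j \times \hat{B}_j)$ to be far below $100\,c(\rho,K,\h)$, contradicting the definition \eqref{lower bound of R' sets}. Concretely, I first fix the error tolerance $\eta := c(\rho,K,\h)/(100(\h+1))$ and invoke the scale property to produce families $\hat{A}_1,\dots,\hat{A}_{R'}$, $\hat{B}_1,\dots,\hat{B}_{R'}$ such that for each $p\in K$ some index $j$ satisfies $\rho(\hat{A}_j\times\hat{B}_j)\ge 4\h\,\rho(V_\e(\hat{A}_j)\times T_{-p}V_\e(\hat{B}_j))$. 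Supposing $\kappa,c_1,c_2,B$ are given, I apply \Cref{Uniform Birkhoff ergodic theorem} with parameters $(\eta,\theta:=c_1/(10R'\h),\kappa)$ to each of the $2R'$ pairs $(\hat{A}_j,\hat{B}_j)$ and $(V_\e(\hat{A}_j),V_\e(\hat{B}_j))$. Intersecting the exceptional sets yields $\mathcal Z$ with $\rho(\mathcal Z)\ge 1-c_1/(5\h)$ on which Birkhoff averages agree with $\rho$-measures to within $\eta$, uniformly in $p\in K$, whenever $M,L\ge N$ and $L\ge\kappa M$. A $\h$-to-one disintegration of $\rho$ along $\pi_1$ then gives $\mu(Y)\ge 1-c_1/5$ where $Y=\{x:(x,y)\in\mathcal Z\text{ for every }y\in\Omega_x\}$, so $\mu(B\cap Y)>0$.

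Pick $x\in B\cap Y$. The hypothesis supplies $p\in K$ and $y_*\in\Omega_x$ (say the forward case of \eqref{Forward ratio estimate} holds) together with $\mathcal I\subset[M,M+L]$ with $|\mathcal I|\ge(1-c_2)L$ so that $\min_{1\le \ell\le \h} d(T_ty_*,T_{t+p}y_\ell)\le\e$ for all $t\in\mathcal I$. Let $A,\widetilde B$ be the scale-property sets matching this $p$, and partition $\mathcal I$ into disjoint pieces $\mathcal I_1,\dots,\mathcal I_\h$ according to the least index achieving the minimum. The key observation is that for $t\in\mathcal I_\ell$, whenever $(T_tx,T_ty_*)\in A\times\widetilde B$ one has $T_{t+p}y_\ell\in V_\e(\widetilde B)$, so $(T_tx,T_ty_\ell)\in A\times T_{-p}V_\e(\widetilde B)$. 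Splitting the integral over $[M,M+L]$ as $\sum_\ell\int_{\mathcal I_\ell}+\int_{[M,M+L]\setminus\mathcal I}$, using the trivial bound $1$ on the latter piece, and using the lower Birkhoff bound on $(x,y_*)$ together with upper Birkhoff bounds on each $(x,y_\ell)$ produces
\begin{equation*}
\rho(A\times\widetilde B)\ \le\ c_2\ +\ \h\,\rho(V_\e(A)\times T_{-p}V_\e(\widetilde B))\ +\ (\h+1)\eta.
\end{equation*}

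Inserting the scale bound $\rho(V_\e(A)\times T_{-p}V_\e(\widetilde B))\le\rho(A\times\widetilde B)/(4\h)$ yields $\rho(A\times\widetilde B)\le(4/3)(c_2+(\h+1)\eta)\le(4/3)(1+1/100)\,c(\rho,K,\h)<2\,c(\rho,K,\h)$, directly contradicting $\rho(A\times\widetilde B)\ge 100\,c(\rho,K,\h)$. The backward-orbit case is handled identically using the symmetric statement in \Cref{Uniform Birkhoff ergodic theorem}. The principal subtlety is that $c_2$ is a free parameter in the statement that can be arbitrarily small, so $\eta$ must be chosen in advance depending only on $c(\rho,K,\h)$ and $\h$; the factor $4\h$ in the scale property is precisely what makes this work, as it forces the coefficient of $\rho(A\times\widetilde B)$ on the right-hand side of the derived inequality to drop from $1$ to $1-\h/(4\h)=3/4$, yielding the required absorption irrespective of how small $c_2$ is.
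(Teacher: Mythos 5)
Your proof is correct and follows essentially the same route as the paper: a contradiction argument combining the scale property with \Cref{Uniform Birkhoff ergodic theorem} applied to the pairs \(\hat{A_j},\hat{B_j}\) and their \(\e\)-neighborhoods (uniformly in \(p\in K\cup\{0\}\)), a disintegration step producing a point \(x\in B\) whose entire fiber lies in the Birkhoff-good set, and absorption of \(\rho(\hat{A_*}\times\hat{B_*})\) via the factor \(4\h\) from the scale property. Your partition of \(\mathcal{I}\) by the minimizing index is just a rearrangement of the paper's pigeonhole selection of a single \(y_{j_0}\), giving the identical inequality \(\rho(\hat{A_*}\times \hat{B_*})\leq c_2+\h\,\rho(V_{\e}(\hat{A_*})\times T_{-p}V_{\e}(\hat{B_*}))+(\h+1)\eta\).
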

\begin{proof}
Suppose, on the contrary, for every \(\e\) and \(N\), we can find \(\kappa(\e)\), \(c_1,c_2\) and \(B\) such that at least one of \eqref{Forward ratio estimate} and \eqref{Backward ratio estimate} is true. Take \(\eta\) to be a small number such that
\(c(\rho,K,\h) \geq {\h} \eta.\)
Take \(\theta\leq \frac{c_1}{200\h}\), applying \Cref{Uniform Birkhoff ergodic theorem} to \(R'\) pairs of sets \(\hat{A_j}\), \(\hat{B_j}\) in \Cref{scale property}, there exist \(N_2\coloneqq N_2
(\eta, \theta ,\kappa)\) and a set \(\mathcal{Z}_2\coloneqq \mathcal{Z}_2(\eta, \theta ,\kappa)\) with \(\rho(\mathcal{Z}_2)\geq 1-\theta\), such that for every \(L, M \geq N_2 \) and \(p\in K \cup{\{0\}}\), we have:
\begin{equation}
\label{Birkhoff Ratio Smaller}
  \left|\frac{1}{L}\int _{M}^{M+L} \mathds{1}_{\hat{A_j}\times T_{-p}\hat{B_j}}(T_t x , T_t y)d t- \rho(\hat{A_j}\times T_{-p}\hat{B_j})\right| \leq \eta  
\end{equation}
and
\begin{equation}
\label{Birkhoff Ratio Larger}
  \left|\frac{1}{L}\int _{M}^{M+L} \mathds{1}_{V_{\e}(\hat{A_j})\times T_{-p}V_{\e}(\hat{B_j})}(T_{t} x , T_{t} y)d t- \rho(V_{\e}(\hat{A_j})\times T_{-p}V_{\e}(\hat{B_j}))\right| \leq \eta  
\end{equation}
for every \((x,y)\in \mathcal{Z}_2\) and \(1\leq j \leq R'.\)
Let \[\mathcal{F}=\{x\in X: (x,y_j)\in \mathcal{Z}_2 \textit{ for every }1\leq j \leq {\h}\}.\]
From the measure disintegration,  
\[\theta\geq \rho(\mathcal{Z}_2^c)=\int_{\mathcal{F}^c} \rho_x(\mathcal{Z}_2^c) d\mu(x) \geq \frac{1}{{\h}}\mu(\mathcal{F}^c).\]
Therefore, \(\mu(\mathcal{F})\geq 1-{\h}\theta\geq 1-\frac{c_1}{2}\) and \(\mu(\mathcal{F} \cap B)>0\)  because \(\mu(B)\geq c_1.\)
Pick \(x\in \mathcal{F} \cap B.\) By our assumption,  there exist \(y_* \), \(M,L \geq N=N_2\) with \(L\geq \kappa M\), and \(p\in K\) such that for every \(t\in \mathcal{I}\)  with \(|\mathcal{I}|\geq(1-c_2)L\), there is \(y_j\) (\(j\) depending on \(t\)) such that 
\begin{equation}
\label{Distance between fiber elements}
  d(T_ty_*,T_{t+p}y_j) \leq \e.  
\end{equation}
By \Cref{scale property}, for such \(p\), there exist \(\hat{A_*}\) and \(\hat{B_*}\), such that 
\begin{equation}
\label{lower bound of measure}
  \rho(\hat{A_*}\times \hat{B_*})\geq 4{\h} \rho(V_{\e}(\hat{A_*})\times T_{-p}V_{\e}(\hat{B_*})).  
\end{equation}
Apply \eqref{Birkhoff Ratio Smaller} and \eqref{Birkhoff Ratio Larger} to  \(\hat{A_*}\) and \(\hat{B_*}\), we have
\begin{equation}
\label{upper bound of measure}
\rho(\hat{A_*}\times \hat{B_*}) \leq \frac{1}{L}\int _{M}^{M+L} \mathds{1}_{\hat{A_*}\times \hat{B_*}}(T_t x , T_t y_*)d t + \eta,
\end{equation}
and
\begin{equation}
\label{lower bound of larger measure sets}
 \rho(V_{\e}(\hat{A_*})\times V_{\e}(T_{-p}\hat{B_*})) \geq \frac{1}{L}\int _{M}^{M+L} \mathds{1}_{V_{\e}(\hat{A_*})\times T_{-p}V_{\e}(\hat{B_*})}(T_{t} x , T_{t} y_j)d t-\eta,  
\end{equation}
for every \(1\leq j \leq \h\) because \(x\in \mathcal{F}.\)
By finiteness of fiber elements and \eqref{Distance between fiber elements}, \(\exists y'\coloneqq y_{j_0}\) for some fixed \(1\leq j_0 \leq \h\) such that
\begin{equation}
\label{compare nearby orbits}
\begin{aligned}
    \frac{1}{L}&\int _{M}^{M+L} \mathds{1}_{V_{\e}(\hat{A_*})\times T_{-p}V_{\e}(\hat{B_*})}(T_{t} x , T_{t} y')d t  \\
   \geq  \frac{1}{{\h}L} &\int_{M}^{M+L}  \mathds{1}_{\hat{A_*}\times \hat{B_*}}(T_t x , T_t y_*)d t -\frac{c_2}{\h}.\\
\end{aligned}
\end{equation}
Combine \eqref{lower bound of measure}, \eqref{upper bound of measure}, \eqref{lower bound of larger measure sets} and \eqref{compare nearby orbits}, we can obtain 
\[\frac{1}{4{\h}}\rho(\hat{A_*}\times \hat{B_*}) \geq \rho(V_{\e}(\hat{A_*})\times T_{-p}V_{\e}(\hat{B_*})) \geq \frac{1}{{\h}} \rho(\hat{A_*}\times \hat{B_*}) -\frac{\eta}{\h}-\eta-\frac{c_2}{\h}.\]
Since \(c_2\leq c(\rho,K,\h)\leq \frac{\rho(\hat{A_*}\times \hat{B_*})}{100}\), this implies \(\rho(\hat{A_*}\times \hat{B_*})\leq 8 \h \eta\),
which is a contradiction to the fact that \(\rho(\hat{A_*}\times \hat{B_*})\geq 100 {\h}\eta.\)

\end{proof}

Now, we claim that the special flow system \((X^f,(T_t^f)_{t\in \mathbb{R}}, \mu^f)\) has variation and scale properties. 
\begin{LEM}
\label{Difference of sets}
For \(\alpha\in \mathcal{D}\) in \Cref{Definition of rotation angle} and \(f\) satisfies the condition in \Cref{Definition of roof}, the special flow system \((X^f,(T_t^f)_{t\in \mathbb{R}}, \mu^f)\) has variation property.
\end{LEM}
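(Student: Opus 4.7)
The plan is to use that any non-product ergodic self-joining $\nu$ of the special flow is mutually singular with respect to each of its second-coordinate translates $(\mathrm{Id} \times T_p^f)_*\nu$ for $p \neq 0$, and then extract a finite family of test sets uniform over the compact $K$ by a continuity-and-compactness argument. The first key step is to rule out $(\mathrm{Id} \times T_p^f)_*\nu = \nu$ for $p \in \mathbb{R} \setminus \{0\}$. Because the special flow has the SWR property, $\nu$ is a finite extension joining with $\h$ fiber maps $\psi_1, \ldots, \psi_{\h}$ as in \eqref{The support of joining}. If $\nu$ were $(\mathrm{Id} \times T_p^f)$-invariant, then for $\mu^f$-a.e.\ $(x,s)$ the map $T_p^f$ would permute the finite fiber $\{\psi_i(x,s)\}_{i=1}^{\h}$; iterating $\h!$ times, $T_{p\h!}^f$ would fix each $\psi_i(x,s)$ pointwise on a positive-measure set. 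Since the special flow is mixing by Sinai--Khanin, hence free, this forces $p\h! = 0$ and so $p = 0$. Consequently $\nu$ and $(\mathrm{Id} \times T_p^f)_*\nu$ are two distinct ergodic probability measures for $(T_t^f \times T_t^f)_{t \in \mathbb{R}}$ and are therefore mutually singular.

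For each $p \in K$, mutual singularity yields a measurable set separating $\nu$ from $(\mathrm{Id} \times T_p^f)_*\nu$, and by approximating such a set with a finite disjoint union of rectangles whose factors are continuity sets of $\mu^f$, then pigeonholing, I would extract a single product $A_p \times B_p$ with
\begin{equation}
\notag
|\nu(A_p \times B_p) - \nu(A_p \times T_{-p}^f B_p)| \geq 3\eta_p
\end{equation}
for some $\eta_p > 0$. Since $B_p$ is a $\mu^f$-continuity set, dominated convergence gives continuity of the map $q \mapsto \nu(A_p \times T_{-q}^f B_p)$, so this inequality persists with constant $2\eta_p$ on an open neighborhood $U_p \ni p$. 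A finite subcover $U_{p_1}, \ldots, U_{p_R}$ of the compact $K$ then produces the desired family $\hat A_j = A_{p_j}$, $\hat B_j = B_{p_j}$, and I set $\hat\eta = \min_j \eta_{p_j}$.

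It remains to pass from the sets themselves to their $\e$-neighborhoods. Continuity of each $\hat A_j, \hat B_j$ forces $\mu^f(V_\e(\hat A_j) \setminus \hat A_j) \to 0$ and similarly for $\hat B_j$, so by the bound $\nu(E \times F) \leq \min\{\mu^f(E), \mu^f(F)\}$ on the marginals, the quantity $|\nu(V_\e(\hat A_j) \times T_{-p}^f V_\e(\hat B_j)) - \nu(\hat A_j \times T_{-p}^f \hat B_j)|$ tends to zero uniformly in $p \in K$ as $\e \to 0$. Choosing $\e_1$ so that this error is at most $\hat\eta$ and applying the triangle inequality then delivers the defining inequality of the variation property. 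I expect the main difficulty to lie in the freeness/permutation argument ruling out $(\mathrm{Id} \times T_p^f)_*\nu = \nu$; the compactness and $\e$-approximation steps are routine once that rigidity is in hand.
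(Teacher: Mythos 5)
Your proposal is correct, and it is worth noting that it is genuinely more self-contained than the paper: the paper's ``proof'' of this lemma is a single deferral to the proof of \cite[Theorem 4]{Multiple}, from which the variation property is abstracted, whereas you reconstruct the underlying mechanism in full. Your chain of steps is sound: ruling out \((\mathrm{Id}\times T_p^f)_*\nu=\nu\) for \(p\neq 0\) via the finite-fiber permutation argument (formally this runs through the disintegration identity \((T_p^f)_*\nu_{(x,s)}=\nu_{(x,s)}\) a.e., after which \(T^f_{p\h!}\) fixes a full-measure set and aperiodicity forces \(p=0\); weak mixing of the flow already suffices here, you do not need Sinai--Khanin mixing), hence mutual singularity of the two ergodic measures for the diagonal flow; approximation of a separating set by finitely many rectangles whose factors have \(\mu^f\)-null boundary, plus pigeonholing; continuity of \(q\mapsto\nu(A_p\times T^f_{-q}B_p)\), which in a careful write-up should be justified by the a.e.\ continuity of \(q\mapsto T_q^f y\) (the almost continuity of the special flow as in \Cref{Almost continuous}, since the flow is not continuous at the roof) together with \(\mu^f(\partial B_p)=0\) and the fact that the second marginal of \(\nu\) is \(\mu^f\); a finite subcover of \(K\); and finally the uniform-in-\(p\) bound \(|\nu(V_\e(\hat{A_j})\times T^f_{-p}V_\e(\hat{B_j}))-\nu(\hat{A_j}\times T^f_{-p}\hat{B_j})|\le \mu^f(V_\e(\hat{A_j})\setminus\hat{A_j})+\mu^f(V_\e(\hat{B_j})\setminus\hat{B_j})\), which vanishes as \(\e\to 0\) because the sets are continuity sets. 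This delivers exactly the quantifier structure of \Cref{Property I}. Your reliance on the finite-extension dichotomy (SWR plus weak mixing from \cite{Multiple}) is not circular, since that dichotomy is proved there independently of the variation property; what your route buys is a proof that does not treat the cited argument as a black box, at the cost of length, while the paper's citation buys brevity.
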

\begin{proof}
See proof of \cite[Theorem 4]{Multiple}.
\end{proof}
\begin{LEM}
\label{Choice of sets}
For \(\alpha\in \mathcal{D}\) in \Cref{Definition of rotation angle} and \(f\) satisfies the condition in \Cref{Definition of roof}, the special flow system \((X^f,(T_t^f)_{t\in \mathbb{R}}, \mu^f)\) has scale property.
\end{LEM}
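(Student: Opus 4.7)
The plan is to upgrade the construction behind \Cref{Difference of sets} (proved in \cite[Theorem 4]{Multiple}) from the additive separation $\hat\eta$ of the variation property to the multiplicative factor $4\h$ demanded here. I will exploit three ingredients: mutual singularity of $\rho$ and its shifts $(\mathrm{Id}\times T_p^f)_\ast\rho$ for $p\in K$, Borel regularity of $\rho$ on the Polish space $X^f\times X^f$, and compactness of $K$.

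Fix an ergodic self-joining $\rho\neq\mu^f\otimes\mu^f$. For every $p\in K$ the measure $(\mathrm{Id}\times T_p^f)_\ast\rho$ is again $T_t^f\times T_t^f$-invariant and ergodic; since the stabilizer $\{p:\rho=(\mathrm{Id}\times T_p^f)_\ast\rho\}$ is a closed subgroup of $\R$ disjoint from $K$ (otherwise the weak mixing of $T_t^f$ would force $\rho=\mu^f\otimes\mu^f$), the two measures are unequal, and distinct ergodic invariant probabilities are mutually singular: $\rho\perp(\mathrm{Id}\times T_p^f)_\ast\rho$. Pick a Borel set $F_p$ with $\rho(F_p)=1$ and $\rho((\mathrm{Id}\times T_{-p}^f)F_p)=0$, and approximate $F_p$ from inside by a finite disjoint union of open rectangles $A_i\times B_i$ whose boundaries $\partial A_i$, $\partial B_i$ are $\mu^f$-null. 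The pigeonhole principle supplies a single rectangle $(A_p,B_p)$ with $\rho(A_p\times B_p)\geq c_p>0$ and $\rho(A_p\times T_{-p}^f B_p)\leq c_p/(16\h)$. Since the boundaries are null, for $\e_p>0$ small enough one has $\rho(V_{\e_p}(A_p)\times T_{-p}^f V_{\e_p}(B_p))\leq c_p/(8\h)$.

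To make the bound uniform in $p$, open $B_p$ slightly to an open set $B_p^\sharp\supset\bar{B_p}$ with $\mu^f$-null boundary and $\rho(A_p\times T_{-p}^f V_{\e_p}(B_p^\sharp))\leq c_p/(4\h)$. Continuity of $T_t^f$ furnishes an open neighborhood $U_p\subset K$ of $p$ on which $T_{-q}^f V_{\e_p}(B_p)\subset T_{-p}^f V_{\e_p}(B_p^\sharp)$ for all $q\in U_p$, so that
\[\rho\bigl(V_{\e_p}(A_p)\times T_{-q}^f V_{\e_p}(B_p)\bigr)\leq c_p/(4\h)\quad\text{for every }q\in U_p.\]
Extract a finite subcover $\{U_{p_1},\ldots,U_{p_{R'}}\}$ of the compact $K$, and set $\hat A_j=A_{p_j}$, $\hat B_j=B_{p_j}$, $\e_2=\min_j\e_{p_j}$. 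For any $0<\e<\e_2$ and $q\in K$, the index $j$ with $q\in U_{p_j}$ witnesses the scale-property inequality $\rho(\hat A_j\times\hat B_j)\geq 4\h\,\rho(V_\e(\hat A_j)\times T_{-q}^f V_\e(\hat B_j))$.

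The main obstacle is this uniform-in-$p$ step: the map $q\mapsto\rho(V_\e(A)\times T_{-q}^f V_\e(B))$ is only semicontinuous in general, so a pointwise separation does not automatically persist on a neighborhood. The remedy is the slight enlargement $B_p\mapsto B_p^\sharp$, together with the slack deliberately engineered between $c_p/(16\h)$ and $c_p/(4\h)$: continuity of the flow lets one dominate $T_{-q}^f V_{\e_p}(B_p)$ by the fixed larger set $T_{-p}^f V_{\e_p}(B_p^\sharp)$ for $q$ near $p$, and compactness of $K$ then closes the argument with a finite subcover.
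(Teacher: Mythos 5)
Your route to the initial rectangles is genuinely different from the paper's and is viable in outline: the paper produces \(A_p,B_p\) by disintegrating \(\nu\) over the first coordinate and contradicting atomicity of the fiber measures of the finite-extension joining, whereas you use that \(\rho\) and \((\mathrm{Id}\times T_p^f)_*\rho\) are distinct ergodic measures for the diagonal flow (distinct because weak mixing makes every time-\(p\) map ergodic, so \((\mathrm{Id}\times T_p^f)\)-invariance of \(\rho\) would force a.e.\ conditional measure to equal \(\mu^f\) and hence \(\rho=\mu^f\otimes\mu^f\)), hence mutually singular. That part is fine. However, your approximation step is not: a Borel set \(F_p\) with \(\rho(F_p)=1\) and \(\rho((\mathrm{Id}\times T_{-p}^f)F_p)=0\) in general has empty interior, so it cannot be approximated \emph{from inside} by open rectangles at all. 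The repair is to approximate \(F_p\) in the measure \(\rho+(\mathrm{Id}\times T_p^f)_*\rho\) by a finite union of rectangles with \(\mu^f\)-null boundary sides (the rectangle algebra is dense in the product measure algebra); this simultaneously makes \(\rho(E)\) close to \(1\) and \(\sum_i\rho(A_i\times T_{-p}^fB_i)\) small, and then the pigeonhole (run so as to also guarantee \(\rho(A_p\times B_p)>0\)) gives your \(c_p\) and the \(16\h\) ratio.

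The more serious gap is the uniformization in \(p\). You justify the inclusion \(T_{-q}^f V_{\e_p}(B_p)\subset T_{-p}^f V_{\e_p}(B_p^{\sharp})\) for \(q\) near \(p\) by ``continuity of \(T_t^f\)'', but this special flow is not continuous on \((X^f,d)\): the roof has a logarithmic singularity and is unbounded, and a point just below the ceiling is moved a macroscopic distance by an arbitrarily small time, so no such set inclusion holds. This is precisely why the paper introduces almost continuity (\Cref{Almost continuous}) and why its proof of \Cref{Choice of sets} writes \(T_{-p'}^fV_{\delta_p}B_p\subset T_{-p}^f\bigcup_{t\in[-\delta_p,\delta_p]}T_t^fV_{\delta_p}B_p\), intersects with the almost-continuity set \(X(\e')\), and pays the extra error \(\mu^f(X(\e')^c)\le\e_p\), which the numerical slack absorbs. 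Your argument can be repaired in exactly this way, since you only need a measure inequality rather than a pointwise inclusion: estimate on \(X(\e')\) as you intended, and bound the contribution of \(X(\e')^c\) through the second marginal, choosing the almost-continuity parameters small compared with \(c_p/\h\). With these two fixes your proof closes, and your endgame (compactness of \(K\), finite subcover, \(\e_2=\min_j\e_{p_j}\)) coincides with the paper's.
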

\begin{proof}
Let \(\nu\) be an ergodic joining of \((X^f,(T_t^f)_{t\in \mathbb{R}}, \mu^f)\) that is not the product measure. Fixed a compact set \(K\subset \mathbb{R}\setminus \{0\}\) and an integer \(\h\geq 1.\) For every \(p\in K,\) we claim that there are measurable sets \(A_p, B_p\subset X^f\) with \(\mu^f(\partial A_p)=\mu^f(\partial B_p)=0\) satisfying
\(\nu(A_p \times B_p)\geq 16 {\h}
 \nu (A_p \times T_{-p}^fB_p).\)
Suppose this is not true, by measure disintegration in \cite{measuretheory}, for any measurable sets \(A\) and \(B\), we have
\(\nu(A\times B)= \int_A \nu_{y} (B) d\mu^f(y).\)
Now, consider the following family of sets:
\[\mathcal{Q}=\{C(x,r)\subset X^f: \text{\(C(x,r)\) is a cube with center \(x\in \mathbb{Q}^2\) and radius \(r\in \mathbb{Q}\)}\}.\]
Then for every \(B\in \mathcal{Q},\) and \(\mu^f\)-almost every \( y\in X^f,\) \(\nu_y(B)<16\h\nu_y(T_{-p}^fB).\)  However, when \(\h=1\), one can choose a cube \(B\in \mathcal{Q}\) with small diameter containing the atom so that \(\nu_y(B)=1\) while \(\nu_y(T_{-p}^fB)=0,\) which is a contradiction. When \(\h> 1\), note that for \(a.e.\) \(y\), and any atom \(z\) in fiber of \(y\), we have \(\nu_y(\{T_t^fz: t\neq 0\})=0.\) Otherwise, \(\nu_y\) will be an infinite measure due to ergodicity of \(\nu\) and aperiodicity of \(T_t^f.\) Since we only have a finite number of atoms and \(p\) is in the given compact set \(K\), we can also choose a cube \(B\in \mathcal{Q}\) with small diameter containing an atom so that \(\nu_y(B)=\frac{1}{\h}\) but \(\nu_y(T_{-p}^fB)=0\). Therefore, we prove the claim. Choose \(0<\e_p< \nu(A_p\times T_{-p}^fB_p).\) Since \(\mu^f(\partial A_p)=\mu^f(\partial B_p)=0\), there exists \(0<\e'<\e_p\) so that \(\nu(V_{2\e'}A_p\times T_{-p}^f V_{2\e'}B_p)\leq 2 \nu(A_p\times T_{-p}^f B_p)\). Recall \Cref{Almost continuous}, note that \(T_t^f\) is an almost continuous flow, which follows from its definition. 
For such \(\e',\) there exists \(0<\delta_p<\e'/2\) such that \(d(T_t^fx,T_{t'}^fx)<\e'\) for every \(x\in X(\e')\) and \(t,t'\in[-\delta_p,\delta_p].\) Moreover,  \(\mu^f(X(\e'))\geq 1-\e_p.\)
For any \(p'\in[p-\delta_p,p+\delta_p]\), we can then obtain 
\begin{align*}
   &\nu(V_{\delta_p} A_p \times T_{-p'}^f V_{\delta_p} B_p)\\
\leq &\nu\left(V_{\delta_p} A_p \times T_{-p}^f \bigcup_{t\in[-\delta_p,\delta_p]} T_t^f V_{\delta_p}B_p\right)\\
\leq & \nu\left(V_{\delta_p} A_p \times T_{-p}^f \left(\bigcup_{t\in[-\delta_p,\delta_p]} T_t^f V_{\delta_p}B_p\cap X(\e')\right)\right)\\
+& \nu\left(V_{\delta_p} A_p \times T_{-p}^f \left(\bigcup_{t\in[-\delta_p,\delta_p]} T_t^f V_{\delta_p}B_p\cap X(\e')^c\right)\right)
\\\leq & \nu\left(V_{\delta_p} A_p \times T_{-p}^f V_{2\delta_p}B_p\right)+\mu^f(X(\e')^c)\\
\leq & 2\nu(A_p\times T_{-p}^f B_p)+\e_p\\
\leq & 4\nu(A_p\times T_{-p}^f B_p).
\end{align*}
This implies \(\nu(A_p\times B_p)\geq 4\h \nu(V_{\delta_p} A_p \times T_{-p'}^f V_{\delta_p} B_p).\)
Consider an open covering of \(K\)
\[\bigcup_{p\in K} (p-\frac{\delta_p}{3},p+\frac{\delta_p}{3}).\]
Since \(K\) is a compact set, there is a finite open covering,
\[\bigcup_{i=1}^{R'} (p_i-\frac{\delta_{p_i}}{3},p_i+\frac{\delta_{p_i}}{3})\]
where \(p_i \in K.\)
Pick \(\e_2=\frac{1}{2}\displaystyle \min_{1\leq i\leq R'} \{\delta_{p_i}\}.\)
Hence, there exist sets \(\hat{A_1},....,\hat{A_{R'}}\), \(\hat{B_1},...,\hat{B_{R'}} \) such that for every \(p\in K\) and \(\e<\e_2\), \(\exists 1\leq j \leq R'\) satisfying
\[\nu(\hat{A_j} \times \hat{B_j})\geq 4 {\h}
 \nu (V_{\e}(\hat{A_j} )\times T_{-p}^fV_{\e}(\hat{B_j})) 
.\] 
\end{proof}
The following lemma, together with \Cref{Relative shear special flow}, will be used to show that the self-joining \(\nu\) of the special flow system \((X^f,(T_t^f)_{t\in\mathbb{R}},\mu^f)\) with \(\nu(\Omega_{\nu})=1\) is 2-simple. Then we can claim that the special flow system has the minimal self-joining property by \Cref{Theorem of centralizer}. The proof of the following lemma is similar to the proof in \Cref{Small shear flow}. This completes the proof of \Cref{Main} of the first case using \Cref{Simpleness Criteria}. Let's denote \index{\(\Omega(x,s)\):Fiber set of base point \((x,s)\)}
\begin{equation}
\label{Definition of fiber of (x,s)}
  \Omega(x,s)=\{(y^j,s^j): ((x,s),(y^j,s^j))\in \Omega_{\nu}, 1\leq j \leq {\h}\}.  
\end{equation}
Recall that \(G_k=\{((x,s_1),(y,s_2))\in X^f\times X^f: (x,y) \textit{\;is a good pair of order k}\}\) where the definition of good pair of order \(k\) is in \Cref{Definition of good pair}. 
Since \((X^f,(T_t^f)_{t\in\mathbb{R}},\mu^f)\) has scale property, and \(\nu\) is not the product measure, choose \(c(\nu,P,\h)\) as in \eqref{lower bound of R' sets}
and \(\e_2\) in \Cref{scale property} for \(\nu\) where \(P\) is the compact set in \eqref{The compact set}.
\begin{LEM}
\label{Small shear general case}
Suppose \(\alpha\in \mathcal{D}\) in \Cref{Definition of rotation angle} and \(f\) satisfies the condition in \Cref{Definition of roof}, let \(\nu\) be an ergodic self-joining of \((X^f,(T_t^f)_{t\in \mathbb{R}}, \mu^f)\) and \(\nu(\Omega_{\nu}
)=1\) with \(\Omega_{\nu}\) in \eqref{The support of joining}. Given \(0<\e<\min\{\frac{c(\nu,P,\h)}{100}, \e_2\}\), suppose that \(\nu( G_k)>\e\) for large enough \(n_k\), then \(\nu\) is 2-simple.  
\end{LEM}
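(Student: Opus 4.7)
The plan is to argue by contradiction: assume \(\nu\) is \(\h\)-to-one with \(\h\geq 2\) and produce the configuration forbidden by \Cref{Simpleness Criteria}. This closely mirrors the centralizer proof in \Cref{Small shear flow}, but instead of contradicting uniform continuity of a single map \(\phi\), the shear data supplied by \Cref{Small Shearing} will witness the hypothesis of \Cref{Simpleness Criteria} on a positive-measure set of base points. By \Cref{Difference of sets} and \Cref{Choice of sets} together with the finite extension theorem, \((X^f,(T_t^f),\mu^f)\) has the variation, scale, and finite extension properties, so \Cref{Simpleness Criteria} applies with \(K=P\) and our fixed \(\e\), yielding a Birkhoff time \(N\).

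First, I would pigeonhole over the \(\h\) measurable selections \(\psi_1,\dots,\psi_\h\) in \eqref{The support of joining} to locate an index \(i_0\) for which \(G_k^{(i_0)}\coloneqq\{((x,s_1),\psi_{i_0}(x,s_1))\in G_k\}\) has \(\nu\)-measure at least \(\e/\h\). Then I intersect \(G_k^{(i_0)}\) with a Lusin set \(\tilde U\) on which every \(\psi_j\) is uniformly continuous, with the Ratner set \(Z(\tfrac{\e}{100})^f\), the Denjoy set \(O(\tfrac{\e}{100})^f\), the singularity-avoiding sets \(E_{n_k}^f\cap E_{n_k+1}^f\) on both sides, the truncated space \(X_\e\), and a set \(\mathcal E\) of Birkhoff-generic points produced by \Cref{Uniform Birkhoff ergodic theorem}. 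For \(n_k\) exceeding the thresholds \(\N_0,\N_1,\N_2,\N_3,\N_4\) from the earlier sections as well as \(N\), this intersection \(\tilde B\) has positive \(\nu\)-measure; set \(B=\pi_1\tilde B\) and pick \(c_1\in (0,c(\nu,P,\h)]\) with \(\mu^f(B)\geq c_1\).

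Next, for each \(((x,s_1),(y,s_2))\in\tilde B\), \Cref{Small Shearing} provides \(\ell_0\), \(m\geq n_k\), and \(p=p(x,y)\in P\) with \(|f^{(\ell_0 q_m)}(x)-f^{(\ell_0 q_m)}(y)-p|\leq \tfrac{\e}{1000}\), and \Cref{Small shearing preserving} extends this along a density-\((1-\tfrac{\e}{5})\) subset \(\Lambda\subset [0, f^{([0.99\ell_0 q_m])}(x)]\), exactly as in the proof of \Cref{Small shear flow}. For \(t\in\Lambda\), let \((\tilde x_t,s_t^1)=T_{f^{(\ell_0 q_m)}(x_t)}^f(x_t,s_t^1)\); the shear estimate yields \(T_{f^{(\ell_0 q_m)}(x_t)}^f(y_t,s_t^2)=T^f_{p+O(\e/1000)}(\tilde y_t,s_t^2)\) with \(\tilde y_t=y_t+\ell_0 q_m\alpha\). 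Set-invariance of the graph \(\Omega_\nu\) under \(T_t^f\times T_t^f\) forces \(T^f_{p+O(\e)}(\tilde y_t,s_t^2)\in\Omega_{(\tilde x_t,s_t^1)}\), so some \(\psi_j(\tilde x_t,s_t^1)\) lies within \(\tfrac{\e}{1000}\) of \(T_p^f(\tilde y_t,s_t^2)\). Since \(d((x_t,s_t^1),(\tilde x_t,s_t^1))=\|\ell_0 q_m \alpha\|<\tau_0\) and both points sit in \(\tilde U\), uniform continuity of each \(\psi_j\) and of \(T_p^f\) on \(X_\e\) transfers this: there is \(k(t)\in\{1,\dots,\h\}\) with \(d(\psi_{k(t)}(x_t,s_t^1),T_p^f(y_t,s_t^2))\leq O(\e)\).

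Finally, \(\nu\)-invariance realizes \(\psi_{k(t)}(x_t,s_t^1)\) as \(T_t^f\,\psi_{k'(t)}(x,s_1)\) for some \(k'(t)\in\{1,\dots,\h\}\). Writing \(y_j\coloneqq\psi_j(x,s_1)\), \(y_*\coloneqq y_{i_0}=(y,s_2)\), and \(p''\coloneqq -p\in P\) (using symmetry of \(P\)), the previous bound rewrites as \(d(T_s^f y_*,\, T_{s+p''}^f y_{k'(s-p)})\leq \e\) for every \(s\) in a density-\((1-c_2)\) subset \(\mathcal I\) of a long interval \([M,M+L]\) obtained by shifting \([0, f^{([0.99\ell_0 q_m])}(x)]\), with \(L/M\) made arbitrarily large by taking \(n_k\) large and \(c_2\) shrunk to be much smaller than \(c(\nu,P,\h)\). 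This produces exactly the data \((y_*, p'', L, M, \mathcal I, c_1, c_2)\) forbidden by \Cref{Simpleness Criteria}, giving the desired contradiction and forcing \(\h=1\), so \(\nu\) is \(2\)-simple. The main technical hurdle is the fiber identification: in the centralizer proof the commutation \(\phi\circ T_t^f=T_t^f\circ\phi\) instantly produces the shifted point as a fiber, whereas here only the set-level equivariance \(\Omega_{T_t^f z}=T_t^f\Omega_z\) is available, and one must pass the \(\e/1000\)-approximation across the small base shift \(\|\ell_0 q_m\alpha\|\) while tracking the \(\h\) possible label permutations \(k(t)\mapsto k'(t)\) so that a single fixed \(y_*\) works for a uniform density of times.
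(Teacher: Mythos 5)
Your proposal is correct and follows essentially the same route as the paper's proof: reduce to \(\h\geq 2\), use \Cref{Small Shearing} and \Cref{Small shearing preserving} along a long orbit segment, identify \(T^f_{f^{(\ell_0 q_m)}(x_t)}\) of the distinguished fiber point with some \(\psi_j\) over the shifted base via the a.e.\ equivariance of \(\Omega_{\nu}\), transfer across the small shift \(\|\ell_0 q_m\alpha\|\) by Lusin uniform continuity, and feed the resulting density-\((1-O(\e))\) configuration (with \(p\in P\), using symmetry of \(P\)) into \Cref{Simpleness Criteria} with \(K=P\). The only bookkeeping caveat is that after pigeonholing an index \(i_0\) (which is not actually needed, since the criterion allows \(y_*\) to depend on \(x\)) you retain only measure \(\e/\h\), so the exceptional sets \(Z\), \(O\), the Lusin set, etc.\ must be taken at scale \(\e/(100\h)\) as in the paper, rather than \(\e/100\), to keep \(\nu(\tilde B)>0\) for large \(\h\).
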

\begin{proof}
By Lusin's theorem, given any  \(\e>0\), there is a subset \(\tilde{W}\subset X^f\) with \(\mu^f(\tilde{W})>1-\frac{\e}{200 \h}\), such that \(\psi_1,...,\psi_{\h}\) are uniformly continuous on \(\tilde{W}.\) Thus,  there exists \(\tau'\), such that if \((x,s_1), (y,s_2) \in \tilde{W}\) and \(\|x-y\|+|s_1-s_2|\leq \tau'\), then 
\[d(\psi_j(x,s_1),\psi_j(y,s_2)) \leq \delta(\tau')\]
for every \(1\leq j \leq {\h}.\)
There exists \(\tau_{\h}\), such that if \(\tau'<\tau_{\h}\), then \(\delta(\tau')\leq\delta\) where \(\delta\) is as in \eqref{Value of delta}. Denote \(\tilde{X}\coloneqq\{(x,s)\in X_{\frac{\e}{5\h}}: (y,s')\in X_{\frac{\e}{5\h}} \text{ for all } (y,s')\in\Omega(x,s)\}\) with \(X_{\frac{\e}{5\h}}\) as in \eqref{Set X_{e}}.
By measure disintegration of \(\nu\),
\[\frac{\e}{100\h}\geq\nu((X_{\frac{\e}{5\h}}\times X_{\frac{\e}{5\h}})^c)\geq \int_{\tilde{X}^c} \nu_{x}((X_{\frac{\e}{5\h}}\times X_{\frac{\e}{5\h}})^c) d\mu^f(x)\geq \mu^f(\tilde{X}^c) \frac{1}{\h}.\]
Define 
\begin{equation}
\label{Definition of Y}
 \mathcal{Y}\coloneqq \tilde{X}\cap \tilde{W},
\end{equation}
so we have \(\mu^f(\mathcal{Y})\geq 1-\frac{\e}{50}.\)
By Birkhoff ergodic theorem and \(\mu^f\) ergodicity of \(T^f,\) there exist a set \(\mathcal{W}(\e)\) with \(\mu^f(\mathcal{W}(\e))\geq 1- \frac{\e}{100}\) and \(T_{\h}(\e)>0,\) such that for every \(T'\geq T_{\h}(\e)\) and every \((x,s)\in \mathcal{W}(\e)\), we have
\[\left|\frac{1}{T'}\int_0^{T'} \mathds{1}_{\mathcal{Y}}\left(T_t^f(x,s)\right) d t\right|\geq 1-\frac{\e}{25 }.\]
If \(\nu\) is not 2-simple, this means \(\h\geq 2.\) 
Recall that \(Z(\e)\) is the set satisfying the SWR property in \eqref{Switchable Ratner set} and \( O(\e)\) is the set in \eqref{Uniform set O}.
Denote \[\tilde{B}\coloneqq G_k\cap \left(\left(Z(\frac{\e}{100\h})\cap O(\frac{\e}{100\h})\right)^f \times E_{n_k}^f\right)\]
and 
\(\pi_1:X^f\times X^f \rightarrow X^f\) to be map such that
\[\pi_1((x,s_1),(y,s_2))=(x,s_1).\]
Apply \Cref{Simpleness Criteria} to \(\pi_1(\tilde{B})\cap \mathcal{W}(\e
)\), \(c_1=\frac{\e}{2}, c_2=\e\) and \(K=P\) in \eqref{The compact set}. For any \(N>0,\) picking \(n_k\) large enough so that \(\frac{q_{n_k}}{a}\geq N\) and \(q_{n_k} \geq \max\{T_{\h}(\e)/2a,\frac{1}{\tau_0^3},\frac{1}{\delta^3},10^8\}\) where \(\tau_0\) is in \eqref{Value of delta} and \(a\) is the lower bound of the roof function. 
Furthermore, \[n_k\geq \max \{\N_0(\frac{\e}{100\h}),\N_1(\frac{\e}{100\h}),\N_2(\frac{\e}{100\h},\frac{\e}{1000\h}),\N_3(\frac{\e}{100\h}),\N_4(\frac{\e}{5\h})\}.\] 
By \Cref{Simpleness Criteria}, it suffices to show we can find a \(\kappa\) so that for every \((x,s)\in \pi_1(\tilde{B})\cap \mathcal{W}(\e)\), there are \(p\in P\), and \((y^{*},s^{*})\in \Omega(x,s)\) such that at least one of the following
\begin{equation}
\label{Forward y_*}
 \min_{1\leq j \leq {\h}}d(T_t^f(y^{*},s^{*}),T_{t+p}^f(y^i,s^i))\leq \e
\end{equation}
and
\begin{equation}
\label{Backward y_*}
  \min_{1\leq j \leq {\h}}d(T_{-t}^f(y^{*},s^{*}),T_{-t+p}^f(y^i,s^i))\leq \e
\end{equation}
holds for every \(t\in \mathcal{I},\) where \(\mathcal{I}\subset[M,M+L]\) with \(L\geq N\), \(L\geq \kappa M\) and \(|\mathcal{I}|\geq (1-\e)L .\)
Take \(((x,s),(y^{*},s^{*}))\in\tilde{B}\), by \Cref{Small Shearing}, there exist \(\ell_0\), \(m\geq n_k\) and \(p\in P\) depending on \(x,y^{*}\) such that 
\[|f^{(\ell_0 q_m)}(x)-f^{(\ell_0 q_m)}(y^{*})+p|\leq \frac{\e}{1000\h}.\]
If \(\ell_0>0,\) we prove \eqref{Forward y_*} is true. If \(\ell_0<0\), we prove \eqref{Backward y_*} is true. We present the proof for the case when \(\ell_0>0\). The case of \(\ell_0<0\) is similar. Consider the orbits \(\{T_t^f(x,s): t\in[0, f^{(2\ell_0 q_m)}(x)]\} \).
Since \((x,s)\in \mathcal{W}(\e),\) the set \(\Lambda=\{t\leq f^{(2\ell_0 q_m)}(x) : T_t^f(x,s) \in \mathcal{Y}\}\) has density at least \(1-\frac{\e}{10}\) where \(\mathcal{Y}\) is as in \eqref{Definition of Y}.
For each \(t\leq f^{([0.99\ell_0 q_m])}(x)\), let \(x_t=x+n(x,s,t)\alpha\) and \(y_t^{*}=y^{*}+n(y^{*},s^{*},t)\alpha.\) Also recall \(s_t\) and \(s_t^*\) as in \eqref{Coordinates of special flow}. From \Cref{Small shearing preserving}, for every \(0\leq n(x,s,t) \leq \ell_0 q_m\) and corresponding \(n(y^{*},s^{*},t)\), we have:
\begin{equation}
\label{Birkhoff sum difference between x_t and y_t*}
|f^{(\ell_0 q_m)}(x_t)-f^{(\ell_0 q_m)}(y_t^{*})+p|\leq \frac{\e}{1000\h}. 
\end{equation}
For each \(t \in \Lambda,\) and \(t\leq f^{([0.99\ell_0 q_m])}(x)\), consider \((\tilde{x}_t,\tilde{s}_{t})=T_{f^{(\ell_0 q_m)}(x_t)}^f(x_t,s_t).\) Similarly, there is a set \(\Lambda'\subset [0, f^{([0.99\ell_0 q_m])}(x)]\) with density larger than \(1-\e/2\) such that for every \(t\in \Lambda',\) both \((x_t,s_t)\) and \((\tilde{x}_t,\tilde{s}_t)\) are in \(\mathcal{Y}.\) Therefore, 
\begin{equation}
\label{Continuity of fiber elements}
d(\psi_j(x_t,s_t),\psi_j(\tilde{x}_t,\tilde{s}_t))\leq \delta  
\end{equation}
for \(t\in \Lambda'\) and every \(1\leq j \leq \h.\) By \eqref{Birkhoff sum difference between x_t and y_t*}, triangle inequality and our assumptions on \(n_k\), we have
\begin{equation}
\begin{aligned}
\label{triangle inequality for (y_t*,s_t*)}
    &d((y_t^{*},s_t^{*}), T_{f^{(\ell_0 q_m)}(x_t)+p}^f(y_t^{*},s_t^{*}))\\
\leq & d((y_t^{*},s_t^{*}), T_{f^{(\ell_0 q_m)}(y_t^{*})}^f(y_t^{*},s_t^{*}))+d(T_{f^{(\ell_0 q_m)}(x_t)+p}^f(y_t^{*},s_t^{*}),T_{f^{(\ell_0 q_m)}(y_t^{*})}^f(y_t^{*},s_t^{*}))\\
\leq & \delta+\frac{\e}{1000\h}.
\end{aligned} 
\end{equation}
However, for every \(t\in \Lambda'\), there exists \(1\leq i'\coloneqq i'(t) \leq {\h}\) such that 
\begin{equation}
\label{i' th fiber}
    \psi_{i'}(\tilde{x}_t,\tilde{s}_t)=T_{f^{(\ell_0q_m)}(x_t)}^f(y_t^{*},s_t^{*}).
\end{equation}
Combine \eqref{Continuity of nearby points within distance P}, \eqref{Continuity of fiber elements}, \eqref{triangle inequality for (y_t*,s_t*)} and \eqref{i' th fiber}, for \(t\in \Lambda''\subset [0, f^{([0.99\ell_0 q_m])}(x)]\) with density larger than \(1-\e,\) there exists \(1\leq i\coloneqq i(t) \leq {\h}\) such that \(T_t^f(y^i,s^i)=\psi_{i'}(x_t,s_t)\) and
\begin{equation}
\label{figure}
\begin{aligned}
&d(T_t^f(y^{*},s^{*}),T_{t+p}^f(y^i,s^i))\\
=&d((y_t^{*},s_t^{*}),T_{p}^f\psi_{i'}(x_t,s_t))\\
\leq & d((y_t^{*},s_t^{*}),T_{p}^f\psi_{i'}(x_t,s_t)) + d(T_{p}^f\psi_{i'}(x_t,s_t), T_{p}^f\psi_{i'}(\tilde{x}_t,\tilde{s}_t)) \\
\leq & \delta+\frac{\e}{1000\h}+ \frac{\e}{5000\h}\leq \e.\\
\end{aligned}  
\end{equation}
 Therefore, we choose \(M=0,\) \(L=f^{([0.99\ell_0 q_m])}(x)\) and \(\kappa=1\). Applying \Cref{Simpleness Criteria}, we obtain a contradiction if \(\h\geq 2\).
\end{proof}
\begin{figure}[H]
\begin{centering}
\begin{tikzpicture}[scale=1.4]
  % Axes
  \draw[->] (-0.5,0) -- (6,0) node[right] {};
  \draw[->] (0,-0.5) -- (0,4) node[above] {};

  % Origin
  \node at (-0.2,-0.2) {O};

  % Curve (convex shape)
  \draw[thick, domain=0.8:5.2, smooth, variable=\x] plot ({\x}, {0.4*(\x - 3)^2 + 1});

  % Points on x-axis
  \filldraw (1.3,0.5) circle (1pt) node[below] {$(\tilde{x}_t, \tilde{s}_t)$};
  \filldraw (2.2,0.5) circle (1pt) node[below] {$(x_t, s_t)$};

  % Point on curve
  \filldraw (5,1.96) circle (1pt) node[right] {$(y_t^*, s_t^*)$};

  % Vertical brace from (4.2,0.4) to (4.2,1.96)
  \draw[decorate,decoration={brace,amplitude=7pt},xshift=0pt]
        (5,1.96) -- (5,0.4) node[midway,xshift=8pt,right] {$p$};

  \draw[decorate,decoration={brace,amplitude=7pt},xshift=0pt]
        (4.3,0.5) -- (4.9,0.5) node[midway,yshift=4pt, above] {$\delta$};
  % Bottom of brace label
  \filldraw (4.3,0.5) circle (1pt) node[left] {$\psi_{i'}(x_t, s_t)$};
\filldraw (4.9,0.4) circle (1pt) node[below] {$\psi_{i'}(\tilde{x}_t,\tilde{s}_t)$};
\end{tikzpicture}    
\begin{center}
\caption{A figure about \eqref{figure}.}
\end{center}
\end{centering}
\end{figure}
From the above lemma, it suffices to consider \textit{Case 3} for ``most'' points in \(\Omega_{\nu}\). To state the idea of proof, we assume \({\h}=2\) first. Pick \((x,s)\) in a ``good'' set, then we map \((x,s)\) to \((x',s)=(x+q_{n}\alpha,s)\) where \(n\) is a large enough number. Consider the corresponding fiber points \((y,\tilde{s})\in \Omega(x,s)\) and \((y',s')\in \Omega(x',s)\). By running an argument in \Cref{Product Criteria}, we can obtain a distance estimate for \(y\) and \(y'.\) We will show if \(\|y-y'\|\) is not ``close'' to \(\|q_{n}\alpha\|\), then the conditions in \Cref{Product Criteria} are satisfied. Thus, the measure \(\nu\) is the product measure, which is a contradiction. To be more precise, for every \(\e\) and \(N\), we are able to find a long time interval \([M,M+L]\) with \(L \geq N\) and \(p_1,p_2 \in P\) with \(p_1\neq p_2\) such that for most times \(t\in [M,M+L]\) we have 
\[d(T_t^f (x,s), T_{t+p_1}^f(x',s)) < \e \text{  and  } d(T_t^f (y,\tilde{s}), T_{t+p_2}^f (y',s')) < \e.\]
\noindent Therefore, it suffices to consider the case where \(\|y-y'\|\) and \(\|x-x'\|\) are ``close'' enough. This is also the main difference between \(\h\geq 2\) and \(\h=1\). In the proof of \Cref{Theorem of centralizer}, the distance \(\|y-y'\|\) is much larger than \(\|x-x'\|\) because of the large shearing estimate \Cref{Large Shearing}.
The issue is that \(y\) and \(y'\) may not be on the same orbit if \(\h\geq 2.\)
However, if we apply the above methods again to the pair \(((x',s),(y',s'))\) in the good set. We can obtain a new pair \(((x'',s),(y'',s''))\) and \(\|y'-y''\|\) is also ``close'' to \(\|x'-x''\|=\|x-x'\|\). Since we assume \(\h=2,\) this implies \(y\) and \(y''\) must be in the same orbit, so we can use the large shearing result to obtain a contradiction because \(\|y-y''\|\) should be much larger than \(\|x-x''\|.\) For general case \(\h\), the idea is similar. However, it will complicate the argument and notations in the proof of \Cref{Main}.
\begin{LEM}
\label{Relative shear special flow}
For every \(\e<\min\{\e_1,\e_2, \frac{c(\nu,P,\h)}{100},\frac{\hat{\eta}}{1000}\}\), \(N>0\), there exist \(\kappa\coloneqq\kappa(\e)\) and \(\delta'\coloneqq \delta(\e,N)<\frac{\e^2}{1000}\) such that if for \(((x,s),(y,\s))\) and \(((x',s), (y',s')) \in \Omega_{\nu}\) with  \(x,y\in E_{\ell} \cap Z(\frac{\e}{10\h}) \cap O(\frac{\e}{10\h})\) and \(x'=x+q_{\ell}\alpha\) where \(\ell=n_k\) or \(n_k+1\) for some large enough \(n_k\) satisfying \[\frac{4}{3} \|x-x'\| \leq \|y-y'\|\leq \delta'\textit{\; or \; } \|y-y'\|\leq \frac{2}{3}\|x-x'\|\leq \delta', \] and \(|s'-\s|\leq \delta'\). Then, we can find \(M, L\) and \(p_1,p_2\) with \(p_1-p_2\in P\) and \(L\geq \kappa M\) such that at least one of the following cases is true:\\
Forward case:
\begin{equation}
\label{Forward case induction}
      d(T_t^f (x,s), T_{t+p_1}^f (x',s)) \leq \e \textit{\; and \;} 
  d(T_t^f (y,\tilde{s}), T_{t+p_2}^f (y',s')) \leq \e
\end{equation}
Backward case:
\begin{equation}
\label{Backward case induction}
    d(T_{-t}^f (x,s), T_{-t+p_1}^f (x',s)) \leq \e \textit{\; and \;} 
  d(T_{-t}^f (y,\tilde{s}), T_{-t+p_2}^f (y',s')) \leq \e
\end{equation}
for \(t\in U\subset [M,M+L]\) with \(|U|\geq (1-\e) L.\)
\end{LEM}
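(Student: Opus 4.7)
The plan is to run a Small-Shearing-style argument (compare \Cref{Small Shearing}) on the two pairs $(x,x')$ and $(y,y')$ simultaneously, at a common Birkhoff time $n$; the ratio hypothesis $r:=\|y-y'\|/\|q_\ell\alpha\|\geq 4/3$ or $r\leq 2/3$ then forces a genuine gap between the two shearing amounts, producing $p_1,p_2$ with $p_1-p_2\in P$ and at least one of $p_1,p_2\in P$.

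First, I would apply the derivative estimate \Cref{Denjoy Ostrowski} to the forward orbits of $x$ and $y$: since $x,y\in E_\ell$ with $\ell\in\{n_k,n_k+1\}$, these orbits stay $\tfrac{1}{q_\ell\log^{7/8}q_\ell}$ away from the singularity for length up to $4q_\ell\log^{1/2}q_\ell$, so the mean value theorem combined with \Cref{Denjoy Ostrowski} gives, for any $n$ in the common Denjoy-Ostrowski range,
\[
|f^{(n)}(x')-f^{(n)}(x)|=(1\pm\epsilon^2)(A_{-}-A_{+})\,n\log n\,\|q_\ell\alpha\|,
\]
and the analogous estimate with $\|y-y'\|$ replacing $\|q_\ell\alpha\|$. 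Define $p_1:=f^{(n)}(x')-f^{(n)}(x)$ and $p_2:=f^{(n)}(y')-f^{(n)}(y)$; then $|p_2|/|p_1|=r\,(1\pm 10\epsilon^2)$.

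Next, I would choose $n$ so that the larger of $|p_1|,|p_2|$ lands in a specific subinterval of $P$. In the case $r\geq 4/3$, pick $n$ with $|p_2|\in[H/2,H]$, so $p_2\in P$; then $|p_1-p_2|\geq|p_2|(1-1/r)\geq|p_2|/4\geq H/8$ and $|p_1-p_2|\leq|p_1|+|p_2|\leq 2|p_2|\leq 2H$, so $p_1-p_2\in P$. In the case $r\leq 2/3$ pick $n$ with $|p_1|\in[H/2,H]$ and argue symmetrically (now $p_1\in P$ and $p_1-p_2\in P$). Such an $n$ exists in the Denjoy-Ostrowski range $[\epsilon^4 q_{m_0},\tfrac{1}{16}q_{m_0+1}]$ for a suitable $m_0\leq n_k+1$, provided $n_k$ is large relative to $N/\epsilon$; this is secured by requiring $\delta'<\epsilon^2/1000$ and $n_k$ sufficiently large.

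Finally, I would transport these Birkhoff-sum estimates to orbit closeness, following the proof of \Cref{Small shear flow}. Take $M$ to be the flow time corresponding to $n$ returns of $(x,s)$ and set $L:=\kappa M$ for a small $\kappa=\kappa(\epsilon)$. For $t\in[M,M+L]$, let $i(t),j(t)$ be the return counts of the orbits of $(x,s)$ at time $t$ and of $(x',s)$ at time $t+p_1$; by \Cref{Sum Ostrowski} $|i(t)-j(t)|\leq 10n^{1/5}$, and for $t$ outside an $\epsilon$-density set of short intervals near returns (where the lower bound $a$ on $f$ is used) one has $j(t)=i(t)$. On this good set the fiber discrepancy equals $p_1-\bigl(f^{(i(t))}(x')-f^{(i(t))}(x)\bigr)$, which stays within $\epsilon$ by the derivative estimate applied to the return count $i(t)\approx n$, while the base discrepancy $\|q_\ell\alpha\|\leq 1/q_{n_k+1}$ is negligible. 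The identical argument for $(y,\tilde s),(y',s')$, using $|s'-\tilde s|\leq\delta'$ to align the initial fibers, yields the second inequality; choosing $n_k$ large enough ensures both $L\geq N$ and $L\geq\kappa M$. The main obstacle is producing a single $n$ that witnesses both closeness conditions at the same flow window: the shearing rates of the two pairs differ by the factor $r$, and the quantitative gap condition $r\geq 4/3$ or $r\leq 2/3$ (rather than merely $r\neq 1$) supplies exactly the slack needed to place both the appropriate shift and the difference $p_1-p_2$ into $P$ from a single choice of $n$.
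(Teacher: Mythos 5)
Your overall scheme is the same as the paper's: produce $p_1,p_2$ as the Birkhoff-sum differences of the two pairs at one common time, via the mean value theorem and the Denjoy--Koksma-type derivative estimates; use the ratio hypothesis to force $p_1-p_2\in P$ (and one of $p_1,p_2\in P$); propagate the estimates over a window by the cocycle identity; and convert to flow times after discarding an $\epsilon$-density set of times near the roof and floor. In the regime $\|y-y'\|\leq\frac{2}{3}\|x-x'\|$ this matches the paper's argument: the relevant scale is $\ell$ itself, and (for $\ell=n_k$) the increments of $n\mapsto f^{(n)}(x')-f^{(n)}(x)$ over blocks of length $q_\ell$ are of size $O\bigl((H+1)q_\ell\log q_\ell\,\|q_\ell\alpha\|\bigr)=O(1/\log\log q_\ell)$ by $(\mathcal{D}_3)$, so landing the shear in a fixed window inside $P$ is legitimate. (Even here you do not address $\ell=n_k+1$, where $q_{\ell+1}$ may be barely larger than $q_\ell$ and the block increments can be of order $\log q_\ell$; the paper needs the dichotomy ``$q_{\ell+1}\geq q_\ell\log q_\ell$ or replace $\ell$ by $n_k$''.)

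The genuine gap is in the case $\frac{4}{3}\|x-x'\|\leq\|y-y'\|\leq\delta'$. There $\|y-y'\|$ can be vastly larger than $\|q_\ell\alpha\|$ (anything up to the constant $\delta'$), and the time $n$ at which $|p_2|$ could lie in $[H/2,H]$ satisfies $n\log n\approx H/\bigl((A_{-}-A_{+})\|y-y'\|\bigr)$, which is typically far below $\epsilon^4 q_\ell$; so the two-sided estimate of \Cref{Denjoy Ostrowski} at scale $\ell$, which is the only thing your appeal to $x,y\in E_\ell$ provides, is simply not available at the times you need (below $\epsilon^4q_\ell$ the lemma only gives an upper bound). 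The correct scale is the $m_0$ adapted to $\|y-y'\|$, as in \Cref{Small Shearing}, and at that scale the required avoidance of the \emph{larger} neighborhood $[-\frac{1}{q_{m_0}\log^{7/8}q_{m_0}},\frac{1}{q_{m_0}\log^{7/8}q_{m_0}}]$ does not follow from $E_\ell$ (which only excludes the much smaller scale-$\ell$ neighborhood); it comes from $y\in Z(\cdot)$, and that protection is guaranteed in only one time direction. This is precisely why the lemma has a backward alternative and why the paper invokes $y\in Z(\frac{\epsilon}{10\mathfrak{h}})$ at this point; your proof claims forward shadowing unconditionally and never explains when or why one must switch to the backward case. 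Relatedly, your intermediate-value selection of an integer $n$ with $|p_2|\in[H/2,H]$ is asserted but not checked: at scale $\ell$ the single-step increments $|f(y'+n\alpha)-f(y+n\alpha)|$ can be of order $q_\ell\log^{7/8}q_\ell\cdot\|y-y'\|$, which grows with $q_\ell$ and can jump over the window; the increments become small only after passing to the adapted scale $m_0$, which again requires the $Z$-set rather than $E_\ell$. Finally, ensuring the chosen times (hence $M,L$) are at least $N$ in this regime is exactly what forces $\delta'$ to depend on $N$ (so that $q_{m_0}$ is large); it is not secured by $\delta'<\epsilon^2/1000$ together with ``$n_k$ large,'' since $m_0$ is determined by $\|y-y'\|$ and does not grow with $n_k$.
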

\begin{proof}
For \(\e'\leq \frac{\e}{1000}\) and \(N'\geq 2N\), we will show a Birkhoff sum difference argument for the forward orbit case; the proof of the backward orbit case is similar. Then, from the Birkhoff sum difference argument, we can conclude \eqref{Forward case induction}. Assume that \(\|y-y'\| \leq \frac{2}{3}\|x-x'\|\) and \(\ell=n_k\) first. By picking \(\ell\) large enough, since \(x\in E_{\ell}\cap Z(\frac{\e}{10\h})\cap O(\frac{\e}{10\h})\), we claim that we can find \(\kappa'\), \(M'\), \(L'\), \(p_1,p_2\) with \(p_1-p_2\in P\) such that
\begin{equation}
\label{p_1 estimate}
    \left|f^{(n)}(x')-f^{(n)}(x)-p_1\right|\leq \e',
\end{equation}
\begin{equation}
\label{p_2 estimate}
    \left|f^{(n)}(y')-f^{(n)}(y)-p_2\right|\leq \e',
\end{equation}
and
\begin{equation}
\label{relative estimate}
    \left|\left(f^{(m)}(y')-f^{(m)}(y)\right)-\left(f^{(n)}(y')-f^{(n)}(y)\right)\right|\leq \e'
\end{equation}
for every \(n\in[M',M'+L']\) with \(L'\geq \kappa' M'\) and \(|n-m|\leq M'^{1/2}.\) Since \(x,y\in E_{\ell},\) for \(m_0\in \{1,2,...,\frac{q_{\ell+1}}{16q_{\ell}}\}\), there exist \(x_{\ell}\) and \(y_{\ell}\) such that
\[|f^{(m_0q_{\ell})}(x)-f^{(m_0q_{\ell})}(x')|=|f'^{(m_0q_{\ell})}(x_{\ell})|\|x-x'\|\]
and
\[|f^{(m_0q_{\ell})}(y)-f^{(m_0q_{\ell})}(y')|=|f'^{(m_0q_{\ell})}(y_{\ell})|\|y-y'\|.\]
Apply the same estimate shown in \Cref{Small Shearing}, we know that 
\[m_0Hq_{\ell} \log(q_{\ell}) \leq |f'^{(m_0q_{\ell})}(x_{\ell})|\leq m_0(H+1)q_{\ell}\log(q_{\ell})\]
and
\[m_0Hq_{\ell} \log(q_{\ell}) \leq |f'^{(m_0q_{\ell})}(y_{\ell})|\leq m_0(H+1)q_{\ell}\log(q_{\ell}). \]
One can pick \(m_*\leq \frac{q_{\ell+1}}{16q_{\ell}}\) such that 
\[\frac{H}{32}\leq|f^{(m_*q_{\ell})}(x)-f^{(m_*q_{\ell})}(x')|\leq \frac{H+1}{16}.\]
Since \(\|y-y'\|\leq \frac{2}{3}\|x-x'\|\),
\[|f^{(m_*q_{\ell})}(y)-f^{(m_*q_{\ell})}(y')|\leq \frac{3}{4}|f^{(m_*q_{\ell})}(x)-f^{(m_*q_{\ell})}(x')|.\]
Hence, we can pick \(p_1\in P\) and \(p_2\) such that \(f^{(m_*q_{\ell})}(x)-f^{(m_*q_{\ell})}(x')=p
_1,\) \(f^{(m_*q_{\ell})}(y)-f^{(m_*q_{\ell})}(y')=p_2\) and \(p_1-p_2\in P.\) By taking \(M'=m_*q_{\ell},\) \eqref{p_1 estimate} and \eqref{p_2 estimate} follow from cocycle identity as in the proof of \cite[Proposition 4.5]{Multiple} part \(b\). \eqref{relative estimate} follows from the fact that \(x,y\in O(\frac{\e}{10\h}).\)
If \(\frac{4}{3} \|x-x'\| \leq \|y-y'\|\), since \(x\in E_{\ell}\) and \(y\in Z(\frac{\e}{10\h})\), the proof is similar as before.
If \(\ell=n_k+1\) and \(q_{\ell+1}\geq q_{\ell}\log(q_{\ell}),\) we can repeat the proof as before.
If \(\ell=n_k+1\) and \(q_{\ell+1}\leq q_{\ell}\log(q_{\ell})\), we can replace \(\ell\) with \(n_k\) instead; the rest of the proof is similar to before.
Recall \(n(x,s,t)\) and \(s_t\) in \eqref{Coordinates of special flow}.
Since \(|n(x,s,t+1)-n(x,s,t)|\leq \frac{1}{a},\) we can find \(M,L\) such that
\[n(x,s,M)\in [M'+1,M'+1+\frac{1}{a}]\]
and
\[n(x,s,M+L)\in [M'+L'-1,M'+L'-1-\frac{1}{a}].\]
Take \(\kappa(\e)=\frac{\kappa'}{100},\) we can obtain that
\(L\geq \kappa M\) using \Cref{Sum Ostrowski}.  \\
Let \[U(x,s)=\{t\in[M,M+L]: \frac{\e}{100}\leq s_t \leq f(x+n(x,s,t)\alpha)-\frac{\e}{100} \},\]
then we know \(|U(x,s)|\geq (1-\frac{\e}{3})L.\)
Take \(U=U(x,s)\cap U(y,\s)\), then \(|U|\geq(1-\e)L.\)
For \(t\in U\), by definition,
\[ f^{(n(x,s,t))}(x)+\frac{\e}{100}\leq s+t\leq f^{(n(x,s,t)+1)}(x)-\frac{\e}{100}.\]
But from \eqref{p_1 estimate}, on the one side
\begin{equation}
\begin{aligned}
      t+s+p_1&\leq t+s+\e'+f^{(n(x,s,t))}(x')-f^{(n(x,s,t))}(x)\\
      &\leq f(x+n(x,s,t)\alpha)-\frac{\e}{100}+f^{(n(x,s,t))}(x')+\e'\\
      &\leq f^{(n(x,s,t)+1)}(x')
\end{aligned}
\end{equation}
because \(x'=x+q_{\ell}\alpha\) and \(x\in E_{\ell}\cap Z(\frac{\e}{10\h}).\)
On the other side, we can obtain
\begin{equation}
\begin{aligned}
      t+s+p_1&\geq t+s-\e'+f^{(n(x,s,t))}(x')-f^{(n(x,s,t))}(x)\\
      &\geq \frac{\e}{100}+f^{(n(x,s,t))}(x')-\e'\\
      &\geq f^{(n(x,s,t))}(x').
\end{aligned}
\end{equation}
Hence, \[T_{t+p_1}^f(x',s)=(x'+n(x,s,t)\alpha,s+t+p_1-f^{(n(x,s,t))}(x'))\] and \[d(T_{t}^f(x,s),T_{t+p_1}^f(x',s))\leq \e.\]\\
By \eqref{relative estimate}, we can assume without loss of generality that \(n(y,\s,t)\in [M',M'+L']. \)
Then by \eqref{p_2 estimate}, we have:
\begin{equation}
\begin{aligned}
   t+s'+p_2&\leq t+\s+\e'+f^{(n(y,\s,t))}(y')-f^{(n(y,\s,t))}(y)+s'-\s\\
      &\leq f(y+n(y,\s,t)\alpha)-\frac{\e}{100}+f^{(n(y,\s,t))}(y')+\e'+\frac{\e^2}{1000}\\
      &\leq f^{(n(y,\s,t)+1)}(y'),
\end{aligned}
\end{equation}
and
\begin{equation}
\begin{aligned}
   t+s'+p_2&\geq t+\s-\e'+f^{(n(y,\s,t))}(y')-f^{(n(y,\s,t))}(y)+s'-\s\\
      &\geq \frac{\e}{100}+f^{(n(y,\s,t))}(y')+\e'-\frac{\e^2}{1000}\\
      &\geq f^{(n(y,\s,t))}(y').
\end{aligned}
\end{equation}
Therefore, \[T_{t+p_2}^f(y',s')=(y'+n(y,\s,t)\alpha,s'+t+p_2-f^{(n(y,\s,t))}(y'))\] and \[d(T_{t}^f(y,\s),T_{t+p_2}^f(y',s'))\leq \e.\] We complete the proof of this lemma.
\end{proof}
\begin{RMK}
When we want to prove the divergence of nearby orbits of special flow, we first prove the divergence of Birkhoff sum differences. The order of quantifiers here is important, and they are different for flow and Birkhoff sum cases. We first obtain \(\kappa'(\e')\), \(M'\) and \(L'\) from the Birkhoff sum estimate, then we obtain \(\kappa(\e)\), \(M\) and \(L\) for the special flow.
\end{RMK}
\begin{proof}[Proof of \Cref{Main}]
Suppose \(\alpha\in \mathcal{D}\) in \Cref{Definition of rotation angle} and \(f\) satisfies the condition in \Cref{Definition of roof}, let \(\nu\) be an ergodic self-joining of \((X^f,(T_t^f)_{t\in \mathbb{R}}, \mu^f)\) and \(\nu(\Omega_{\nu}
)=1\) with \(\Omega_{\nu}\) in \eqref{The support of joining}.
By \Cref{Difference of sets} and \Cref{Choice of sets}, the special flow system has variation and scale properties. Therefore, we can find \(\e_1, \hat{\eta}, \e_2\) as in \Cref{Property I} and in \Cref{scale property}. Recall the definition of \( c(\nu,P,\h)\) in \eqref{lower bound of R' sets}.
Fixed \(\e<\min\{\e_1,\e_2, \frac{c(\nu,P,\h)}{100},\frac{\hat{\eta}}{1000}\}\) and an arbitrary number \(N.\) From \(\e\) and \(N\), we can get \(\delta(\frac{\e}{50\h},N)\) and \(\kappa(\frac{\e}{50\h})\) by applying \Cref{Relative shear special flow} to \(\frac{\e}{50\h}\). By Lusin's theorem, there is a subset \(\tilde{W}\subset X^f\) with \(\mu^f(\tilde{W})>1-\frac{\e}{200 \h}\), such that \(\psi_1,...,\psi_{\h}\) are uniformly continuous on \(\tilde{W}.\) 
Recall \(\mathcal{Y}=X_{\frac{\e}{5\h}}\cap \tilde{W}\) in \eqref{Definition of Y}. 
By \Cref{Small shear general case}, suppose \(\nu\) is not 2-simple, then 
\begin{equation}
\label{n_k condistion 1 general case}
  \nu(G_k^c) \geq 1-\frac{\e}{500\h^2}  
\end{equation}
for large enough \(n_k.\)
Denote
\[\mathcal{Q}_{k}=\{(x,s)\in X^f: ((x,s),(y,s'))\in   G_k^c
 \textit{\;for every\;} (y,s')\in \Omega(x,s)\},\]
and \[\mathcal{U}_{k}=\{(x,s)\in X^f: x,y\in E_{n_k}\cap E_{n_k+1}\cap Z(\frac{\e}{500\h^2})\cap O(\frac{\e}{500\h^2}) \textit{\;for every\;} (y,s')\in \Omega(x,s)\}.\]
From the measure disintegration, 
\[\mu^f(\mathcal{Q}_{k})\geq 1-\frac{\e}{200 \h},\quad \mu^f(\mathcal{U}_{k})\geq 1-\frac{\e}{200\h}.\] 
Define \(\mathcal{Y}_k'=\mathcal{Y}\cap \mathcal{Q}_k \cap \mathcal{U}_k\). We have \(\mu^f(\mathcal{Y}_k')\geq 1-\frac{\e}{50\h}.\)
Recall \eqref{Type I distance} and \eqref{Type II distance}.
Pick \((x,s)\in \mathcal{Y}_k'\), by definition of \(\mathcal{Q}_k\), we can assume for every \(1\leq i \leq \h\), at least one of the following is true: 
\begin{equation}
\label{shorter scale orbit}
   \frac{1}{q_{n_k} \log q_{n_k}}<\min_{-q_{n_k}<j<q_{n_k}}\|x- R_{\alpha}^j y^i\|<\frac{5}{6 q_{n_k}},
\end{equation}
and
\begin{equation}
\label{larger scale orbit}
    \frac{1}{\r \log \r}<\min_{-q_{n_k}<j<q_{n_k}}\|x- R_{\alpha}^j y^i\|< \frac{5}{6 \r}.
\end{equation}
Recall \eqref{Coordinates of special flow}, for \((x,s)\in X^f\), \((y^i,s^i)\in \Omega(x,s)\) and \(t\in\mathbb{R}\), we let \(x_t=x+n(x,s,t)\alpha\) and \(y_t^i=y^i+n(y^i,s^i,t)\alpha\) for \(1\leq i \leq \h.\) If there exists \(1\leq i_* \leq \h\) such that \eqref{shorter scale orbit} holds, then for every \(1\leq\g,i \leq \h\), denote 
\[t_{\g}\coloneqq f^{(\g q_{n_k})}(x),\]
\[(x_{\g},s_{\g})\coloneqq(x_{t_{\g}},s_{t_{\g}}),\]
and
\[(y_{\g}^i,s_{\g}^i)\coloneqq(y_{t_{\g}}^i,s_{t_{\g}}^i).\] 
We also let \((x_0,s_0)\coloneqq (x,s)\) and \((y_0^i,s_0^i)\coloneqq(y^i,s^i)\). If \eqref{larger scale orbit} holds true for every \(1\leq i \leq \h\), then consider \[t_{\g}\coloneqq f^{(\g q_{n_{k+1}})}(x)\] instead.
By taking \(n_k\) large enough, suppose \((x_0,s_0)\) and \((x_1,s_1)\) are in \(\mathcal{Y}\), then for every \(1\leq i \leq \h\) there exists a number \(i'\) so that 
\begin{equation}
\label{n_k condition 2 general case}
   \|y_1^{i}-y_0^{i'}\|\leq \delta(\frac{\e}{50\h},N).  
\end{equation}
We say \((x,s)\) is \textit{k-comparable} if 
\[ \frac{2}{3}\|x_0-x_1\|<\|y_{1}^{i}-y_{0}^{i'}\|< \frac{4}{3}\|x_0-x_1\|\]
holds true for every \(1\leq i \leq \h.\)
Define \(S_{n}: X^f \rightarrow X^f\) to be a map such that
\[S_n(x,s)=(x+q_n\alpha,s).\]
By definition of \(\mathcal{U}_k\), \(x,y^{i}\in E_{n_k}\cap E_{n_k+1}\cap Z(\frac{\e}{500\h^2}) \cap O(\frac{\e}{500\h^2})\) for every \(1\leq i \leq \h\). We will pick \(n_k\) large enough so that \eqref{n_k condistion 1 general case} and \eqref{n_k condition 2 general case} hold simultaneously.
For such \(n_k,\) apply \Cref{Product Criteria} to \(\frac{\e}{50\h}\), the set \(W=\mathcal{Y}_k'\) and the map \(S=S_{n_k}\), \(S_{ n_{k}+1}\) and apply \Cref{Relative shear special flow} to \(((x_0,s_0),(y_0^{i'},s_0^{i'}))\) and \(((x_1,s_1),(y_1^{i},s_1^{i}))\), we deduce that the following set 
\begin{equation}
  \mathcal{Y}_k\coloneqq\{(x,s)\in \mathcal{Y}_k': (x,s) \textit{ is k-comparable}\}
\end{equation}
is of measure \(\mu^f(\mathcal{Y}_k)\geq 1- \frac{\e}{4\h}.\) Otherwise, we can find the set \(U=\mathcal{Y}_k^c\cap \mathcal{Y}_k'\) with \(\mu^f(U)\geq \frac{\e}{4\h}-\frac{\e}{50\h} \geq \frac{\e}{10\h}\) so that for \((x,s)\in U\), at least one of \eqref{Forward case induction} and \eqref{Backward case induction} is true. This contradicts the fact that \(\nu\) is not the product measure.
Now, pick \((x,s)\) in the following set
\[\mathcal{Y}_k \cap \left(\bigcap_{j=1}^{\h}S_{n_{k}}^{-j} \mathcal{Y}_k\right)\cap\left(\bigcap_{j=1}^{\h} S_{n_{k}+1}^{-j} \mathcal{Y}_k\right).\]
Suppose that there exists \(1\leq i_* \leq \h\) such that \eqref{shorter scale orbit} is true. Then, for any \(1\leq\g,i \leq \h\), there is a unique \(1\leq i'\leq \h\) such that
\begin{equation}
\label{Repeated k-comparable}
  \frac{2}{3\r}<\|y_{\g}^{i'}-y_{\g-1}^{i}\|< \frac{4}{3\r},  
\end{equation}
this is because \((x,s)\in \displaystyle \bigcap_{j=1}^{\h}S_{n_{k}}^{-j} \mathcal{Y}_k.\)
We claim that there are \(\g_1\) and \(\g_2\) such that
\begin{equation}
\label{Contradiction}
    \|y_{\g_1}^{i_*}-y_{\g_2}^{i_*}\| < \frac{4\h}{3 \r}.
\end{equation}
In fact, for \(\h+1\) points \(y_0^{i^*},y_1^{i^*},...,y_{\h}^{i*}\), they are all in neighborhoods of \(\h\) points \(y_0^1,y_0^2,...,y_0^{\h}.\) Hence, there are \(0\leq \g_1<\g_2 \leq \h\) such that \(y_{\g_1}^{i^*}\) and \(y_{\g_2}^{i^*}\) are in the same neighborhood. Applying \eqref{Repeated k-comparable} \(\g_2-\g_1\) times, then \eqref{Contradiction} follows.
However, on the other side, 
\[\frac{1}{2q_{n_k} \log q_{n_k}}<\min_{-2q_{n_k}<j<2q_{n_k}}\|x_{\g_1}- R_{\alpha}^j y_{\g_1}^{i_*}\|<\frac{5}{6 q_{n_k}}.\]
Using a similar proof in \Cref{Large Shearing}, there exist \(\mathfrak{d}_1^*,\mathfrak{d}_2^*>0\) such that
\[\mathfrak{d}_1^*\leq f^{((\g_2-\g_1)q_{n_k})}(x_{\g_1})-f^{((\g_2-\g_1)q_{n_k})}(y_{\g_1}^{i_*}) \leq \mathfrak{d}_2^*\h \log q_{n_k}.\]
Since the roof function is bounded below by \(a>0,\) there exists \(k^*\in \mathbb{N}\) such that
\[ q_{k^*-1} \leq \frac{\mathfrak{d}_2^* \h \log q_{n_k}}{a} \leq q_{k^*}. \]
Since \(q_{n+1} \leq q_{n}\log ^2q_{n}\) for every \(n,\) we have the following estimate:
\[\frac{1}{q_{k^*+1}} \geq \frac{1}{q_{{k}^*-1}\log ^4q_{{k}^*-1}} \geq \frac{a}{\mathfrak{d}_2^* \h \log^2 q_{n_k}}>\frac{100\h}{3 \r}.\]
Therefore,
\[\|y_{\g_1}^{i_*}-y_{\g_2}^{i_*}\| \geq \frac{1}{2q_{k^*+1}}> \frac{4\h}{3\r}.\]
This contradicts \eqref{Contradiction}. If \eqref{larger scale orbit} is true for every \(1\leq i \leq \h\), then set \(i_*=1.\) 
Then, for any \(1\leq\g,i \leq \h\), there is a unique \(1\leq i'\leq \h\) such that
\[\frac{1}{3q_{n_k+2}}<\|y_{\g}^{i'}-y_{\g-1}^{i}\|< \frac{4}{3q_{n_k+2}}.\]
Similarly, we can get
\[\|y_{\g_1}^{i_*}-y_{\g_2}^{i_*}\| < \frac{4\h}{3 q_{n_k+2}}\]
for some \(\g_1\) and \(\g_2.\)
The rest of the proof follows similarly. Hence, we complete the proof of \Cref{Main}.
\end{proof}

\bibliography{bib}
\bibliographystyle{alpha}
\printindex
\end{document}